\bfseries\color{blue},
\itshape\color{gray},
\tiny\color{gray}
\let\oldtocsection=\tocsection
\let\oldtocsubsection=\tocsubsection
\renewcommand{\tocsection}[2]{\hspace{0em}\oldtocsection{#1}{#2}}
\renewcommand{\tocsubsection}[2]{\hspace{1.8em}\oldtocsubsection{#1}{#2}}
\newtheorem{theorem}{Theorem}[section]
\newtheorem{proposition}[theorem]{Proposition}
\newtheorem{lemma}[theorem]{Lemma}
\theoremstyle{definition}
\newtheorem{definition}[theorem]{Definition}
\newtheorem{examples}[theorem]{Examples}
\theoremstyle{remark}
\newtheorem{remark}[theorem]{Remark}
\newtheorem{corollary}[theorem]{Corollary}
\newcommand{\di}{\mathop{}\!\mathrm{d}}
\newcommand{\df}{:=}
\newcommand{\setB}{\mathbb{B}}
\def\setR{\mathbb R}
\newcommand{\nd}{\noindent}
\newcommand{\norm}[1]{\left\lVert#1\right\rVert}
\newcommand{\cl}{\mathbb{R}}
\newcommand{\ddim}{\mathbb{R}^d}
\newcommand{\Om}{\Omega}
\def\cC{\mathcal C}
\def\cD{\mathcal D}
\def\cL{\mathcal L}
\def\cC{\mathcal C}
\begin{document}
\pagenumbering{arabic}

\title[Manifolds with kinks and graph Laplacian]{Manifolds with kinks and the asymptotic behavior of the graph Laplacian operator with Gaussian kernel}

\author{Susovan Pal}
\address{S.~Pal: Department of Mathematics and Data Science, Vrije Universiteit Brussel, Pleinlaan 2, B-1050 Elsene, Belgium.}
\email{susovan.pal@vub.be}

\author{David Tewodrose*}
\thanks{*Corresponding author.}
\address{D.~Tewodrose : Department of Mathematics and Data Science, Vrije Universiteit Brussel, Pleinlaan 2, B-1050 Elsene, Belgium.}
\email{david.tewodrose@vub.be}

\date{\today}

\maketitle

\begin{abstract}
    \nd We introduce manifolds with kinks, a class of manifolds with possibly singular boundary that notably contains manifolds with smooth boundary and corners. We derive the asymptotic behavior of the Graph Laplace operator with Gaussian kernel and its deterministic limit on these spaces as bandwidth goes to zero. We show that this asymptotic behavior is determined by the inward sector of the tangent space and, as special cases, we derive its behavior near interior and border points. Lastly, we validate our theoretical findings using numerical simulation.\\

    \smallskip
\noindent \textbf{Keywords.} Graph Laplacian, Manifolds with non-smooth boundary, Bouligand tangent cone, nonlinear dimensional reduction,  concentration estimates.

\smallskip
\noindent \textbf{MSC Numbers.} 58J32, 49J52, 68R10.

\end{abstract}

\tableofcontents

\section{Introduction}

The connection between the graph Laplacian and the Laplace–Beltrami operator is a central theme in geometric data analysis and has attracted substantial interest from both applied and theoretical communities. On the one hand, from a theoretical perspective, this relationship offers a discretization framework for studying differential operators on manifolds via point clouds or sampled data \cite{LiShiSun,HarlimSanzYang,YanJiangHarlim,JiaoYanHarlimLu}.
On the other hand, from an applied viewpoint, this connection underpins a wide array of algorithms in machine learning and signal processing, including manifold learning techniques such as Laplacian eigenmaps \cite{BelkinNiyogi2003,BelkinNiyogi2005} or Diffusion Maps \cite{CoifmanLafon2006, SingerWu}. Understanding the convergence of the graph Laplacian to the Laplace–Beltrami operator — notably the role of scaling, sampling density, and boundary behavior — remains crucial in ensuring that discrete approximations faithfully capture the geometry and analysis of the underlying continuous space. This paper aims at tackling this question on a class of smooth Riemannian manifolds having possibly singular boundary.\\

To describe our framework and main results, let us consider independent and identically distributed (i.i.d.) random variables  $\{X_i\}_{i \ge 1} \sim X$ with common law $\mathbb{P}_X$ on a metric measure space $(M,\di,\mu)$. For any $t>0$ and $n \in \mathbb{N}\backslash \{0\}$, define
$$
L_{n,t}f(x):=\frac{1}{nt^{d/2+1}}\sum_{i=1}^{n}e^{-\frac{\di(x,X_i)^2}{t}}(f(x)-f(X_i))
$$
for any $x \in M$ and $f$ in the space $\cC(M)$ of continous functions on $M$. Here $d$ is a positive integer. We call $L_{n,t}$ the intrinsic Graph Laplacian with $d$-dimensional Gaussian kernel associated with the family $\{X_1,\ldots,X_n\}$ and the parameter $t$. In case $M$ is a subset of some ambient Euclidean space $\setR^D$, then the intrinsic distance $\di$ might be replaced by the extrinsic Euclidean norm $\|\cdot\|$, in which case we call $L_{n,t}$ the extrinsic Graph Laplacian, which then writes as
$$
L_{n,t}f(x)=\frac{1}{nt^{d/2+1}}\sum_{i=1}^{n}e^{-\frac{\|x-X_i\|^2}{t}}(f(x)-f(X_i)).
$$

We define the (intrinsic/extrinsic) $d$-dimensional Gaussian operator at time $t$ as the expected value of $L_{n,t}$ with respect to $\mathbb{P}_X$, that is
$$L_tf(x):=\mathbb{E}_X\left[\frac{1}{t^{d/2+1}}e^{-\frac{\di(x,X)^2}{t}}(f(x)-f(X))\right]$$
for any $x \in X$ and $f \in \cC(M) \cap L^1(M,\mathbb{P}_X)$. It follows from the Strong Law of Large Numbers that
$$L_{n,t}f(x) \to L_tf(x) \qquad \text{almost surely as $n \to \infty$}$$
for any $x,f,t$ as above. Note that if $\mathbb{P}_X$ is absolutely continuous with respect to $\mu$ with Radon--Nikodym derivative $p \in L^1(M,\mu)$, then the Gaussian operator rewrites as
\[
L_t f(x) = \frac{1}{t^{d/2+1}} \int_M e^{-\frac{\di(x,y)^2}{t}} (f(x) - f(y)) p(y) \, d\mu(y).
\]
We classically say that $p$ is a density if it has unit $L^1$ norm.

After the work of various authors, e.g. \cite{Hein2005,BelkinNiyogi2005}, it is by now known that when $M$ is a smooth $d$-dimensional submanifold of an Euclidean space $\mathbb{R}^D$, for any sufficiently regular $f$ and $p$, the extrinsic Gaussian operator satisfies
\begin{equation}\label{eq:int}
    L_tf(x)\to -\frac{\pi^{d/2}}{4} \, \Delta_{M,p}f(x) \qquad \text{ as $t\to 0$}
\end{equation}
 at any interior point $x \in M$, where $\Delta_{M,p}$ is the weighted Laplace operator given by
\[
\Delta_{M,p}f(x) \df  p(x)\Delta_M{f}(x) + 2 \nabla{p}(x) \cdot \nabla{f}(x),
\]
with $\Delta_M$ the classical Laplace--Beltrami operator of $M$. In \cite{belkin+} (see also \cite{AnderssonAvelin}), this pointwise result for the extrinsic Gaussian operator was extended to three types of singular subsets of $\setR^D$, namely submanifolds with boundaries, intersections of them, and submanifolds with edge-type singularities. For instance, if $x$ belongs to the non-empty boundary of some smooth submanifold $M \subset \setR^D$, then 
\begin{equation}\label{eq:bound}
L_tf(x)  = -\frac{\pi^{(d-1)/2}}{2\sqrt{t}}p(x)\partial_\nu f(x)+ o\left(\frac{1}{\sqrt{t}}\right) \qquad \text{ as $t \downarrow 0$,}
\end{equation}
where $\partial_\nu f(x)$ is the inner normal derivative of $f$ at $x$. Note that the case of isolated interior singularities has been recently addressed in \cite{Pal}.

Our main results extend both \eqref{eq:int} and \eqref{eq:bound} in the setting of smooth Riemannian manifolds with kinks. These spaces, which we introduce in Section \ref{sec:manifolds}, might be understood as manifolds with possibly non-empty and irregular boundary. To distinguish these possibly non-smooth boundaries with classical smooth ones, we use the terminology \textit{border} : a smooth manifold with kinks $M$ thus divides into an interior part $int M$ and a border part $\partial M$. Manifolds with kinks notably include manifolds without boundary, manifolds with smooth boundary, and manifolds with corners as considered e.g.~in \cite{Cerf,Douady,DouadyHerault,Michor,Melrose,Joyce}. Other relevant spaces like cones, pyramids, even cusps, fall into the framework of manifolds with kinks while they are not manifolds with corners.

If $M$ is a $d$-dimensional manifold with kinks, then any $x \in M$ admits a tangent space $T_xM \simeq \setR^d$ defined as the usual space of smooth derivations at $x$. A key concept in our analysis is the notion of inward tangent cone $$I_xM \subset T_xM$$ which consists in all the directions emanating from $x$ and pointing towards the interior of $M$. In particular, $I_xM$ coincides with $T_xM$ if $x$ is an interior point, with a half-subspace of $T_xM$ if $x$ is a classical boundary point, and with a binary fraction of $T_xM$ if $x$ is a corner point.

We also introduce on manifolds with kinks the notions of $C^k$ functions for $k \in \mathbb{N}\cup\{\infty\}$, and tangent, cotangent and tensor bundles, see Section \ref{subsec:tangent} and \ref{subsec:metric}. We then define vector fields as the sections of the tangent bundle, differential forms as alternating sections of the cotangent bundle, Riemannian metrics as the sections of the symmetric, positive definite covariant 2-tensor bundle. The canonical Riemannian distance $\di$ and volume measure $\mathrm{vol}_g$ of a Riemannian metric $g$ are also introduced in a natural manner. Like for manifolds without/with boundary, we define the differential of order $k$ of a $\cC^k$ function as the suitable symmetric covariant $k$-tensor field which acts on contravariant $k$-tensor fields. Note that we shall use $Z^{(k)}$ to denote the contravariant $k$-tensor field $Z \otimes \ldots \otimes Z$ where $Z$ is a vector field. Any Riemannian metric $g$ on a manifold with kinks $M$ allows to specify the gradient and hessian of a $\cC^2$ function $f$, through the classical relations
\[
df(Z) = g(\nabla^g f,Z), \qquad d^{(2)}f(Z \otimes Z') = g([\mathrm{Hess}^g f] Z,Z').
\]
A Riemannian metric additionally identifies those tangent vectors at a point $x$ that have norm one, whose set we denote by $S^g_xM$. We define the inward tangent sphere
\[
S^gI_xM \df I_x M \cap S^g_xM
\]
which we endow with the $(d-1)$-dimensional Hausdorff measure $\sigma$ associated with the distance induced on $T_xM$ by the scalar product $g(x)$. Finally, we define the generalized normal vector
\[
v_g(x) \df  \int_{S^gI_xM} \theta \di \sigma(\theta).
\]

For the purpose of our analysis, we need minimal regularity on the boundary of the manifolds with kinks that we consider. Thus we work under Lipschitz and Continuously Directionally Differentiable (LCDD for short)
regularity, noting that this condition is satisfied by a large class of manifolds with kinks, e.g.~manifolds with boundaries and corners, and pyramids. We get refined results under Lipschitz and Twice Continuously Directionally Differentiable (LTCDD for short). Our approach also allows for cusps, which are not LCDD. We refer to Section \ref{sec:euclidean} for the precise definitions of L(T)CDD boundary points and cusps for local Euclidean models, and to \ref{sec:manifolds} for these definitions  in the manifold context.

For any $\ell \in \mathbb{N}$, set
    $$c_\ell = \frac{1}{2}\Gamma((\ell+1)/2)$$ where $\Gamma$ denotes the classical Gamma function. Our first main result concerning the intrinsic Gaussian operator writes as follows.

\begin{theorem}\label{th:1}
    Let $M$ be a smooth $d$-dimensional manifold with kinks endowed with a $\cC^2$ Riemannian metric $g$, and let $x \in M$ be either an interior point, an LCDD border point, or a cusp. Consider $\eta \in (0,1/2)$. Then the intrinsic Gaussian operator associated with a density $p \in \cC_{\ge 0}^2(M)$ satisfies
    \begin{align*}
    L_tf(x) & = -\frac{c_d}{\sqrt{t}} \bigg( p(x) \, \partial_{v_g(x)}f(x)  + \mathrm{Err}(t)\bigg) - c_{d+1} \bigg( p(x) A_gf(x) +  [p,f]_g(x) \bigg) + O(\sqrt{t}) + O(t^{-d}e^{-t^{2\eta-1}})
    \end{align*}
    as $t \downarrow 0$, for any $f \in \cC^3(M) \cap L^1(M,p\,\mathrm{vol}_g)$, $\mathrm{Err}(t) =o(1)$ as $t\downarrow 0$,
    and
    \begin{align*}
\partial_{v_g(x)} f(x) & \df \di_{x}f\left( v_g(x) \right),\\
        A_gf(x) & \df \frac{1}{2}\int_{S^gI_xM} \di_{x}^{(2)}f \left( \theta^{(2)} \right) \di \sigma(\theta),\\
        [p,f]_g(x) & \df \int_{S^gI_xM} \di_xf(\theta)\di_xp(\theta) \di \sigma(\theta).
    \end{align*}
    Moreover :

\begin{enumerate}
    \item If $x$ is an interior point, then as $t\downarrow 0$,
    \[
    L_tf(x) = -\frac{\pi^{d/2}}{4} \, \Delta_{M,p}f(x) + O(\sqrt{t}) + O(t^{-d}e^{-t^{2\eta-1}}).
    \]
    \item If $x$ is a $\cC^1$ boundary point, then as $t\downarrow 0$,
    \begin{equation}\label{eq:c1bound}
    L_tf(x) =  -\frac{\pi^{(d-1)/2}}{2\sqrt{t}}\bigg( p(x)\partial_\nu f(x)  + \mathrm{Err}(t)\bigg) - c_{d+1} \bigg( p(x) A_gf(x) +  [p,f]_g(x) \bigg) + O(\sqrt{t}) + O(t^{-d}e^{-t^{2\eta-1}})
    \end{equation}
    \item If $x$ is LTCDD, then for any small enough $t>0$,
    \begin{equation}\label{eq:refined}
    \mathrm{Err}(t) \le  \, \sqrt{t} \, C_d \, p(x) \, \|d_xf \|_{g_x}  SC(x) .
    \end{equation}
    where $C_d = 8 \Gamma(d+2)$ and $SC(x)$ is the total slicewise curvature of $\partial \Omega$ at $x$ defined in \eqref{eq:slicewise}. In particular, if the total slicewise curvature vector is zero, then $\mathrm{Err}(t) = 0$ for any small enough $t>0$.
\end{enumerate}
\end{theorem}

Our second main result deals with the extrinsic Gaussian operator. It writes in the context of submanifolds with kinks which we introduce in Section \ref{subsec:submanifolds}. In this setting, we can replace the inward tangent sphere $S^gI_x M$ with a suitable Bouligand tangent sphere, using the local chart around $x$ given by the orthogonal projection onto the translated tangent space $x + T_xM$. We refer to Definition \ref{def:open feasible direction cone} for the definition of Bouligand tangent sphere. For technical reasons, we obtain this result under an additional constraint on the power $\eta$ ; this might be lifted by means of a different approach.

\begin{theorem}\label{th:2}
  Let $M\subset \cl^D$ be a smooth $d$-dimensional submanifold with kinks endowed with the smooth Riemannian metric $g$ induced by the Euclidean scalar product of $\cl^D$, and $x\in M$ be either an interior point, an LCDD border point or a cusp. Let $\pi$ be the orthogonal projection of $\setR^D$ onto the translated tangent space $x + T_xM$. We identify the latter with $\setR^d$ and set $$\Omega \df \pi(\mathbb{B}^D_\epsilon(x) \cap M)$$ for $\epsilon>0$ sufficiently small. We let $S^B_{0_d}\Omega$ denote the Bouligand tangent sphere of $\Omega$ at $0_d$.
  
  Consider $\eta \in (1/6,1/2)$. Then the intrinsic Gaussian operator associated with a density $p \in \cC_{\ge 0}^2(M)$ satisfies
    \begin{align*}
    L_tf(x) & = \left[ -\frac{c_d}{\sqrt{t}} \left( \tilde{p}(0_d) \, \partial_{M}\tilde{f}(0_d)  + \mathrm{Err}(t)\right) - c_{d+1} \bigg( \tilde{p}(0_d) A_M\tilde{f}(0_d) +  [\tilde{p},\tilde{f}]_M(0_d) \bigg)\right](1+o(1))\\
    & \qquad \qquad \qquad \qquad \quad  \qquad\qquad \qquad \qquad \qquad \qquad \qquad    \quad + O(t^{1/6}) + O(t^{-d}e^{-t^{2\eta-1}})
    \end{align*}
    as $t \downarrow 0$, for any $f \in \cC^3(M) \cap L^1(M,p\,\mathrm{vol}_g)$, where $\tilde{f} \df f \circ \pi^{-1}$, $\tilde{p} \df p \circ \pi^{-1}$, and
    \[
    \mathrm{Err}(t) = o(1),
    \]
    \[
    \partial_{M} \tilde{f}(0_d) \df \di_{0_d}\tilde{f}\left( \int_{S^B_{0_d}\Omega} \theta \di \sigma(\theta) \right),
    \]
    \[
    A_M\tilde{f}(0_d) \df \frac{1}{2}\int_{S^B_{0_d}\Omega} \di_{x}^{(2)}\tilde{f} \left( \theta^{(2)} \right) \di \sigma(\theta), \qquad [\tilde{p},\tilde{f}]_M(0_d)  \df \int_{S^B_{0_d}\Omega} \di_x\tilde{f}(\theta)\di_x\tilde{p}(\theta) \di \sigma(\theta).
    \]
Moreover, the same conclusion as in Theorem \ref{th:1} is true when $x$ is an interior point or a LTCDD point.
\end{theorem}

It is worth pointing out that in the previous context, the inward tangent sector $I_xM$ coincides with the Bouligand tangent cone of $\Omega$ at $0_d$, so that Theorem \ref{th:2} matches up with Theorem \ref{th:1}. Let us also stress out that, while bounded geometry of the boundary was assumed in \cite{HAL2007}, we do not make any assumption of that kind here.

The recent preprint \cite{ART} provides a result that goes in the direction of Theorem \ref{th:2}, but with less details on the differential operators involved there. 

Let us also point out that, if $x$ is a corner of depth $k$, meaning that $M$ is locally modeled on
\[
\setR^d_k \df \setR^k_+ \times \setR^{d-k}
\]
in a neighborhood of $x$, then we obtain 
\begin{equation}\label{eq:Rdk}
I_xM \simeq \bigcap_{1 \le i \le k} \{ v \in T_xM : g(\partial_{i} f(x),v)\ge 0\} 
\end{equation}
where each $\partial_{i} f(x)$ corresponds to differentiation along a corresponding half-line direction of $\setR^n_k$, and $\di_x f (v_g(x)) = \sum_i \partial_{i} f (x)$.




Theorems \ref{th:1} and \ref{th:2} open the door to a natural Neumann boundary condition on (sub)manifolds with kinks, that would be $\partial_{v_g(x)} f(x)=0$ for any $x \in \partial M$. A natural problem to tackle after that is the spectral convergence of the Graph Laplacian with Gaussian kernel generated from finitely many samples $\{X_1,\ldots,X_n\}$ to the suitable Neumann Laplacian as sample size goes to infinity. We refer to \cite{BelkinNiyogi_conv,SingerWu2,TGHS} for related results on manifolds without and with boundary, see also \cite{DT} for a preliminary investigation with the symmetrized AMV kernel.\\

Our third main result tells us how to choose bandwidth $t$ as a function of the sample size $n$ so that the difference between the scaled graph Laplace operator $\sqrt{t} L_{n,t}$ and the first-order operator
\[
\mathcal{D}f(x) \df -c_d \, p(x) \, \partial_{v_g(x)}f(x) 
\]
identified in Theorem \ref{th:1} converges to zero in probability or almost surely. We refer to Definition \ref{def:alphasub} for the notion of $\alpha$-subexponentiality for a real-valued random variable, which encompasses boundedness, subexponentiality and subgaussianity.

\begin{theorem}\label{th:3}
Let $M$ be a smooth $d$-dimensional manifold with kinks endowed with a $\cC^2$ Riemannian metric $g$, and let $x \in M$ be either an interior point, an LCDD border point, or a cusp. Consider a random variable $X$ on $M$ with law having density $p \in \cC^2(M)$. Let $\{t_n\} \subset (0,+\infty)$ be such that $t_n \to 0$ as $n \to \infty$. For $f \in \cC^3(M)\cap L^1(M, p \, \mathrm{vol}_g)$, assume that $f(X)$ is $\alpha$-subexponential for some $\alpha \in (0,2]$.
\begin{enumerate}[label=(\roman*)]
    \item If \( \sqrt{n}\, t_n^{\frac{d+1}{2} } \to \infty \) as \( n \to \infty \), then
    \[
    \sqrt{t_n}L_{n, t_n} f(x) \xrightarrow{\mathbb{P}} \mathcal{D}f(x).
    \]

    \item If 
   $ \left(\sqrt{n}\, t_n^{\frac{d+1}{2}} \right)^{\alpha}/\ln(n)\to \infty$ as $n \to \infty$, then
    \[
    \sqrt{t_n} L_{n, t_n} f(x) \xrightarrow{\text{a.s.}} \mathcal{D}f(x).
    \]
\end{enumerate}
Moreover, if $\alpha \in (0,1]$, then $\sqrt{n}$ can be replaced by $n$ in the two previous asymptotic assumptions.
\end{theorem}

Of course, Theorem \ref{th:3} also holds when $M$ is a smooth submanifold with kinks of an Euclidean space $\setR^D$, in which case $\mathcal{D}$ must be replaced by the first-order operator obtained in Theorem \ref{th:2}.

Note the difference in the sufficient condition between \textit{(i) }and \textit{(ii)} above : the former doesn't depend on the tail decay rate $\alpha$ of $f(X),$ whereas the latter does.

We obtain Theorem \ref{th:3} in Section \ref{sec:concentration} through concentration estimates. Such estimates --- studied in e.g.~\cite{Hein2005,belkin+,AnderssonAvelin} --- quantify the error in approximating the Gaussian operator by the graph Laplace operator. Our results generalize this line of work in two key directions :

\begin{enumerate}[label=(\roman*)]
    \item We work in the setting of Riemannian manifolds with kinks, whose singularities can be significantly more severe than those of manifolds with boundary considered in \cite{belkin+}.
    \item Unlike most previous works, we do not assume that the function $f : M \to \mathbb{R}$ is bounded. In particular, our results hold on noncompact manifolds with finite total volume.
\end{enumerate}

\noindent
Regarding point (2), previous concentration bounds rely on Hoeffding's or Bernstein's inequalities for bounded random variables. In contrast, we employ more general forms of these inequalities that are applicable to subgaussian or subexponential random variables with appropriate tail decay. Let us mention that Theorem \ref{th:3} guided us to choose the bandwidth parameter $t>0$ correctly in the numerical experiments presented in Section \ref{sec:numerics}.

We conclude by pointing out that this paper deals with the so-called unnormalized Graph Laplacian only. The other two popular ones, namely normalized and random walk, can be investigated in a similar manner, but we leave this for future work.

\section{Euclidean models}\label{sec:euclidean}

Throughout the paper, we write $0_d$ for the origin of $\setR^d$, we let $\setB_r^d(x)$ stand for the Euclidean ball of radius $r$ centered at $x \in \setR^d$, and we write $\setB_r^d$ and $\setB^d$ instead of $\setB_r^d(0_d)$ and $\setB_1^d(0_d)$ respectively. We let $(e_1,\ldots,e_d)$ denote the canonical basis of $\setR^d$. For $y \in \setR^d$ we will often denote by $y'$ the $d-1$ first coordinates of $y$ in the canonical basis of $\setR^d$, and we let $y_d$ be the last one. We consider the open upper half space $\mathbb{H}^d:=\{y \in \setR^d : y_d > 0\}$ and the Euclidean unit sphere $\mathbb{S}^{d-1}\df \{y \in \setR^d : y_1^2 + \ldots + y_d^2 = 1\}$. We let $\sigma$ be the usual surface measure on $\mathbb{S}^{d-1}$ which coincides with the $(d-1)$-dimensional Hausdorff measure of the Euclidean distance. We also write $\mathcal{L}^d$ for the $d$-dimensional Lebesgue measure in $\mathbb{R}^d$.

For a subset $A$ of the Euclidean space  $\setR^d$, we let $\bar{A}$ be the closure of $A$ and $int(A)$ be its interior. We recall that a rigid motion of $\setR^d$ is a linear map $y \mapsto Ay+b$ where $A\in O(d)$ and $b\in \ddim$. The epigraph and strict epigraph of $\gamma : \setR^{d-1} \to \setR$ 
are defined, respectively, as
\[
epi(\gamma) \df \{ y \in \setR^d : y_d \ge \gamma(y')\} \quad \text{and} \quad \mathring{epi}(\gamma) \df \{ y \in \setR^d : y_d > \gamma(y')\}.
\]
When $\gamma$ is continuous, we obviously have
\begin{equation}\label{eq:epi=}
epi(\gamma) = \overline{\mathring{epi}(\gamma)}.
\end{equation}
The directional derivative of $\gamma$ at $x' \in \setR^{d-1}$ in the direction $v' \in \setR^{d-1}$ is defined as
\[
\gamma'(x';v') \df \lim_{t \downarrow 0} \frac{\gamma(x'+tv')-\gamma(x')}{t}
\]
whenever the limit exists. If this is the case, then the second directional derivative of $\gamma$ at $x'$ in the direction $v'$ is defined as
\[
\gamma''(x';v') \df \lim_{t \downarrow 0} \frac{\gamma(x'+tv')-\gamma(x')-t \gamma'(x';v')}{t^2/2}
\]
whenever it exists. Lastly, the characteristic function of a subset $A$ of a set $X$ will always be denoted by $1_A$.

\subsection{Boundary points}

For an open set $\Omega \subset \setR^d$ with a non-empty boundary $\partial \Omega$, we will call boundary points of $\Omega$ the elements in $\partial \Omega$. Inspired by \cite[p.626]{Evans}, we define the regularity of boundary points as follows. 

\begin{definition}\label{C0 boundary pt}
    Let $\Omega$ be an open subset of $\ddim$ with a non-empty boundary $\partial \Omega$, and $x \in \partial \Omega$. Consider $k \in \mathbb{N}\cup \{\infty\}$ and $\alpha \in (0,1]$.
    \begin{enumerate}[label=(\roman*)]
        \item We say that $x$ is a $\cC^k$ boundary point of $\Omega$ if there exist $\delta>0$ and $\gamma \in \cC^k(\setR^{d-1})$ such that, up to applying a rigid motion,
        \begin{equation}\label{eq:boundary_points}\tag{*}
        \begin{cases}
        \bar{\Omega} \cap  \setB_\delta^d(x)  = epi(\gamma)  \cap \setB_\delta^d(x), \nonumber\\
        \Omega \cap \setB_\delta^d(x)  = \mathring{epi}(\gamma)  \cap \setB_\delta^d(x).
        \end{cases}
        \end{equation}

    \item We say that $x$ is a $\cC^{k,\alpha}$ boundary point of $\Omega$ if it satisfies \eqref{eq:boundary_points} for some $\gamma \in \cC^{k,\alpha}(\setR^{d-1})$, up to a rigid motion.
    \item We say that $x$ is a Directionally Differentiable (DD for brevity) boundary point if it is a $\cC^0$ boundary point with a function $\gamma$ as above whose directional derivative $\gamma'(x';v')$ exists for any $v' \in \mathbb{R}^{d-1}$, where we identify $x$ with its image $(x',x_d)$ through the rigid motion from (1).
    \item We say that $x$ is a Continuously Directionally Differentiable (CDD for brevity) boundary point if it is a DD point with a function $\gamma$ as above whose directional derivative $\gamma'(x';v')$ exists for any $v' \in \mathbb{R}^{d-1}$ and is continuous with respect to $v'$. We additionally say that $x$ is TCDD if the second directional derivative $\gamma''(x';v')$ exists for any $v' \in \mathbb{R}^{d-1}$ and is continuous with respect to $v'$.
    \end{enumerate}
\end{definition}

As customary, we may write smooth instead of $\cC^\infty$, and Lipschitz instead of $\cC^{0,1}$. We will essentially work with boundary points that are $\cC^0$, $\cC^1$, Lipschitz, or CDD. The combination of Lipschitz and CDD (resp.~TCDD) shall be abbreviated to LCDD (resp.~LTCDD). 

\begin{remark} It should be noted that a boundary point may not be $\cC^0$. For instance, consider $\Om:=\{(x,y) \in (0,+\infty)\times \setR: y < \sin(1/x)\}.$ Then the boundary $\partial \Omega$ is the union of $\{0\} \times (-\infty,1]$ and $\{y = \sin(1/x)\}$, which cannot be written as the graph of a continuous function locally around $0_2$, see Figure \ref{Im1}.

\begin{figure}[!h] \centering
\includegraphics[scale=0.5]{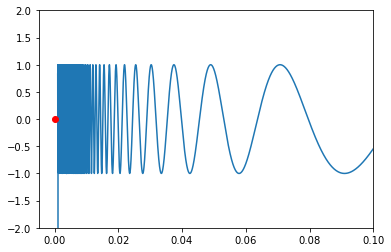}
\caption{$0_2$ is not a $\cC^0$ boundary point of $\Omega$}
\label{Im1}
\end{figure}
\end{remark}

\begin{remark}
The $\cC^0$ regularity of a boundary point is trivially preserved by local homeomorphisms. More precisely, for any $i \in \{1,2\}$, let $x_i$ be a boundary point of an open set $\Omega_i \subset \setR^d$ with a non-empty boundary. Assume that there exist open neighborhoods $V_1, V_2$ in $\setR^d$ of $x_1,x_2$ respectively, and a homeomorphism $\Phi : V_1 \cap \bar{\Omega}_1 \xrightarrow{\sim} V_2 \cap \bar{\Omega}_2$ such that $x_2 = \Phi(x_1)$. If $x_1$ is $\cC^0$, let $\gamma_1$ be the continuous map  such that \eqref{eq:boundary_points} holds around $x_1$ up to rigid motion. Then the composition of $\gamma_1$ with the homeomorphism $\Phi$ yields a continuous map $\gamma_2 \df \Phi \circ \gamma_1$ which ensures that $x_2$ is $\cC^0$.
\end{remark}

The next proposition shows that, from a topological point of view, there is no difference between a $\cC^0$ boundary point and a boundary point of the model upper half space $\mathbb{H}^d$.

\begin{proposition}\label{Small ball near the border are homeomorphic}

    Let $\Omega$ be an open subset of $\ddim$ admitting a $\cC^0$ boundary point $x \in \partial \Omega$. Then there exists $\delta>0$ such that $\bar{\Omega} \cap  \setB^d_\delta(x)$ is homeomorphic to $\bar{\mathbb{H}}^d\cap \setB^d_\delta.$ 
\end{proposition}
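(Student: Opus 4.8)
The plan is to reduce the statement to a purely local model. Fix $x \in \partial\Omega$ a $\cC^0$ boundary point. By Definition \ref{C0 boundary pt}(1), up to a rigid motion there are $\delta_0>0$ and $\gamma \in \cC^0(\setR^{d-1})$ with $\gamma(x')=x_d$ (after a further vertical translation we may assume $x = 0_d$ and $\gamma(0_{d-1})=0$) such that $\bar\Omega \cap \setB^d_{\delta_0} = \mathrm{epi}(\gamma) \cap \setB^d_{\delta_0}$. So it suffices to prove: for a continuous $\gamma$ with $\gamma(0_{d-1})=0$, there is $\delta>0$ such that $\mathrm{epi}(\gamma)\cap \setB^d_\delta$ is homeomorphic to $\bar{\mathbb{H}}^d \cap \setB^d_\delta$. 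The natural candidate homeomorphism is the ``vertical shear'' $\Psi : (y',y_d) \mapsto (y', y_d - \gamma(y'))$, which is a homeomorphism of $\setR^d$ (with continuous inverse $(y',y_d)\mapsto(y',y_d+\gamma(y'))$) mapping $\mathrm{epi}(\gamma)$ onto $\bar{\mathbb{H}}^d$ and $\mathring{\mathrm{epi}}(\gamma)$ onto $\mathbb{H}^d$.

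The difficulty is that $\Psi$ does not send the Euclidean ball $\setB^d_\delta$ to itself — it distorts it — so $\Psi(\mathrm{epi}(\gamma)\cap\setB^d_\delta)$ is $\bar{\mathbb{H}}^d$ intersected with some distorted neighborhood $\Psi(\setB^d_\delta)$ of $0_d$, not with a ball. I would handle this by a standard ``sandwiching'' argument: since $\gamma$ is continuous with $\gamma(0)=0$, for every $\varepsilon>0$ there is $r>0$ with $|\gamma(y')|<\varepsilon$ for $|y'|\le r$; consequently $\Psi$ and $\Psi^{-1}$ move points by a controlled amount near the origin, so one can find radii $0<\delta_1<\delta_2$ (with $\delta_2$ small enough to stay inside $\setB^d_{\delta_0}$) such that
\[
\bar{\mathbb{H}}^d\cap\setB^d_{\delta_1} \;\subset\; \Psi\bigl(\mathrm{epi}(\gamma)\cap\setB^d_{\delta_2}\bigr) \;\subset\; \bar{\mathbb{H}}^d\cap\setB^d_{\delta_0}.
\]
Then I would invoke the elementary fact that for any $0<a<b$ the sets $\bar{\mathbb{H}}^d\cap\setB^d_a$ and $\bar{\mathbb{H}}^d\cap\setB^d_b$ are homeomorphic via radial rescaling $y \mapsto (a/b)y$ — indeed all the sets $\bar{\mathbb{H}}^d\cap\setB^d_\rho$, $\rho>0$, are mutually homeomorphic (each is a topological half-ball).

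To conclude cleanly I would actually iterate the sandwiching once more to get, for a suitable small $\delta$, a chain
\[
\bar{\mathbb{H}}^d\cap\setB^d_{\delta_1}\subset \Psi\bigl(\mathrm{epi}(\gamma)\cap\setB^d_{\delta}\bigr)\subset \bar{\mathbb{H}}^d\cap\setB^d_{\delta_2}\subset \Psi\bigl(\mathrm{epi}(\gamma)\cap\setB^d_{\delta_3}\bigr)
\]
and apply a Schroeder--Bernstein-type / ``open annulus trapped between two homeomorphic copies'' argument; but in fact the simplest route is just to observe that $\Psi(\mathrm{epi}(\gamma)\cap\setB^d_\delta)$ is a compact subset of $\bar{\mathbb{H}}^d$ that is star-shaped with respect to $0_d$ in the vertical-slice sense and whose boundary-in-$\bar{\mathbb{H}}^d$ is a graph over the hyperplane $\{y_d=0\}$ of a continuous function vanishing at $0$; such a region is homeomorphic to the standard half-ball $\bar{\mathbb{H}}^d\cap\setB^d_\delta$ by a fiberwise radial reparametrization. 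Composing this homeomorphism with $\Psi$ restricted to $\mathrm{epi}(\gamma)\cap\setB^d_\delta$ and with the radial rescaling to match radii gives the desired homeomorphism $\bar\Omega\cap\setB^d_\delta \simeq \bar{\mathbb{H}}^d\cap\setB^d_\delta$.

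The main obstacle is purely the bookkeeping in the sandwiching step: one must choose $\delta$ small enough that (a) $\setB^d_\delta$ together with its $\Psi$-image stays inside $\setB^d_{\delta_0}$ so that \eqref{eq:boundary_points} applies, and (b) the distortion of $\Psi$ is small enough to trap $\Psi(\mathrm{epi}(\gamma)\cap\setB^d_\delta)$ between two half-balls. All the topology involved (half-balls are mutually homeomorphic, a graph region over a hyperplane is homeomorphic to a half-ball) is elementary and I would state it as a lemma and prove it by an explicit fiberwise formula rather than grinding through it in the main argument.
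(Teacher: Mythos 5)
Your opening diagnosis is exactly right, and it is worth stressing: the vertical shear $\Psi(y)=(y',y_d-\gamma(y'))$ maps $epi(\gamma)$ onto $\bar{\mathbb{H}}^d$, but it does not preserve the Euclidean ball, so it does not directly identify $\bar{\Omega}\cap\setB^d_\delta(x)$ with $\bar{\mathbb{H}}^d\cap\setB^d_\delta$. (This is precisely the gap in the paper's own one-line proof, which asserts that identification verbatim.) However, neither of your proposed repairs closes the gap. The ``sandwich plus Schroeder--Bernstein'' route is not available: there is no topological Schroeder--Bernstein theorem, and a set trapped between two half-balls need not be a half-ball. The ``fiberwise radial reparametrization'' route fails for a concrete reason: writing $\Psi\bigl(epi(\gamma)\cap\setB^d_\delta\bigr)=\{(y',s):0\le s<H(y')\}$ with $H(y')=\sqrt{\delta^2-|y'|^2}-\gamma(y')$, the base of this fibration is $\{y'\in\setB^{d-1}_\delta:H(y')>0\}$, which for a merely continuous $\gamma$ need not be a ball, need not be star-shaped, and need not even be connected; your rescaling formula $(y',s)\mapsto\bigl(y',\,s\sqrt{\delta^2-|y'|^2}/H(y')\bigr)$ requires $H>0$ on all of $\setB^{d-1}_\delta$, which already fails for $\gamma(y')=L|y'|$ near $|y'|=\delta$ (there the region is still a half-ball, but not by your formula) and fails irreparably in general.

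In fact no argument can succeed at this level of generality, because the statement is false for general $\cC^0$ boundary points. Take $d=2$ and let $\gamma$ be continuous, equal to $0$ outside thin disjoint tent-shaped spikes centered at $a_n=2^{-n}$ of height $h_n=2^{-n/2}$ and width $8^{-n}$; then $h_n\to 0$, so $\gamma$ is continuous with $\gamma(0)=0$, and $0_2$ is a $\cC^0$ boundary point of $\Omega:=\mathring{epi}(\gamma)$. For any $\delta<1/4$ pick $n$ with $a_n\in[\delta/4,\delta/2)$: then $h_n\ge\sqrt{\delta}/2>\delta$, so the vertical slice of $E:=epi(\gamma)\cap\setB^2_\delta$ over $s=a_n$ is empty, while the slices over $s=0$ and over some $s^*\in(a_n,a_{n-1})\subset(a_n,\delta)$ with $\gamma(s^*)=0$ are nonempty; hence $E$ is disconnected and cannot be homeomorphic to the (convex) half-disk. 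Placing spikes at all dyadic scales rules out every $\delta$. So the proposition must either strengthen the hypothesis on $\gamma$, or --- what suffices for its later use in the relative-homology argument --- replace the ball by a suitable relative neighborhood of $x$ in $\bar{\Omega}$: the bent half-cylinder $\{(y',y_d):|y'-x'|<\delta,\ \gamma(y')\le y_d<\gamma(y')+\delta\}$ is such a neighborhood, and on it the shear is an honest homeomorphism onto $\setB^{d-1}_\delta\times[0,\delta)$, which is homeomorphic to a half-ball. I recommend you (and the authors) restate the proposition in that form.
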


\begin{proof}
    Let $\delta$ and $\gamma$ be such that \eqref{eq:boundary_points} holds up to a rigid motion. Then $\Phi(y) \df  x + (y',\gamma(y')+y_{d})$ defines a homeomorphism $
    \bar{\mathbb{H}}^d\cap \setB^d_\delta \to epi(\gamma) \cap \setB^d_\delta(x)$ with inverse $\Phi^{-1}(\xi) \df (\xi', \xi_d-\gamma(\xi'))-x$. 
\end{proof}

\subsection{Diffeomorphisms and inward sectors}\label{subsec:diffeo_inward}

For $k$ a positive integer, we shall use the following notion of $\cC^k$ diffeomorphism between possibly non-open subsets of $\setR^d$, see e.g.~\cite[Section 1.5]{Melrose} for details.

\begin{definition}\label{Diffeomorphism of sets in ddim}
We say that a map $\varphi:A\to B$ between subsets $A,B\subset \ddim$ is a $\cC^k$ diffeomorphism if there exist open subsets $\tilde{A},\tilde{B} \subset \ddim$ and a $\cC^k$ diffeomorphism $\tilde{\varphi}:\tilde{A}\to \tilde{B}$ such that  such that $A \subset \tilde{A}$, $B \subset \tilde{B}$ and $\tilde{\varphi}|_A=\varphi.$
\end{definition}

We will mostly use this definition in the case where $A$ and $B$ are the closures of open subsets of $\setR^d$ with a non-empty boundary. In this regard, let us introduce a useful definition.

\begin{definition}
     Let $\Omega$ be an open subset of $\ddim$ with a non-empty boundary $\partial \Omega$. For any $x \in \bar{\Omega}$, we define the inward sector of $\Omega$ at $x$ as
     \[
     I_x\Omega \df \{c'(0) \in \setR^d : c \in \cC^1(I,\setR^d) \text{ such that $I = (-\epsilon,\epsilon)$ for some $\epsilon>0$, $c(0)=x$ and $c([0,\epsilon)) \subset \Omega$} \}.
     \]
\end{definition}

It is easily seen that the inward tangent sector at an interior point is $\setR^d$, and that it is a half space at a $\cC^1$ boundary point. Moreover, the inward tangent sector at the origin of $\setR^d_k$ is $\setR^d_k$ itself. 

Let us provide relevant properties of diffeomorphisms between closures of open sets.

\begin{lemma}\label{lem:prop_diffeo}
Let $ U, V\subset \ddim$ be open subsets and $\varphi:\bar{U}\to \bar{V}$ a $\cC^1$ diffeomorphism. Then $\varphi(U)=V$ and $\varphi(\partial U)=\partial V.$ Moreover, for any $x \in \partial U$, the differential $d_x\varphi : \setR^d \to \setR^d$ is a well-defined linear map which maps  $I_x U$ to $I_{\varphi(x)} V.$
\end{lemma}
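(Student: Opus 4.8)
The plan is to extract everything from the extension $\tilde\varphi : \tilde U \to \tilde V$ to an open-set diffeomorphism guaranteed by Definition \ref{Diffeomorphism of sets in ddim}. First I would record that $\tilde\varphi$ is a homeomorphism of open subsets of $\setR^d$ restricting to $\varphi$ on $\bar U$, and likewise $\tilde\varphi^{-1}$ restricts to $\varphi^{-1}$ on $\bar V$; in particular $\varphi$ is a homeomorphism $\bar U \to \bar V$. Since $\tilde\varphi$ is an open map and a homeomorphism, it sends the topological interior of $\bar U$ (taken in $\setR^d$) onto the topological interior of $\bar V$, and the topological boundary onto the topological boundary. The only subtlety is that ``$\mathrm{int}(\bar U)$'' need not equal $U$ for a general open set $U$; but here $U$ and $V$ are the open sets we started with, and to make the statement $\varphi(U)=V$, $\varphi(\partial U)=\partial V$ meaningful one reads $U = \mathrm{int}(\bar U)$, $\partial U = \bar U \setminus U$ in the sense used throughout the paper (open sets with non-empty boundary, where $\bar U = \overline{\mathring{\mathrm{epi}}(\gamma)}$ locally, hence $\mathrm{int}(\bar U) = U$ by \eqref{eq:epi=}-type reasoning). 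With that reading, $\varphi(U) = \varphi(\mathrm{int}(\bar U)) = \mathrm{int}(\varphi(\bar U)) = \mathrm{int}(\bar V) = V$, and consequently $\varphi(\partial U) = \varphi(\bar U \setminus U) = \bar V \setminus V = \partial V$.

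Next, for the differential: fix $x \in \partial U$. Because $\tilde\varphi$ is a $\cC^k$ diffeomorphism of open sets with $x \in \tilde U$, the differential $d_x\tilde\varphi : \setR^d \to \setR^d$ is a well-defined linear isomorphism, and since $\tilde\varphi|_{\bar U} = \varphi$, any notion of differential of $\varphi$ at $x$ agrees with $d_x\tilde\varphi$; this is what we call $d_x\varphi$. To see $d_x\varphi(I_xU) \subset I_{\varphi(x)}V$, take $v \in I_xU$, so there is a $\cC^1$ curve $c : (-\epsilon,\epsilon) \to \setR^d$ with $c(0)=x$, $c'(0)=v$, and $c([0,\epsilon)) \subset \bar U$. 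Then $\tilde c \df \tilde\varphi \circ c$ is a $\cC^1$ curve (composition of $\cC^1$ maps) with $\tilde c(0) = \varphi(x)$, $\tilde c'(0) = d_x\tilde\varphi(v) = d_x\varphi(v)$ by the chain rule, and $\tilde c([0,\epsilon)) = \varphi(c([0,\epsilon))) \subset \varphi(\bar U) = \bar V$. Hence $d_x\varphi(v) \in I_{\varphi(x)}V$. Applying the same argument to $\varphi^{-1}$ (which is also a diffeomorphism of closures, with differential $(d_x\varphi)^{-1}$ at $\varphi(x)$) gives the reverse inclusion $d_x\varphi(I_xU) \supseteq I_{\varphi(x)}V$, so equality holds.

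The proof is essentially bookkeeping, so there is no deep obstacle; the one point that needs care is the reconciliation of the abstract ``$\partial$'' and ``$\mathrm{int}$'' notation with the objects $U$, $V$ — i.e.\ making sure that the open sets under consideration satisfy $\mathrm{int}(\bar U) = U$ so that ``$\varphi(U)=V$'' is both the natural and the correct statement. This holds for the classes of open sets considered in the paper (those with $\cC^0$ or LCDD boundary points, locally strict epigraphs), by \eqref{eq:epi=}; if one wanted the lemma for completely general open sets the cleanest fix is simply to define $U$ and $\partial U$ via $\bar U$ from the outset. A minor secondary point is checking that the various candidate definitions of ``the differential of $\varphi$ at a non-interior point $x$'' coincide, which is immediate from the existence of the $\cC^k$ extension $\tilde\varphi$ and does not depend on the chosen extension, since two such extensions agree on $\bar U$ and hence have the same one-sided directional derivatives at $x \in \partial U$.
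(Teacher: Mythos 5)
Your proof follows essentially the same route as the paper's: use the extension $\tilde\varphi$ to identify interior with interior and boundary with boundary, define $d_x\varphi$ via $d_x\tilde\varphi$, and push curves forward by the chain rule to send $I_xU$ into $I_{\varphi(x)}V$. You are in fact more careful than the paper on two points: you flag the need for $\mathrm{int}(\bar U)=U$ (i.e.\ that $U$ is a regular open set, which does hold for the local epigraphs the paper works with), and you obtain the reverse inclusion by applying the argument to $\varphi^{-1}$, so that $d_x\varphi$ maps $I_xU$ \emph{onto} $I_{\varphi(x)}V$.

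The one place where your justification does not quite work is the independence of $d_x\varphi$ from the chosen extension. Two extensions $\tilde\varphi_1,\tilde\varphi_2$ agreeing on $\bar U$ are forced to have the same one-sided directional derivative at $x\in\partial U$ only in directions $v$ belonging to the feasible direction cone $F_x(\bar U)$, since one needs $x+tv\in\bar U$ for small $t>0$ to compute that derivative from the values of $\varphi$ alone. When this cone does not span $\setR^d$ --- which happens precisely at cusps, and cusps are admitted in the paper's framework --- this does not pin down the full linear map $d_x\tilde\varphi_i$. The paper's argument avoids the issue: $d\tilde\varphi_1$ and $d\tilde\varphi_2$ coincide on the open set $U$ (where $\tilde\varphi_1=\tilde\varphi_2$), both are continuous on a neighborhood of $\bar U$, and $U$ is dense in $\bar U$, hence $d_x\tilde\varphi_1=d_x\tilde\varphi_2$ at every $x\in\partial U$. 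This is a one-line repair; everything else in your argument stands.
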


\begin{proof}
 Let $\tilde{\varphi}$ be a $\cC^1$ diffeomorphism from an open set $\tilde{U} \supset \bar{U}$ onto another open set $\tilde{V} \supset \bar{V}$ extending $\varphi$. Then $\varphi(\partial U) = \tilde{\varphi}(\partial U) = \partial V$ where the first equality holds because $\tilde{\varphi}$ coincides with $\varphi$ on $\bar{U}$ and the second one is ensured by the continuity of $\tilde{\varphi}$ and $\tilde{\varphi}^{-1}$. Then $\varphi(U)= \tilde{\varphi}(\bar{U}\backslash \partial U) = \tilde{\varphi}(\bar{U})\backslash \tilde{\varphi}(\partial U)  = \tilde{\varphi}(\bar{V})\backslash \tilde{\varphi}(\partial V)  =\varphi(V)$.  For the well-posedness of $d_x\varphi$, note that any $\cC^1$ diffeomorphism $\tilde{\varphi} : \tilde{U} \to \tilde{V}$ extending $\varphi$ is $\cC^1$, hence the map $d\tilde{\varphi} : y \mapsto d_y\tilde{\varphi}$ is continuous on $\tilde{U}$ and coincides with $d\varphi$ on $U$; as a consequence, $d \varphi$ continuously extends to $\partial U$ in a unique way. The last point follows from the chain rule applied  to any map of the form $\tilde{\varphi} \circ c$ where $c$ defines an element $c'(0)$ of $I_x U$ and $\tilde{\varphi}$ is an extension of $\varphi$.
\end{proof}

\begin{remark}
    The first two equalities in the previous lemma do not hold when $\varphi$ is a diffeomorphism between $U$ and $V$ only. For instance, the open sets $\mathbb{B}^2$ and  $\mathbb{B}^2 \backslash ([0,1)\times \{0\})$ are biholomorphic via the Riemann mapping theorem, but their boundaries are not homeomorphic, see Figure \ref{Im8}. It should be pointed out that, in this case, the diffeomorphism given by the Riemann map does not extend to the boundary.

\begin{figure}[!h] \centering
\includegraphics[scale=0.4]{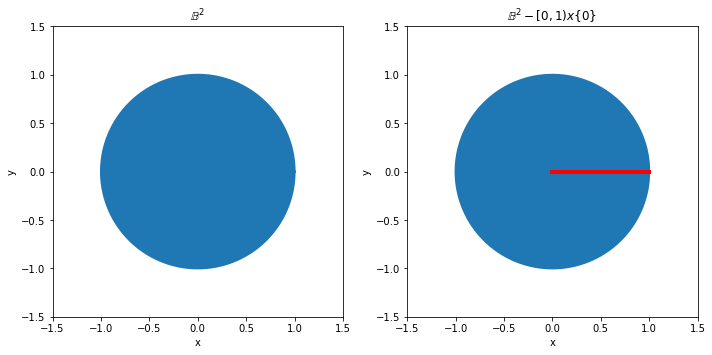}
\caption{Two diffeomorphic open sets whose boundaries are not diffeomorphic}
\label{Im8}
\end{figure}
    
\end{remark}

\subsection{Boundary cones of open Euclidean subsets}\label{sec:boundary_cones}

In this section, we provide classical notions from convex analysis, see e.g.~\cite{RW} for more details. We recall that $C\subset \ddim$ is called a cone if it is invariant under multiplication by a positive number, i.e.~if $\lambda v \in C$ for any $v\in C$ and $\lambda > 0.$ 

\begin{definition}\label{def:open feasible direction cone}
Consider $A \subset \ddim$ and $a \in A$.
\begin{enumerate}[label=(\roman*)]
    \item The Bouligand tangent cone of $A$ at $a$ is defined as $$T^B_aA:=\{0_d\}\cup \left\{v\in \ddim \backslash \{0_d\}: \text{there exists $\{a_n\}\subset A$ such that $ a_n \to a$  and }  \frac{a_n-a}{\norm{a_n-a}} \stackrel{}{\to} \frac{v}{\norm{v}} \right\}.$$
    \item The Bouligand tangent sphere of $A$ at $a$ is defined as $S^B_aA:=T^B_aA\cap \mathbb{S}^{d-1}.$
    \item The feasible direction cone of $A$ at $a$ is defined as $$F_a(A):=\{v\in \ddim: \text{there exists $t_v>0$ such that $a+tv \in A$ for any $0 \le t < t_v$}\}.$$
    
    \item The open feasible direction cone of $A$ at $a$ is defined as
    $$
    \tilde{F}_a(A):= \{v\in \ddim:   \text{there exists $t_v>0$ such that $a+tv \in int(A)$ for any $0 < t < t_v$}\}.
    $$
    \end{enumerate}
\end{definition}

    The Bouligand tangent cone is also known as tangent cone or contingent cone in the literature. It is a closed set. One can easily check that our definition is equivalent to the one given in \cite[Def.~6.1]{RW}. Note also that $$\tilde{F}_a(A) 
 \subset F_a(A) \subset T^B_aA$$ 
 with possibly strict inclusion : for instance, $\tilde{F}_{0_d}(\bar{\mathbb{H}}^d) =  \mathbb{H}^d \varsubsetneq \bar{\mathbb{H}}^d
 = F_{0_d}(\bar{\mathbb{H}}^d)$. However, if $x$ belongs to an open set $\Omega \subset \setR^d$, then
 \begin{equation}\label{eq:interiorF}
     \tilde{F}_x(\bar{\Omega}) 
= F_x(\bar{\Omega}) = T^B_x\bar{\Omega} = \setR^d.
 \end{equation}

As for boundary points, the following preliminary result holds.

\begin{lemma}\label{prop:open feasible direction cone non-empty}
    Let $x$ be a $\cC^0$ boundary point of an open set $\Omega \subset \setR^d$ with a non-empty boundary. Then $$\tilde{F}_x({\bar{\Omega}})\ne \emptyset.$$
\end{lemma}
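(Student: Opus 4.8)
The goal is to show that $\tilde{F}_x(\bar\Omega)$, the open feasible direction cone, is non-empty whenever $x$ is a $\cC^0$ boundary point of an open set $\Omega\subset\setR^d$. The plan is to exploit the local graph representation \eqref{eq:boundary_points} provided by the $\cC^0$ regularity and produce an explicit direction $v$ along which $x+tv$ enters $\mathring{epi}(\gamma)$, hence the interior of $\bar\Omega$, for all small $t>0$.

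First I would apply a rigid motion so that, near $x=(x',x_d)$, we have $\bar\Omega\cap\setB^d_\delta(x)=epi(\gamma)\cap\setB^d_\delta(x)$ and $\Omega\cap\setB^d_\delta(x)=\mathring{epi}(\gamma)\cap\setB^d_\delta(x)$ for some $\delta>0$ and some continuous $\gamma:\setR^{d-1}\to\setR$; since $x\in\partial\Omega$ lies in both sets, necessarily $x_d=\gamma(x')$. The natural candidate is the purely vertical direction $v\df e_d$. Then $x+tv=(x',x_d+t)$, and $(x+tv)_d = x_d+t = \gamma(x')+t > \gamma(x') = \gamma((x+tv)')$, so $x+tv\in\mathring{epi}(\gamma)$ for every $t>0$. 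For $t$ small enough, say $0<t<\delta/2$, we also have $x+tv\in\setB^d_\delta(x)$, hence $x+tv\in\mathring{epi}(\gamma)\cap\setB^d_\delta(x)=\Omega\cap\setB^d_\delta(x)\subset int(\bar\Omega)$ (using that $\Omega$ is open and $\Omega\subset int(\bar\Omega)$). Therefore $e_d\in\tilde{F}_x(\bar\Omega)$ in the rotated coordinates, and pulling back by the rigid motion gives a non-zero vector in $\tilde{F}_x(\bar\Omega)$ in the original coordinates, so the cone is non-empty.

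The only subtlety worth spelling out is the relationship between $int(\bar\Omega)$ and $\Omega$: one needs $\Omega\subset int(\bar\Omega)$, which is immediate since $\Omega$ is open and contained in $\bar\Omega$, so that a point of $\Omega\cap\setB^d_\delta(x)$ is genuinely an interior point of $\bar\Omega$ as required by the definition of $\tilde{F}_x$. One should also note that a rigid motion $R:y\mapsto Ay+b$ with $A\in O(d)$ maps feasible directions to feasible directions: if $x+tv\in int(\bar\Omega)$ for small $t$, then $R(x)+tAv = R(x+tv)\in int(R(\bar\Omega)) = int(\overline{R(\Omega)})$, because $R$ is a homeomorphism of $\setR^d$; this legitimizes working in the rotated frame. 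There is essentially no hard part here — the statement is a direct consequence of the graph structure — but the one point requiring care is to make sure the chosen direction lands in the \emph{strict} epigraph (the interior) and not merely in $epi(\gamma)$, which is exactly why the vertical direction $e_d$, rather than a tangential one, is the right choice.
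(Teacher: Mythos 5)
Your proof is correct and follows essentially the same route as the paper: use the local graph representation to see that the vertical direction $e_d$ satisfies $x+te_d\in\mathring{epi}(\gamma)\cap\setB^d_\delta(x)\subset\Omega\subset int(\bar\Omega)$ for all small $t>0$. The extra remarks about $\Omega\subset int(\bar\Omega)$ and the rigid motion being a homeomorphism are sound and merely make explicit what the paper leaves implicit.
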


\begin{proof}
    Let $\delta$ and $\gamma$ be such that \eqref{eq:boundary_points} holds up to a rigid motion. Since $x \in \partial \Omega$, we have $x_d = \gamma(x')$ so that $x_d + t > \gamma(x')$ for any $t \in (0,\delta)$. Thus $x+te_d \in \Omega \cap \setB_\delta(x) \subset \Omega$ for any such a $t$, so that $e_d\in \tilde{F}_x(\bar{\Omega})$.
\end{proof}

For boundary points with additional regularity, the Bouligand tangent cone satisfies the following properties.

\begin{proposition}\label{lem:tangent cones as epigraphs}
    Let $x$ be a LCDD boundary point of an open set $\Omega \subset \setR^d$ with a non-empty boundary, and let $\gamma$ be such that \eqref{eq:boundary_points} holds up to a rigid motion. We identify $x$ with its image $(x',x_d)$ through this rigid motion. Then 
    \begin{equation}\label{eq:epi}
    T^B_x{\bar{\Omega}}=epi(\gamma'(x';\cdot)).
    \end{equation} 
    Moreover, \begin{equation}\label{eq:epi2}  T^B_x{\bar{\Omega}} = \overline{\tilde{F}_x(\bar{\Omega})}, \qquad int(T^B_x{\bar{\Omega}})=int({ \tilde{F}_x(\bar{\Omega})  }), \qquad 
\partial(T^B_x{\bar{\Omega}})=\partial({ \tilde{F}_x(\bar{\Omega})  }).\end{equation}
Lastly,
\begin{equation}\label{eq:epi3}
    \mathcal{L}^d(\partial(T^B_x{\bar{\Omega}})) = 0.
\end{equation}
\end{proposition}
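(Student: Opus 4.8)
The plan is to express all three claims in terms of the epigraph of the single function $g \df \gamma'(x';\cdot) : \setR^{d-1} \to \setR$. Up to the rigid motion we may assume $x = (x',x_d)$ with $x_d = \gamma(x')$. Since inside $\setB_\delta^d(x)$ the set $\bar{\Omega}$ coincides with $epi(\gamma)$ and $int(\bar{\Omega})$ coincides with $\mathring{epi}(\gamma)$ (the latter because $int(epi(\gamma)) = \mathring{epi}(\gamma)$ for continuous $\gamma$), both $T^B_x\bar{\Omega}$ and $\tilde{F}_x(\bar{\Omega})$ are unchanged if $\bar{\Omega}$ is replaced by $epi(\gamma)$, as these objects only see points arbitrarily close to $x$. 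Next I would record the properties of $g$: it is positively $1$-homogeneous (immediate from the definition of the directional derivative), continuous (because $x$ is CDD), and Lipschitz (with the same constant $L$ as $\gamma$, by passing the Lipschitz inequality for $\gamma$ to the limit defining $g$). Consequently $epi(g)$ is a closed cone, $\overline{\mathring{epi}(g)} = epi(g)$ by \eqref{eq:epi=}, $int(epi(g)) = \mathring{epi}(g)$, and $\partial(epi(g))$ is the graph $\{(v',g(v')) : v' \in \setR^{d-1}\}$ of $g$.

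For \eqref{eq:epi} I would prove the two inclusions separately. For $epi(g) \subseteq T^B_x\bar{\Omega}$: given $v = (v',v_d)$ with $v_d > g(v')$, the points $a_n \df x + \tfrac1n v$ lie in $epi(\gamma)$ for $n$ large, because $n[\gamma(x'+v'/n) - \gamma(x')] \to g(v') < v_d$, while $(a_n - x)/\norm{a_n - x} = v/\norm{v}$ for every $n$; hence $v \in T^B_x\bar{\Omega}$, so $\mathring{epi}(g) \subseteq T^B_x\bar{\Omega}$, and since the Bouligand cone is closed, $epi(g) = \overline{\mathring{epi}(g)} \subseteq T^B_x\bar{\Omega}$. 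For the reverse inclusion $T^B_x\bar{\Omega} \subseteq epi(g)$: take $v \ne 0$ in $T^B_x\bar{\Omega}$ witnessed by $a_n \in epi(\gamma)$ with $a_n \to x$ and $(a_n - x)/t_n \to \hat{v} \df v/\norm{v}$, where $t_n \df \norm{a_n - x} \to 0$; writing $a_n' = x' + t_n u_n$ with $u_n \to \hat{v}'$, the inclusion $a_n \in epi(\gamma)$ reads $(a_n)_d - x_d \ge \gamma(a_n') - \gamma(x')$, and after dividing by $t_n$ and splitting $\gamma(x'+t_nu_n) - \gamma(x') = [\gamma(x'+t_nu_n) - \gamma(x'+t_n\hat{v}')] + [\gamma(x'+t_n\hat{v}') - \gamma(x')]$, the Lipschitz bound controls the first bracket by $L t_n\norm{u_n - \hat{v}'} = o(t_n)$ while the second, divided by $t_n$, tends to $g(\hat{v}')$; passing to the limit gives $\hat{v}_d \ge g(\hat{v}')$, i.e.\ $v \in epi(g)$. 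This second inclusion is the technical heart of the proposition, and it is exactly where Lipschitz regularity of $\gamma$ (rather than mere continuity) is used; I expect it to be the main obstacle.

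Claim \eqref{eq:epi2} then follows by sandwiching the open feasible cone. From the definitions one checks directly that $\mathring{epi}(g) \subseteq \tilde{F}_x(\bar{\Omega}) \subseteq epi(g)$: if $v_d > g(v')$ then $[\gamma(x'+tv') - \gamma(x')]/t \to g(v') < v_d$, so $x + tv \in \mathring{epi}(\gamma) = int(\bar{\Omega})$ near $x$ for all small $t > 0$; conversely, if $x + tv \in int(\bar{\Omega}) = \mathring{epi}(\gamma)$ for $0 < t < t_v$, then $v_d > [\gamma(x'+tv') - \gamma(x')]/t$, and letting $t \downarrow 0$ gives $v_d \ge g(v')$. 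Taking closures in this chain and using $\overline{\mathring{epi}(g)} = epi(g) = T^B_x\bar{\Omega}$ yields $\overline{\tilde{F}_x(\bar{\Omega})} = T^B_x\bar{\Omega}$; taking interiors and using $int(\mathring{epi}(g)) = \mathring{epi}(g) = int(epi(g)) = int(T^B_x\bar{\Omega})$ yields $int(\tilde{F}_x(\bar{\Omega})) = int(T^B_x\bar{\Omega})$; and since $\partial A = \bar{A} \setminus int(A)$ in general, the boundary identity follows.

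Finally, for \eqref{eq:epi3}, the previous steps identify $\partial(T^B_x\bar{\Omega})$ with the graph of the continuous function $g : \setR^{d-1} \to \setR$. Such a graph is closed, hence Borel, and its vertical fibre over each $v' \in \setR^{d-1}$ is the single point $\{g(v')\}$; Tonelli's theorem then gives $\mathcal{L}^d(\partial(T^B_x\bar{\Omega})) = \int_{\setR^{d-1}} \mathcal{L}^1(\{g(v')\})\,\di v' = 0$ (alternatively, cover the graph over each bounded box by finitely many slabs whose heights are controlled by the modulus of continuity of $g$). I expect this last step to present no difficulty; the whole weight of the proposition rests on the inclusion $T^B_x\bar{\Omega} \subseteq epi(\gamma'(x';\cdot))$ in \eqref{eq:epi}.
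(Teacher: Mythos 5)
Your proof is correct and follows essentially the same route as the paper: the directional-derivative definition gives $\mathring{epi}(g)\subseteq T^B_x\bar\Omega$, the Lipschitz bound on $\gamma$ gives the reverse inclusion, the epigraph characterization of $\tilde F_x(\bar\Omega)$ yields \eqref{eq:epi2}, and \eqref{eq:epi3} follows from the graph of a continuous function being Lebesgue-null. Your write-up is in places slightly tidier (you make explicit the closure step $epi(g)=\overline{\mathring{epi}(g)}$ needed to pass from the strict to the full epigraph, and you obtain the interior identity by sandwiching $\mathring{epi}(g)\subseteq\tilde F_x(\bar\Omega)\subseteq epi(g)$ rather than by a separate ball argument), but these are cosmetic, not substantive, differences.
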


\begin{proof}
    With no loss of generality, we assume that $x =0_d$. Set $g(v')\df \gamma'(0_{d-1},v')$ for any $v' \in \setR^{d-1}$.
    
    Consider $v \in epi(g)$. Then $g(v')<v_d$. For any $\epsilon_n \downarrow 0$,
    \begin{align*}
    \gamma(\epsilon_n v') = g(\epsilon_n v') + o(\epsilon_n \|v'\|) = \epsilon_n g(v') + o(\epsilon_n \|v'\|) & \le \epsilon_n v_d + o(\epsilon_n \|v'\|)\\
    & =\epsilon_n(v_d + o(\|v'\|)) ) =: \tau_n,
    \end{align*}
    so that each $x_n \df  (\epsilon_n v', \tau_n)$ belongs to $epi(g)$ and the sequence $(x_n)$ satisfies $x_n \to 0_d$ with $$\frac{x_n}{\|x_n\|} = \frac{(v',v_d+o(\|v'\|))}{\|(v',v_d+o(\|v'\|))\|} \to \frac{v}{\|v\|} \, \cdot $$
    Thus $v \in T_{0_d}^B\bar{\Omega}$. This shows the reverse inclusion in \eqref{eq:epi}.

    Consider $v\in T^B_{0_d}{\bar{\Omega}}$ and $\{x_n\} \subset \bar{\Omega}$ such that $x_n \to 0_d$ and $x_n/\|x_n\| \to v/\|v\|$. By \eqref{eq:boundary_points}, we can assume that $\{x_n\} \subset epi(\gamma)$. Set $\epsilon_n \df \|x_n\|/\|v\|$ for any $n$. Then
    \[
    \gamma(\epsilon_n v') = g(\epsilon_n v') + o(\epsilon_n)
    \]
    by definition of $g$. Moreover, since $\gamma$ is Lipschitz, there exists $L \ge 1$ such that
    \[
    |\gamma(x_n') - \gamma(\epsilon_n v')| \le L \|x_n' - \epsilon_n v'\| = L \left|\frac{x_n'\|v\|}{\|x_n\|} - v'\right| \epsilon_n = o(\epsilon_n).
    \]
    As a consequence,
    \[
    \delta_n \df |g(\epsilon_n v') - \gamma(x_n')| = o(\epsilon_n).
    \]
    Thus $\epsilon_n g(v') = g(\epsilon_n v') \le \gamma(x_n') + \delta_n \le (x_n)_d+ \delta_n$ because $x_n \in epi(\gamma)$. Then
    \[
    g(v') \le \frac{(x_n)_d+ \delta_n}{\epsilon_n} = \frac{\|v\|(x_n)_d}{\|x_n\|} + o(1)  \to v_d.
    \]
     This shows the direct inclusion in \eqref{eq:epi}. 

      For clarity of the exposition, in the rest of the proof we do not assume $x=0_d$ anymore. Let us establish the first equality in \eqref{eq:epi2}. Using successively \eqref{eq:epi} and \eqref{eq:epi=} (the latter being available because $x$ is CDD), we can write:
\begin{align*}
T^B_x\bar{\Omega} &= \overline{\mathring{epi}(g)}.
\end{align*}
But
\begin{align*}
    \mathring{epi}(g) &  = \left\{v \in \setR^d : v_d > \lim_{t \downarrow 0} \frac{\gamma(x' + t v') - \gamma(x')}{t} \right\}\\
& \subset \left\{ v \in \setR^d : \exists \, t_v> 0\,  \text{ s.t. } v_d > \frac{\gamma(x' + t v') - \gamma(x')}{t}  \, \forall \,  t \in (0,t_v) \right\}\\
&= \left\{v \in \setR^d : \exists \, t_v> 0\,  \text{ s.t. }  x_d + t v_d > \gamma(x' + t v') \ \forall\,  t \in (0,t_v) \right\}  \quad  (\text{using that } x_d=\gamma(x')) \\
&= \tilde{F}_x(\bar{\Omega}).
\end{align*}

\nd This shows $T^B_x\bar{\Omega}  \subset \overline{\tilde{F}_x(\bar{\Omega})}.$ The converse inclusion is obvious since the Bouligand tangent cone is closed and contains $\tilde{F}_x(\bar{\Omega})$.

The third equality in \eqref{eq:epi2} is an obvious consequence of the first and second ones. So we are left with proving the second one. Since $\tilde{F}_x(\bar{\Omega}) \subset T^B_x{\bar{\Omega}}$ we clearly have $ int(\tilde{F}_x(\bar{\Omega})) \subset int(T^B_x{\bar{\Omega}}),$ hence it is enough to show the converse inclusion $int(T^B_x{\bar{\Omega}}) \subset int(\tilde{F}_x(\bar{\Omega}))$. By \eqref{eq:epi} and the continuity of $g$, this amounts to showing that $\mathring{epi}(g) \subset int(\tilde{F}_x(\bar{\Omega}))$. Consider $v \in \mathring{epi}(g)$. Then there exists $\eta>0$ such that $g(w')<w_d$ for any $w \in \setB^d_\eta(v)$. By definition of $g$, for any such a $w$, there exists $t_w>0$ such that for any $t \in (0,t_w)$,
\[
\frac{\gamma(x'+tw')-\gamma(x')}{t} < w_d.
\]
Since $\gamma(x') = x_d$ the previous rewrites as
\[
\gamma(x'+tw') < x_d + tw_d.
\]
Thus $w \in \tilde{F}_x(\bar{\Omega})$. This means that $\setB^{d}_\eta(v) \subset \tilde{F}_x(\bar{\Omega})$, hence $v \in int(\tilde{F}_x(\bar{\Omega}))$ as claimed.

To conclude, let us explain why \eqref{eq:epi3} holds. By \eqref{eq:epi}, the set $\partial(T^B_x{\bar{\Omega}})$ coincides with the graph of $g$. Since the latter is continuous, its graph has $d$-dimensional Lebesgue measure equal to $0$.

\end{proof}

\begin{remark}\label{rem:feasible directions don't correspond in general under diffeomorphisms} 
It should be noted that feasible direction cones are not preserved by diffeomorphisms. Indeed, consider the closed subsets $A \df \{1 \le x \le \sqrt{y}\}$ and $B \df \{1\le x/2+1/2\le y\}$ of $(0,+\infty)^2$, and $\Phi : A \to B$  mapping $(x,y)$ to $(x,y^2)$. Then $\Phi$ is a diffeomorphism in the sense of Definition \ref{Diffeomorphism of sets in ddim} since it trivially extends to a diffeomorphism of $(0,+\infty)$ onto itself. But $F_{(1,1)}(A) = B \backslash \{x = y\}$ is not closed while $F_{(1,1)}(B) = B$ is, so the linear isomorphism $\di_{(1,1)}\Phi$ cannot map $F_{(1,1)}(A)$ to $F_{(1,1)}(B)$. See Figure \ref{boundaries}.

\begin{figure}[!h] \centering \begin{subcaptionblock}{.4\textwidth} \centering \includegraphics[scale=0.3]{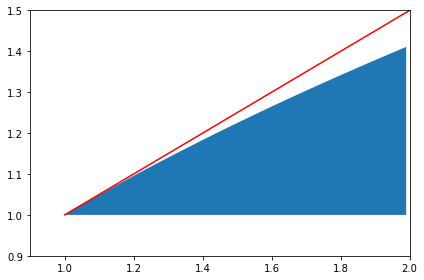} \caption{} \end{subcaptionblock}
\begin{subcaptionblock}{.4\textwidth} \centering \includegraphics[scale=0.3]{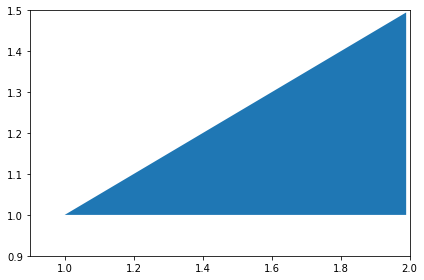} \caption{}\end{subcaptionblock}
\caption{Sets $A$ and $B$ with $F_{(1,1)}(B)\backslash F_{(1,1)}(A)=B \cap \{x=y\}$ in red}\label{boundaries} \end{figure}

\end{remark}

The following proposition relates the inward tangent sector and the Bouligand tangent cone at $\cC^0$ boundary points.

\begin{proposition}\label{prop:at boundary of open sets, inward sector equals tangent cone} 
    Let $\Omega\subset \ddim$ be open with a non-empty boundary, and $x\in \partial{\Omega}$ be a $\cC^0$ boundary point. Then $$I_x\bar{\Omega}\subset T^B_x\bar{\Omega}.$$ Moreover, if $x$ is LCDD, then $$\mathcal{L}^d(T^B_x\bar{\Omega} \backslash I_x\bar{\Omega})=0.$$
\end{proposition}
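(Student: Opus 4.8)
The plan is to prove the two inclusions separately, the first one being valid for any $x$ and the second one genuinely using the LCDD hypothesis. For $I_x\bar{\Omega}\subset T^B_x(\bar{\Omega})$ I would argue directly from the definitions: take $v=c'(0)\in I_x\bar{\Omega}$ with $c\in\cC^1((-\epsilon,\epsilon),\setR^d)$, $c(0)=x$, $c([0,\epsilon))\subset\bar{\Omega}$, and set $a_n\df c(1/n)\in\bar{\Omega}$ for $n$ large. If $v=0_d$ there is nothing to prove since $0_d\in T^B_x(\bar{\Omega})$ by definition; if $v\ne0_d$, then $a_n\to x$ and the first-order expansion $a_n-x=\tfrac1n\bigl(v+o(1)\bigr)$ forces $a_n\ne x$ for $n$ large and $(a_n-x)/\|a_n-x\|\to v/\|v\|$, whence $v\in T^B_x(\bar{\Omega})$. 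Note that this step does not use any regularity of $x$.

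For the reverse inclusion under the LCDD assumption, I would first reduce to $x=0_d$ by composing with a suitable rigid motion (which transports both $I_x\bar{\Omega}$ and $T^B_x(\bar{\Omega})$ through its linear part, and turns \eqref{eq:boundary_points} into an equality with a function $\gamma$ satisfying $\gamma(0_{d-1})=0$). By Proposition \ref{lem:tangent cones as epigraphs} we then have $T^B_{0_d}(\bar{\Omega})=epi(g)$ with $g\df\gamma'(0_{d-1};\cdot)$, so the goal becomes: every $v=(v',v_d)$ with $v_d\ge g(v')$ belongs to $I_{0_d}\bar{\Omega}$. The natural idea is to realize such a $v$ as $c'(0)$ for a curve of the form $c(t)=(tv',\,tv_d+\psi(t))$ with $\psi\in\cC^1$ near $0$, $\psi(0)=\psi'(0)=0$ (extended by $0$ for $t\le0$). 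Any such $c$ is automatically $\cC^1$ with $c(0)=0_d$ and $c'(0)=v$, and since $c(t)\to 0_d$, for small $t\ge0$ one has $c(t)\in\bar{\Omega}$ as soon as $tv_d+\psi(t)\ge\gamma(tv')$, thanks to the local identification $epi(\gamma)\cap\setB_\delta^d=\bar{\Omega}\cap\setB_\delta^d$. Writing $\gamma(tv')=tg(v')+\rho(t)$ with $\rho(t)\df\gamma(tv')-tg(v')$, the directional differentiability of $\gamma$ at $0_{d-1}$ (together with $\gamma(0_{d-1})=0$) gives that $\rho$ is continuous, $\rho(0)=0$ and $\rho(t)=o(t)$; since $v_d\ge g(v')$ and $t\ge 0$, it then suffices to produce a $\cC^1$ function $\psi$ with $\psi(0)=\psi'(0)=0$ and $\psi(t)\ge\rho(t)$ on a right neighbourhood of $0$, because then $tv_d+\psi(t)\ge tg(v')+\rho(t)=\gamma(tv')$.

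The heart of the argument — and the step I expect to be the main obstacle — is therefore the following elementary but slightly delicate lemma: given a continuous $\rho$ with $\rho(0)=0$ and $\rho(t)=o(t)$, there exists a $\cC^1$ function $\psi$ near $0$ with $\psi(0)=\psi'(0)=0$ and $\psi\ge\rho$ on a right neighbourhood of $0$. I would prove it by first passing to the running supremum $R(t)\df\sup_{s\in[0,t]}\rho(s)$, which is continuous, non-decreasing, non-negative, satisfies $R(0)=0$, dominates $\rho$, and is still $o(t)$ (if $\rho(s)\le\varepsilon s$ on $[0,\delta]$ then $R(t)\le\varepsilon t$ on $[0,\delta]$), and then mollifying $R$ by averaging, e.g.\ $\psi(t)\df\tfrac1t\int_0^{2t}R(s)\,\di s$ for $t>0$ and $\psi(0)\df0$. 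Monotonicity and non-negativity of $R$ give $\psi(t)\ge R(t)\ge\rho(t)$, while $0\le\psi(t)\le 2R(2t)=o(t)$ shows $\psi$ is differentiable at $0$ with $\psi'(0)=0$, and the formula $\psi'(t)=-\tfrac1t\psi(t)+\tfrac2tR(2t)$ shows $\psi'(t)\to0$ as $t\to0^+$, so $\psi\in\cC^1$ near $0$. This $\psi$ is precisely the $\cC^1$ "cushion" compensating for the fact that $\gamma$ is merely Lipschitz and directionally differentiable: straight-line curves $c(t)=tv$ only handle directions with $v_d>g(v')$, i.e.\ the interior $\mathring{epi}(g)$, and since $I_{0_d}\bar{\Omega}$ is not \emph{a priori} closed, the boundary directions $v_d=g(v')$ really do require the bent curve above. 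Combining this with the first inclusion gives $epi(g)\subset I_{0_d}\bar{\Omega}\subset T^B_{0_d}(\bar{\Omega})=epi(g)$, hence $I_x\bar{\Omega}=T^B_x(\bar{\Omega})$ after undoing the rigid motion.
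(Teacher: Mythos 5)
Your proof is correct. The first inclusion is handled exactly as in the paper (a Taylor expansion of the curve at $0$), so the only real point of comparison is the reverse inclusion $T^B_x\bar{\Omega}\subset I_x\bar{\Omega}$ in the LCDD case. There the paper argues by splitting the Bouligand cone into its interior and its boundary: interior directions are obtained from straight segments $t\mapsto x+tv$ via the identity $int(T^B_x\bar{\Omega})=int(\tilde{F}_x(\bar{\Omega}))$ of Proposition \ref{lem:tangent cones as epigraphs}, while boundary directions $v_d=\gamma'(x';v')$ are identified with initial velocities of curves lying in $\partial\Omega$ itself. You instead treat the whole epigraph $epi(g)$ uniformly with a single family of bent curves $c(t)=(tv',tv_d+\psi(t))$, where $\psi$ is a $\cC^1$ cushion dominating the $o(t)$ error $\rho(t)=\gamma(tv')-tg(v')$, built by mollifying the running supremum of $\rho$. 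This is a genuinely different (and self-contained) argument for the delicate part: you correctly identify that straight lines fail precisely on $\partial(epi(g))$ because $\rho$ may be positive there, and your lemma producing $\psi$ with $\psi(0)=\psi'(0)=0$ and $\psi\ge\rho$ is exactly the right device — the verification that $\psi(t)=\tfrac1t\int_0^{2t}R$ satisfies $R(t)\le\psi(t)\le 2R(2t)=o(t)$ and $\psi'(t)\to 0$ is sound. What your route buys is that you never need to produce $\cC^1$ curves constrained to lie \emph{in} the graph of the merely Lipschitz-and-directionally-differentiable $\gamma$ (a point the paper passes over rather quickly, since $t\mapsto\gamma(x'+tv')$ need not be $\cC^1$); what the paper's route buys is brevity and a direct reuse of the cone identities \eqref{eq:epi} and \eqref{eq:epi2}. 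Two small housekeeping remarks: you should say explicitly that the resulting $c$ maps some $(-\epsilon',\epsilon')$ into $\setR^d$ with $c([0,\epsilon'))\subset\bar{\Omega}$, shrinking $\epsilon'$ so that $c([0,\epsilon'))\subset\setB^d_\delta$ where the local epigraph identification holds; and your closing chain of inclusions could equally be stated without invoking the first inclusion, since $T^B_{0_d}(\bar{\Omega})=epi(g)$ is already an equality.
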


\begin{proof}

Let us first show $\subset.$ Take $c'(0) \in I_x \bar{\Omega}$ where $c \in \cC^1(I,\setR^d)$ is such that $I = (-\epsilon, \epsilon)$ for some $\epsilon>0$, $c(0)=x$, and $c([0,\epsilon)]) \subset \bar{\Omega}$. Set $x_n:=c(1/n)$ for any $n \in \mathbb{N}$ large enough. By Taylor expansion, $c(t)=c(0) + tc'(0)+ o(t)$ as $t \downarrow 0$, hence  $\frac{x_n-x}{\norm{x_n-x}}\to \frac{c'(0)}{\norm{c'(0)}}.$ This yields $c'(0) \in T^B_x\bar{\Omega}$ as desired.

Let us prove the second statement dealing with the LCDD case. By \eqref{eq:epi2}, if $v \in int(T^B_x\bar{\Omega}) = int(\tilde{F}_x(\Omega))$, then there exists $t_v>0$ such that $x+tv \in \bar{\Omega}$ for any $t \in (0,t_v)$, so $v = c'(0) \in I_x \bar{\Omega}$ with $c(t) \df x+tv$ for any $t \in (-t_v,t_v)$. This implies that $int(T^B_x\bar{\Omega}) \subset I_x \bar{\Omega}$. But we know from \eqref{eq:epi3} that $\mathcal{L}^d(\partial T^B_x\bar{\Omega})=0$. This gives the conclusion.
\end{proof}

Before closing this subsection, let us explain how the Bouligand tangent cone provides us with a convenient way to define cusps.

\begin{definition}
    Let $\Omega\subset \ddim$ be open with a non-empty boundary, and $x\in \partial{\Omega}$ be a $\cC^0$ boundary point. We say that $x$ is a cusp if $$ \mathcal{L}^d( T_x^B \bar{\Omega}) = 0.$$
\end{definition}

\begin{remark}
    This definition also applies to the general case of a subset $A \subset \setR^d$ : a cusp would be a point $x \in A$ such that $\mathcal{L}^d( T_x^B A) = 0$.
\end{remark}

\begin{examples}
    According to the previous definition, the origin $0_3$ is a cusp for both
    \[
    A \df  \{(x,y,z) \in \setR^3 : z > (x^2 + y^2)^{1/4}\} \qquad \text{and} \qquad B \df \{(x,y,z) \in \setR^3 : z > \sqrt{|x|}\},
    \]
  see Figure \ref{cusps}. Indeed, the half-line $T_{0_3}^B\bar{A} = \{(x,0,z) : z \ge 0\}$ and the half-space $T_{0_3}^B \bar{B} = \{(0,0,z) : x \in \setR, z \ge 0\}$ are both $\mathcal{L}^3$-negligible.

\begin{figure}[!h] \centering \begin{subcaptionblock}{.4\textwidth} \centering \includegraphics[scale=0.5]{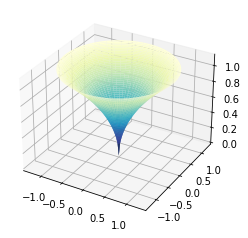} \caption{} \end{subcaptionblock}
\begin{subcaptionblock}{.4\textwidth} \centering \includegraphics[scale=0.5]{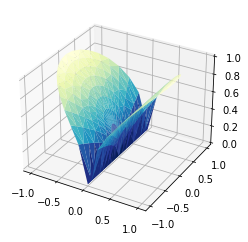} \caption{}\end{subcaptionblock}
\caption{Sets $A$ and $B$ both have $0_3$ as a cusp}\label{cusps}\end{figure}
\end{examples}

\subsection{Non-fluctuating boundary and local blow-ups}

Consider an open subset $\Omega\subset \ddim$ with a non-empty boundary, and a boundary point $x\in \partial \Omega.$
For any $t>0$, set $$\Omega_{x,t}:=\frac{\Omega-x}{{t}} \, \cdot$$  We are interested in finding the limit, in the sense of convergence of characteristic functions, of the sets $\Omega_{x,t}$ as $t\to 0$. When this limit exists, we call it the blow up of $\Omega$ at $x.$ With this in mind, we introduce the following.

\begin{definition}\label{def:domain with non-fluctuating boundary}
Consider an open subset $\Omega\subset \ddim$ with a non-empty boundary, and $x\in \partial \Omega.$
\begin{enumerate}[label=(\roman*)]
    \item We say that $v \in T_x^B\bar{\Omega} \backslash \tilde{F}_x\Omega$ is a non-fluctuating direction at $x$ if the following holds : for any $t_n\to 0$ such that $x+t_nv\notin \Omega$ for any $n$ there exists $t_v>0$ such that $x+tv\notin \Omega$ for any $t \in (0,t_v)$.
    Otherwise we say that $v$ is a fluctuating direction at $x$.
    \item We say that $\Omega$ has (a.e.) non-fluctuating boundary at $x$ if any ($\sigma$-a.a.) $v \in S_x^B\bar{\Omega}$ is a non-fluctuating direction. Otherwise we say that $\Omega$ has a fluctuating boundary at $x$.
\end{enumerate}

\end{definition}

\nd Roughly speaking, a domain with non-fluctuating boundary at a boundary point $x$ is so that whenever there exist points on a line spanned by $v$ arbitrary close to $x$ that stay outside of $\Omega,$ then there must be a segment of that line that also remains outside $\Omega.$ This explains the terms fluctuating and non-fluctuating.

\begin{examples}
    For any $k \ge 0$, consider $\Omega_k:=\{(x,y)\in \cl^2: y < x^k \sin (1/x), 0<x<1\}.$ For $k=0,1$, the set $\Om_k$ has a fluctuating boundary at the boundary point $0_2,$ however for $k\ge 2$, the sole fluctuating direction of $\Om_k$ at $0_2$ is $(1,0)$, and thus the set has a.e.~non-fluctuating boundary, see Figure \ref{fluctu}.

\begin{figure}[!h] \centering \begin{subcaptionblock}{.4\textwidth} \centering \includegraphics[scale=0.45]{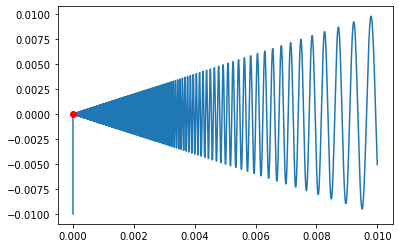} \caption{Fluctuating $\partial \Omega_1$} \end{subcaptionblock}
\begin{subcaptionblock}{.4\textwidth} \centering \includegraphics[scale=0.45]{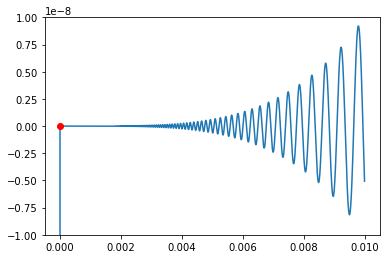} \caption{Non-fluctuating $\partial \Omega_4$}\end{subcaptionblock}
\caption{Fluctuating and non-fluctuating boundaries}\label{fluctu}
\end{figure}

\end{examples}

We shall use the following lemma.

\begin{lemma}\label{lem:general}
Let $\Omega\subset \ddim$ be open with a non-empty boundary, and $x \in \partial \Omega$. For any non-fluctuating direction $z$ at $x$,
  \[
 1_{\Omega_{x,t}}(z) \to 1_{\tilde{F}_x(\Omega)}(z) \qquad \text{as } t \downarrow 0.
\]
\end{lemma}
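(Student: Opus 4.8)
The statement separates into two cases according to whether $z \in \tilde{F}_x(\Omega)$ or $z \notin \tilde{F}_x(\Omega)$ (the case $z=0_d$ being trivial since $0_d$ lies in both sets, or can be excluded as a non-fluctuating direction by the definition which only concerns $v \in T_x^B\bar\Omega \setminus \tilde F_x\Omega$). So first I would fix a non-fluctuating direction $z$ and deal with $z \in \tilde{F}_x(\Omega)$: by the very definition of the open feasible direction cone, there is $t_z > 0$ with $x + tz \in \mathrm{int}(\Omega) = \Omega$ for all $t \in (0, t_z)$. Unwinding $\Omega_{x,t} = (\Omega - x)/t$, membership $z \in \Omega_{x,t}$ is equivalent to $x + tz \in \Omega$; hence for all $t \in (0, t_z)$ we have $z \in \Omega_{x,t}$, so $1_{\Omega_{x,t}}(z) = 1 = 1_{\tilde F_x(\Omega)}(z)$ for all small $t$, and the limit holds trivially.

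The substantive case is $z \notin \tilde F_x(\Omega)$, where $1_{\tilde F_x(\Omega)}(z) = 0$ and I must show $1_{\Omega_{x,t}}(z) \to 0$, i.e. that $x + tz \notin \Omega$ for all sufficiently small $t > 0$. I would argue by contradiction: suppose not. Then there is a sequence $t_n \downarrow 0$ with $x + t_n z \in \Omega$ for every $n$. I want to feed this into the non-fluctuating hypothesis, which is phrased in terms of sequences along which $x + t_n v \notin \Omega$; so I should instead consider the complementary statement. The cleanest route: since $z \notin \tilde F_x(\Omega)$, for every $t_v > 0$ there exists $t \in (0, t_v)$ with $x + tz \notin \mathrm{int}(\Omega) = \Omega$. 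Choosing $t_v = 1/n$ produces a sequence $s_n \downarrow 0$ with $x + s_n z \notin \Omega$. Now the non-fluctuating hypothesis applies to this sequence $\{s_n\}$ (provided $z \in T_x^B\bar\Omega \setminus \tilde F_x\Omega$, which holds here — $z \notin \tilde F_x\Omega$ by assumption, and $z \in T_x^B\bar\Omega$ because $1_{\Omega_{x,t}}(z) \to$ something nonzero would force... actually I should handle the case $z \notin T^B_x\bar\Omega$ separately): it yields $t_z > 0$ such that $x + tz \notin \Omega$ for all $t \in (0, t_z)$. This directly contradicts $x + t_n z \in \Omega$ for all $n$ once $t_n < t_z$. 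Hence $1_{\Omega_{x,t}}(z) = 0$ for all small $t$, matching $1_{\tilde F_x(\Omega)}(z) = 0$.

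It remains to dispatch the possibility $z \notin T_x^B\bar\Omega$: in that case, since $T_x^B\bar\Omega$ is a closed cone and $z/\|z\| \notin T_x^B\bar\Omega$, there is a neighborhood of the ray $\mathbb{R}_{>0}z$ disjoint from $\bar\Omega$ near $x$ — concretely, there is $\rho > 0$ and $t_z > 0$ with $x + tz \notin \bar\Omega \supset \Omega$ for $t \in (0, t_z)$ (otherwise one extracts a sequence $x_n = x + t_n z \in \bar\Omega$ with $(x_n - x)/\|x_n - x\| = z/\|z\| \to z/\|z\|$, forcing $z \in T_x^B\bar\Omega$, a contradiction). So again $1_{\Omega_{x,t}}(z) = 0 = 1_{\tilde F_x(\Omega)}(z)$ for small $t$, using $\tilde F_x(\Omega) \subset T_x^B\bar\Omega$.

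\textbf{Main obstacle.} The only delicate point is bookkeeping the three mutually exclusive regimes ($z \in \tilde F_x(\Omega)$; $z \in T_x^B\bar\Omega \setminus \tilde F_x(\Omega)$; $z \notin T_x^B\bar\Omega$) and making sure the non-fluctuating hypothesis is invoked on a legitimately constructed sequence — in particular correctly negating ``$z \notin \tilde F_x(\Omega)$'' to extract the sequence $\{s_n\}$ with $x + s_n z \notin \Omega$, rather than the useless direction. Everything else is immediate unwinding of the scaling $\Omega_{x,t} = (\Omega-x)/t$. I expect the final write-up to be short.
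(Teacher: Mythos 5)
Your proof is correct and follows essentially the same route as the paper's: handle $z\in\tilde F_x(\Omega)$ directly from the definition of the open feasible direction cone, and for $z\notin\tilde F_x(\Omega)$ negate that membership to produce a sequence $s_n\downarrow 0$ with $x+s_nz\notin\Omega$ and then invoke the non-fluctuating hypothesis. Your extra treatment of $z\notin T^B_x\bar\Omega$ is harmless but not needed, since by Definition \ref{def:domain with non-fluctuating boundary} a non-fluctuating direction is by definition an element of $T^B_x\bar\Omega\setminus\tilde F_x\Omega$; likewise the contradiction framing in your main case is superfluous, as you end up proving $x+tz\notin\Omega$ for all small $t$ directly.
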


\begin{proof}
If $1_{\tilde{F}_x(\Omega)}(z) = 1$, then there exists $t_0 > 0$ such that for any $t \in (0,t_0)$ we have $x + tz \in \Omega$. The latter is obviously equivalent to $z \in \Omega_{x,t}$, hence we get $1_{\Omega_{x,t}}(z) = 1$ for any $t \in (0,t_0)$, thus $1_{\Omega_{x,t}}(z) \to 1$ as $t \downarrow 0$. Now if $1_{\tilde{F}_x(\Omega)}(z)=0$, then $z \notin \tilde{F}_x(\Omega)$ which implies that for any positive integer $n$ there exists $t_n \in (0, 1/n)$ such that $x+t_nz \notin  \Omega$ for any $n$. By definition of a non-fluctuating direction, we obtain that $x+sz \notin \Omega$ for any $s \in (0,t_z)$ for some $t_z>0$. Then $z \notin \Omega_{x,s}$ for any such a $s$, hence $1_{\Omega_{x,s}}(z) \to 0 $ as $s \to 0$.
\end{proof}

\nd Now we focus on particular boundary points.

\begin{lemma}\label{lem:negligible}
    Let $\Omega\subset \ddim$ be open with a non-empty boundary, and $x \in \partial \Omega$ be LCDD. Then the fluctuating directions at $x$ form a $\mathcal{L}^d$-negligible set.
\end{lemma}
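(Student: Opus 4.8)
The plan is to show that the set of fluctuating directions at an LCDD boundary point $x$ is contained in the boundary $\partial(T^B_x\bar\Omega)$ of the Bouligand tangent cone, which by \eqref{eq:epi3} of Proposition \ref{lem:tangent cones as epigraphs} has $\mathcal L^d$-measure zero. So the core claim to establish is: every fluctuating direction $z$ at $x$ lies in $\partial(T^B_x\bar\Omega)$, equivalently (using \eqref{eq:epi}) satisfies $z_d = \gamma'(x';z')$ in the local coordinates where \eqref{eq:boundary_points} holds up to a rigid motion.

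First I would dispose of the directions not lying in the tangent cone. Recall a fluctuating direction is by Definition \ref{def:domain with non-fluctuating boundary} required to lie in $T^B_x\bar\Omega \setminus \tilde F_x(\Omega)$, so I only need to handle $z \in T^B_x\bar\Omega \setminus \tilde F_x(\Omega)$. By \eqref{eq:epi2} we have $\mathrm{int}(T^B_x\bar\Omega) = \mathrm{int}(\tilde F_x(\bar\Omega))$; I would then argue that $z \notin \mathrm{int}(T^B_x\bar\Omega)$, since a point in the interior of $T^B_x\bar\Omega$ belongs to $\tilde F_x(\Omega)$ (this is exactly the computation $\mathring{epi}(g)\subset \mathrm{int}(\tilde F_x(\bar\Omega))$ carried out in the proof of Proposition \ref{lem:tangent cones as epigraphs}, combined with $\mathrm{int}(\tilde F_x(\bar\Omega)) \subset \tilde F_x(\Omega)$ which holds because interior points of $\bar\Omega$ near $x$ are in $\Omega$). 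Hence $z \in T^B_x\bar\Omega \setminus \mathrm{int}(T^B_x\bar\Omega) = \partial(T^B_x\bar\Omega)$. That is the whole argument: fluctuating directions are confined to a set of measure zero.

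Let me be a bit more careful about the step $\mathrm{int}(\tilde F_x(\bar\Omega)) \subset \tilde F_x(\Omega)$, which is where the distinction between $\bar\Omega$ and $\Omega$ matters. If $v \in \mathrm{int}(\tilde F_x(\bar\Omega))$, then by the proof of Proposition \ref{lem:tangent cones as epigraphs} (the chain of inclusions there shows $\mathrm{int}(T^B_x\bar\Omega)=\mathrm{int}(\tilde F_x(\bar\Omega)) = \mathring{epi}(g)$ and that $\mathring{epi}(g)\subset \tilde F_x(\bar\Omega)$ via the explicit estimate $\gamma(x'+tw')<x_d+tw_d$), one actually gets the strict inequality $\gamma(x' + tv') < x_d + tv_d$ for all small $t>0$, which by \eqref{eq:boundary_points} (the second line, describing $\Omega$ itself as the strict epigraph intersected with a ball) puts $x+tv \in \Omega$ for all small $t>0$, i.e.\ $v\in\tilde F_x(\Omega)$. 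So the only subtlety is to pick up the strict inequality rather than the weak one, and that is already present in the cited proof.

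The main obstacle — if there is one — is purely bookkeeping: making sure the rigid motion identifications are consistent and that one genuinely gets strict inequalities (hence membership in $\Omega$, not merely $\bar\Omega$) from interior points of the tangent cone, so that such directions are not fluctuating. Once that is in place, the measure-zero conclusion is immediate from \eqref{eq:epi3}. I would write the proof as: ``Let $z$ be a fluctuating direction at $x$. Then $z \in T^B_x\bar\Omega \setminus \tilde F_x(\Omega)$. If $z \in \mathrm{int}(T^B_x\bar\Omega)$, then by \eqref{eq:epi2} and the proof of Proposition \ref{lem:tangent cones as epigraphs}, $z\in\tilde F_x(\Omega)$, a contradiction. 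Hence $z\in\partial(T^B_x\bar\Omega)$, and by \eqref{eq:epi3} the set of such $z$ is $\mathcal L^d$-negligible.''
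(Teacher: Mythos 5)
Your proposal is correct and follows essentially the same route as the paper: fluctuating directions lie by definition in $T^B_x\bar{\Omega}\setminus \tilde{F}_x(\Omega)$, which is contained in $T^B_x\bar{\Omega}\setminus int(T^B_x\bar{\Omega})=\partial(T^B_x\bar{\Omega})$ by \eqref{eq:epi2}, and \eqref{eq:epi3} finishes the argument. Your extra care in distinguishing $\tilde{F}_x(\Omega)$ from $\tilde{F}_x(\bar{\Omega})$ (via the strict inequality $\gamma(x'+tv')<x_d+tv_d$ and the second line of \eqref{eq:boundary_points}) is a welcome clarification of a point the paper glosses over, but it does not change the substance of the proof.
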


\begin{proof}
     It follows from the definition of fluctuating direction that the set of such directions at $x$ belongs to $T^B_x\bar{\Omega}\setminus \tilde{F}_x(\Omega)$. But
    \begin{equation*}
        T^B_x\bar{\Omega}\setminus \tilde{F}_x(\Omega) \subset T^B_x\bar{\Omega}\setminus int(\tilde{F}_x(\Omega))
        = T^B_x\bar{\Omega}\setminus int(T^B_x\bar{\Omega})
        = \partial T^B_x\bar{\Omega}
    \end{equation*}
    where the first equality follows from the second equality in \eqref{eq:epi2}. Then \eqref{eq:epi3} gives the desired result. 
\end{proof}

We are now in a position to state and prove the following important property.

\begin{proposition}\label{prop:gen}
    Let $\Omega\subset \ddim$ be open with a non-empty boundary, and $x \in \partial \Omega$ be either an interior point, an LCDD border point, or a cusp. Then:
     \[
1_{\Omega_{x,t}}(z) \to 1_{T^B_x\bar{\Omega}}(z) \quad \text{ as $t \downarrow 0$,}  \quad \text{ for $\mathcal{L}^d$-a.e.~}z \in \mathbb{R}^d.
   \]
\end{proposition}

\begin{proof}
    Assume first that $x$ is interior. Consider $r>0$ such that $B_r(x) \subset \Omega$. Then for any $t>0$ the blow-up ${B_{r}(x)}_{x,t}$ is contained in $\Omega_{x,t}$. Thus 
    \[
    1_{\Omega_{x,t}} \ge 1_{{B_{r}(x)}_{x,t}} \stackrel{t \downarrow 0}{\longrightarrow} 1_{\setR^d} = 1_{T_x^B(\bar{\Omega})},
    \]
    so $1_{\Omega_{x,t}} \to 1$.
    
    Assume now that $x$ is LCDD. From Lemma \ref{lem:general} and Lemma \ref{lem:negligible} we get that $1_{\Omega_{x,t}}(z) \to 1_{\tilde{F}_x(\Omega)}(z)$ as $t \downarrow 0$ for $\mathcal{L}^d$-a.e.~$z \in \mathbb{R}^d$, and \eqref{eq:epi3} yields that $1_{\tilde{F}_x(\Omega)}(z) = 1_{T^B_x\bar{\Omega}}(z)$ for $\mathcal{L}^d$-a.e.~$z \in \mathbb{R}^d$.

If $x$ is a cusp, then we must prove that $1_{\Omega_{x,t}}(z) \to 0$ as $t \downarrow 0$ for $\mathcal{L}^d$-a.e.~$z \in \mathbb{R}^d$. For any $z \in \setR^d$ and $t>0$ :
\[
x + tz \in \Omega \iff z \in \Omega_{x,t} \iff 1_{\Omega_{x,t}}(z) = 1,
\]
so that $\tilde{F}_x(\bar{\Omega})$ rewrites as $\{z \in \setR^d : \exists \, t_z>0, \forall t \in (0,t_z), 1_{\Omega_{x,t}}(z) = 1\}$. Since $\tilde{F}_x(\bar{\Omega}) \subset T_x^B(\bar{\Omega})$, we know by definition of a cusp that $\mathcal{L}^d(\tilde{F}_x(\bar{\Omega}))=0$. Thus $\mathcal{L}^d$-a.e.~$z \in \setR^d$ there exists $t_z>0$ such that  $1_{\Omega_{x,t}}(z) = 0$ for any $t \in (0,t_z)$.
\end{proof}

\begin{remark}
There exist domains with a fluctuating boundary which do not blow up at a boundary point to their open feasible direction cone. 
Consider $\Om:=\{(x_1,x_2): x_2 \ne x_1 \sin(1/x_1), -1< x_1 <1\}.$ Then $\tilde{F}_{0_2}(\Omega)=\{(x_1,x_2): x_2 > |x_1| \text{ or } x_2 < - |x_1|, -1< x_1 <1\},$ but $1_{\Omega_{0_2,t}}(z) \to 1_{\setR^2}(z)$ for a.e.~$z \in \setR^2$, because the graph of $x \mapsto x \sin(1/x)$ has zero two-dimensional Lebesgue measure.

\begin{figure}[!h] \centering
\includegraphics[scale=0.5]{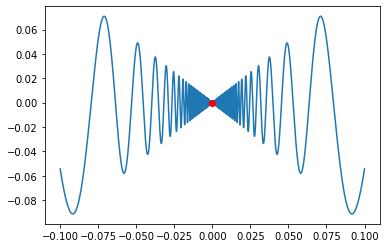}
\caption{Fluctuating boundary with blow-up $\setR^2\neq \tilde{F}_{0_2}(\Omega)$}
\label{Im22}
\end{figure}

\end{remark}

Let us conclude with providing a large class of domains with non-fluctuating boundary, though we will not focus on it in this paper. We recall that a set $A \subset \setR^d$ is convex if the segment $[x,y]\df \{(1-t)x+ty : t \in [0,1]\}$ belongs to $A$ for any $x,y \in A$.

\begin{definition}\label{local convexity}
We say that $S \subset \mathbb{R}^d$ is locally convex at $x \in S$ if there exists $\delta>0$ such that $S\cap \mathbb{B}_\delta(x)$ is convex.
\end{definition}

Then the following holds.

\begin{proposition}\label{Convergence of transalated and scaled domains}
Let $\Omega$ be an open subset of $\ddim$ with a non-empty boundary, and $x \in \partial \Omega$. If either $\bar{\Omega}$ or $\mathbb{R}^d \setminus \bar{\Omega}$ is locally convex at $x$, then $\Omega$ has a non-fluctuating boundary at $x$.
\end{proposition}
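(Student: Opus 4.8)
The plan is to show that no $v \in S^B_x\bar\Omega$ is a fluctuating direction at $x$, which is precisely what Definition~\ref{def:domain with non-fluctuating boundary}(2) demands. Since a fluctuating direction lies in $T^B_x\bar\Omega \setminus \tilde F_x(\Omega)$, it suffices to verify that every $v \in S^B_x\bar\Omega \setminus \tilde F_x(\Omega)$ meets the conclusion of Definition~\ref{def:domain with non-fluctuating boundary}(1), namely that the ray $\ell(t) := x + tv$ stays outside $\Omega$ for all small $t > 0$. In fact I will prove the following dichotomy for \emph{every} unit vector $v$: either $\ell(t) \in \Omega$ for all small $t > 0$ (so $v \in \tilde F_x(\Omega)$ and there is nothing to check), or $\ell(t) \notin \Omega$ for all small $t > 0$ (the sought conclusion). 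The role of the hypothesis is exactly to exclude the oscillatory middle case that defines a fluctuating direction: local convexity forces the relevant local slice of $\bar\Omega$ --- or of $\setR^d \setminus \bar\Omega$ --- to meet each ray through $x$ in an \emph{interval}. Throughout I use that $\Omega$ coincides with $int(\bar\Omega)$, which holds in all the settings where this proposition is applied, e.g.\ whenever $\Omega$ is locally the strict epigraph of a continuous function (cf.~\eqref{eq:epi=}).

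\textbf{Case 1: $\bar\Omega$ locally convex at $x$.} Fix $\delta > 0$ with $A := \bar\Omega \cap \mathbb{B}_\delta(x)$ convex; then $x \in A$ since $x \in \partial\Omega \subset \bar\Omega$. Fix a unit vector $v$. Convexity of $A$ and $\ell(0) = x \in A$ make $\{t \in [0,\delta) : \ell(t) \in A\}$ an interval containing $0$; let $s \in [0,\delta]$ be its supremum, so $\ell(t) \in \bar\Omega$ for $t \in (0,s)$ and $\ell(t) \notin \bar\Omega$ for $t \in (s,\delta)$. If $s = 0$, then $\ell(t) \notin \bar\Omega \supset \Omega$ for all $t \in (0,\delta)$. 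If $s > 0$ and $\ell(t_0) \in int(\bar\Omega)$ for some $t_0 \in (0,s)$, then the standard fact that $(1-\lambda)q + \lambda p \in int(C)$ for $\lambda \in [0,1)$ whenever $C$ is convex, $p \in \bar C$, $q \in int(C)$ --- with $C = A$, $p = x$, $q = \ell(t_0)$ --- yields $\ell(t) \in int(\bar\Omega) = \Omega$ for $t \in (0,t_0]$, so $v \in \tilde F_x(\Omega)$. If $s > 0$ but $\ell((0,s)) \cap int(\bar\Omega) = \emptyset$, then $\ell(t) \in \bar\Omega \setminus int(\bar\Omega) \subset \setR^d \setminus \Omega$ for all $t \in (0,s)$. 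In each sub-case the dichotomy holds.

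\textbf{Case 2: $\setR^d \setminus \bar\Omega$ locally convex at $x$.} Fix $\delta > 0$ with $B := (\setR^d \setminus \bar\Omega) \cap \mathbb{B}_\delta(x)$ convex. Then $x \in \overline{\setR^d \setminus \bar\Omega}$ (otherwise $x$ would have a neighbourhood inside $\bar\Omega$, hence $x \in int(\bar\Omega) = \Omega$, contradicting $x \in \partial\Omega$), and since $\overline{\setR^d \setminus \bar\Omega} \cap \mathbb{B}_\delta(x) = \bar B \cap \mathbb{B}_\delta(x)$ we get $x \in \bar B$. The set $\bar B$ is closed, convex and contains $\ell(0) = x$, so $\{t \in [0,\delta) : \ell(t) \in \bar B\}$ is an interval with supremum $\rho \in [0,\delta]$. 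If $\rho > 0$, then $\ell(t) \in \bar B \subset \overline{\setR^d \setminus \bar\Omega}$ for $t \in (0,\rho)$; as $\overline{\setR^d \setminus \bar\Omega}$ is disjoint from the open set $\Omega$, this gives $\ell(t) \notin \Omega$. If $\rho = 0$, then for $t \in (0,\delta)$, $\ell(t) \in \mathbb{B}_\delta(x) \setminus \bar B \subset \mathbb{B}_\delta(x) \setminus \overline{\setR^d \setminus \bar\Omega}$, so $\ell(t) \in int(\bar\Omega) = \Omega$ and $v \in \tilde F_x(\Omega)$. Again the dichotomy holds. Hence in both cases every unit $v$, and in particular every $v \in S^B_x\bar\Omega$, satisfies the dichotomy and is not a fluctuating direction, proving the proposition.

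The delicate points are the passages from $\ell(t) \in \bar\Omega$ to $\ell(t) \in \Omega$ and from $\ell(t) \notin \bar B$ to $\ell(t) \in \Omega$: these rest on $\Omega = int(\bar\Omega)$ together with the elementary convexity lemma about open segments, and are the only spots where a condition on $\Omega$ beyond local convexity intervenes; the remainder is the elementary geometry of a ray issued from a point of a convex body. (One could also derive Case~1 from the identities $int(T^B_x\bar\Omega) = int(\tilde F_x(\bar\Omega))$ and $\partial(T^B_x\bar\Omega) = \partial(\tilde F_x(\bar\Omega))$ in Proposition~\ref{lem:tangent cones as epigraphs}, but those presuppose LCDD regularity of $\partial\Omega$, which the ray argument avoids.)
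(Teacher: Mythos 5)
Your argument is correct and, read against the paper's own proof, it is in fact the more careful of the two, though it travels a slightly different road and leans on an extra standing assumption that deserves comment. The paper argues by contradiction along a single ray: given $t_n\to 0$ with $x+t_nv\notin\Omega$, it disposes of the complement case in one line (the segment from $x$ to $x+t_nv$ stays outside $\Omega$) and, in the convex case, shows that an interlaced sequence $x+s_nv\in\Omega$ would force $x+t_{n+1}v\in\Omega$. You instead establish a clean dichotomy for \emph{every} unit vector --- the ray is eventually inside $\Omega$ or eventually outside --- by exploiting that $\{t: \ell(t)\in A\}$ is an interval when $A$ is convex, together with the open-segment lemma; this is logically the same mechanism but better organized, and your Case~2 correctly works with $\bar B$ where the paper tacitly replaces $\setR^d\setminus\bar\Omega$ by $\setR^d\setminus\Omega$ (note $x\notin\setR^d\setminus\bar\Omega$, so the paper's one-liner needs precisely the closure step you supply). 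The one point to flag is your standing hypothesis $\Omega=int(\bar\Omega)$: the proposition does not state it, and the paper's proof avoids it only by silently substituting local convexity of $\Omega$ (resp.\ $\setR^d\setminus\Omega$) for the stated hypothesis on $\bar\Omega$ (resp.\ $\setR^d\setminus\bar\Omega$). Some such regularization is genuinely needed, not a cosmetic convenience: for $\Omega=\mathbb{B}^2\setminus\left(\{0_2\}\cup\{(1/n,0): n\ge 1\}\right)$ one has $\bar\Omega=\bar{\mathbb{B}}^2$, which is locally convex at $0_2\in\partial\Omega$, yet $e_1$ is a fluctuating direction there, so the statement fails as literally written. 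Since the proposition is only invoked for domains that are local epigraphs of continuous functions, where \eqref{eq:epi=} yields $\Omega=int(\bar\Omega)$ locally, your version is fully adequate for the paper's purposes; you were right to make the assumption explicit, and the statement itself would benefit from carrying it.
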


\begin{proof}
    Fix $v\in \ddim.$ Assume that there exists $t_n\to 0$ such that $x+t_nv\notin \Omega$ for any $n$. We need to show that there exists $t_v>0$ such that $x+tv\notin \Omega$ for all $t \in (0,t_v)$. Since $x+t_nv \in \ddim \setminus \Omega$, if $\ddim \setminus \Omega$ is locally convex at $x$, then the claim obviously follows with $t_v$ equal to $t_0$.  Assume now that $\Omega$ is locally convex at $x.$ If the claim were not true, then there would exist $s_n\to 0$ such that $t_{n+1}<s_n<t_n$ and $x+s_nv\in \Omega$ for any $n$. But this is not possible because once $x+s_nv \in \Omega$ for some $n,$ all the points $x+sv$ with $s \in (0,s_n)$ must belong to $\Omega$ by convexity, in particular $x+t_{n+1} v$ must belong to $\Omega$, which is in contradiction with our initial assumption.
\end{proof}

\section{Manifolds with kinks}\label{sec:manifolds}

In this section, we introduce the notion of manifold with kinks.

\subsection{Topological manifolds with kinks}

Recall that a topological space is called paracompact if any open cover has a locally finite open refinement, and that a Hausdorff topological space is paracompact if and only if it admits partitions of unity subordinate to any open cover. For two topological spaces $X,Y$ and a subset $A \subset X$, a map $\varphi : A \to Y$ is a topological embedding if it is an homeomorphism onto its image.

\begin{definition}\label{interior and border charts}
    Let $M$ be a paracompact Hausdorff topological space.
    \begin{enumerate}[label=(\roman*)]
        \item A $d$-dimensional chart around $x \in M$ is a pair $(U,\phi)$ where $U$ is an open neighborhood of $x$ in $M$ and $\phi : U \to \ddim$ is a topological embedding. We say that  $(U,\phi)$ is centered at $x$ if $\phi(x)=0_d.$
        \item A $d$-dimensional interior chart around $x$ is a $d$-dimensional chart $(U,\phi)$ such that $\phi(U)$ is an open subset of $\ddim.$ Any $x$ admitting such a chart is called an interior point of $M.$ The set of interior points of $M$ is denoted by $int(M).$
        \item A $d$-dimensional border chart around $x$ is a $d$-dimensional chart $(U,\phi)$ such that $\phi : \bar{U} \to \ddim$ is a topological embedding satisfying $int(\phi(U))\neq \emptyset$ and $\phi(x)$ is a $\cC^0$ boundary point of $int(\phi(U))$.
         Any $x$ admitting a border chart is called a border point. The set of border points of $M$ is denoted by $\partial M.$
    \end{enumerate}
\end{definition}

\begin{remark}
    We use the word \textit{border} as a synonym for possibly singular boundary of manifolds.
\end{remark}

\begin{remark}\label{border points don't depend on the chart chosen}
    The definition of a border point $x$ is \textit{independent} of the choice of a border chart around $x$. Indeed, let $(U,\phi)$ and $(V,\psi)$ be two such charts. Since  $\phi, \psi$ are defined on $\bar{U}, \bar{V}$ respectively, the transition map $\psi \circ \phi^{-1}$ is a homeomorphism from $\phi(\bar{U}\cap \bar{V}) = \overline{\phi(U \cap V)}$ onto $\psi(\bar{U}\cap \bar{V}) = \overline{\psi(U \cap V)}$, in particular it maps $\partial [\phi(U \cap V)]$ to $\partial [\psi(U \cap V)].$ Then $\phi(x)$ is a $\cC^0$ boundary point of $\phi(U)$ if and only if $\psi(x)$ is a $\cC^0$ boundary point of $\psi(U),$ because $\psi \circ \phi^{-1}$ and its inverse are both $\cC^0$, so their composition with a continuous function $\gamma$ as in \eqref{eq:boundary_points} is still continuous.
\end{remark}
    
    Note that a border chart cannot be an interior chart, since for $(U,\phi)$ to be an interior chart, $\phi(x)$ has to be an interior point of $\phi(U),$ contradicting the fact that for a border chart, $\phi(x)$ is a boundary point of $\phi(U).$ What may not be immediate is that a point cannot be simultaneously an interior point with respect to one chart and a border point with respect to another chart. This is precisely the point of the next lemma.

\begin{lemma}\label{lem:either}
    Let $M$ be a paracompact Hausdorff topological space. Then $int(M)\cap \partial M= \emptyset.$
\end{lemma}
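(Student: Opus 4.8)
The plan is to show that if a point $x \in M$ admits both an interior chart $(U,\phi)$ and a border chart $(V,\psi)$, we reach a contradiction by comparing the local topological models at $x$ via the transition homeomorphism. First I would restrict to a common neighborhood: set $W = U \cap V$, which is an open neighborhood of $x$, and note that $\phi(W)$ is open in $\setR^d$ (being an open subset of the open set $\phi(U)$), while $\psi(\bar W)$ is the closure of an open set $\psi(W)$ having $\psi(x)$ as a $\cC^0$ boundary point of $int(\psi(W))$. The transition map $\psi \circ \phi^{-1}$ restricts to a homeomorphism from $\phi(W)$ onto $\psi(W)$. Shrinking further, I can take $\phi(W)$ to contain a Euclidean ball $\setB^d_r(\phi(x))$; its homeomorphic image under $\psi \circ \phi^{-1}$ is then an open neighborhood of $\psi(x)$ inside $\psi(W)$.

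Next I would invoke the topological model from Proposition \ref{Small ball near the border are homeomorphic}: since $\psi(x)$ is a $\cC^0$ boundary point of $int(\psi(W))$, there is $\delta>0$ such that $\overline{int(\psi(W))} \cap \setB^d_\delta(\psi(x))$ is homeomorphic to $\bar{\mathbb{H}}^d \cap \setB^d_\delta$. The key step is then an invariance-of-domain argument: an open subset of $\setR^d$ containing $\psi(x)$ — obtained as the homeomorphic image of the ball $\setB^d_r(\phi(x))$ — cannot be embedded as a neighborhood of a boundary point of $\bar{\mathbb{H}}^d$, because no neighborhood of a point $z$ with $z_d = 0$ in $\bar{\mathbb{H}}^d$ is homeomorphic to an open ball of $\setR^d$ (a standard consequence of Brouwer's invariance of domain / local homology $H_d(\bar{\mathbb{H}}^d, \bar{\mathbb{H}}^d \setminus \{z\}) = 0$ versus $H_d(\setR^d, \setR^d \setminus \{z\}) = \ZZ$). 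Composing the homeomorphisms $\phi^{-1}$, $\psi$, and the model homeomorphism of Proposition \ref{Small ball near the border are homeomorphic} would produce exactly such an impossible embedding, giving the contradiction.

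I expect the main obstacle to be the careful bookkeeping of which sets are open where and ensuring the transition map genuinely lands inside the region covered by Proposition \ref{Small ball near the border are homeomorphic} — i.e.\ choosing the radii $r$ and $\delta$ so that the homeomorphic image of a full Euclidean ball sits inside $\overline{int(\psi(W))} \cap \setB^d_\delta(\psi(x))$ as an honest open neighborhood of $\psi(x)$, and then reading off that $\psi(x)$ would be an interior point of $\bar{\mathbb{H}}^d \cap \setB^d_\delta$, which it is not. Once the setup is arranged, the contradiction is a one-line appeal to invariance of domain (or equivalently to the non-homeomorphism of $\setR^d$ and $\bar{\mathbb{H}}^d$ near a boundary point). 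An alternative, slightly more self-contained route would bypass Proposition \ref{Small ball near the border are homeomorphic} and argue directly that $\psi \circ \phi^{-1}$ would be an open map by invariance of domain, forcing $\psi(W)$ to be open and hence $\psi(x) \in int(\psi(W))$, contradicting that $\psi(x)$ is a boundary point; I would likely present whichever version keeps the exposition shortest given the lemmas already available.
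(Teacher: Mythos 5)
Your proposal is correct and follows essentially the same route as the paper: reduce to the local model of Proposition \ref{Small ball near the border are homeomorphic} and conclude via a local homology / invariance-of-domain argument that a neighborhood of a boundary point of $\bar{\mathbb{H}}^d$ cannot be homeomorphic to an open ball of $\setR^d$. Your alternative one-line variant (invariance of domain forces $\psi\circ\phi^{-1}$ to be an open map, so $\psi(x)\in int(\psi(W))$, contradicting that it is a boundary point) is also valid and arguably cleaner.
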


\begin{proof}
   It follows from Proposition \ref{Small ball near the border are homeomorphic} that for any border chart $(U,\phi)$ centered at a border point \( x \in \partial M \), there exists \( \delta > 0 \) such that the set \( int(\phi(U)) \cap \mathbb{B}_\delta \) is homeomorphic to a small relatively open ball centered at $0_d$ in the closed upper half-space. But the latter cannot be homeomorphic to a small relatively open ball centered at $0_d$ in the open upper half-space as can be shown by a classical relative homology argument.
\end{proof}

We are now in a position to introduce our definition of topological manifold with kinks.

\begin{definition}\label{d-dimensioanl atlas with kinks and maximal atlas}
     Let $M$ be a paracompact Hausdorff topological space. A $d$-dimensional atlas with kinks on $M$ is a collection $\{(U_i, \phi_i)\}_{i \in \mathcal{I}}$ of $d$-dimensional interior or border charts such that $X=\cup_{i \in \mathcal{I}}U_i$. Such an atlas is called maximal if it is not a proper subcollection of any other atlas. If $M$ admits a maximal $d$-dimensional atlas with kinks, then we say that $M$ is a $d$-dimensional topological manifold with kinks.
\end{definition}

The next proposition shows that, from a topological point of view, there is no difference between a manifold with kinks and a manifold with boundary, just like there is no topological difference between a manifold with corners and a manifold with boundary. The proof is an immediate consequence of Proposition \ref{Small ball near the border are homeomorphic} which implies that the codomain of border charts are all locally homeomorphic.

\begin{proposition}
    Any $d$-dimensional topological manifold with kinks $M$ is locally homeomorphic to a $d$-dimensional topological manifold with boundary.
\end{proposition}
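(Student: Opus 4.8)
The plan is to verify the local model at every point of $M$, splitting according to whether the point lies in $int(M)$ or in $\partial M$. These two sets are disjoint by the lemma just proved and cover $M$ by definition of a manifold with kinks, so it suffices to treat each case separately.

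At an interior point $x \in int(M)$, an interior chart $(U,\phi)$ is by definition a homeomorphism of the open neighborhood $U$ of $x$ onto the open subset $\phi(U)$ of $\setR^d$. Since $\setR^d$ is trivially a $d$-dimensional topological manifold with boundary (with empty boundary) and $\phi(U)$ is open in it, $U$ is homeomorphic to an open subset of a manifold with boundary; this case is immediate. At a border point $x \in \partial M$, take a border chart $(U,\phi)$ and set $\Omega \df int(\phi(U))$, an open subset of $\setR^d$ having $\phi(x)$ as a $\cC^0$ boundary point by Definition \ref{interior and border charts}. Applying Proposition \ref{Small ball near the border are homeomorphic} to $\Omega$ at $\phi(x)$ produces $\delta>0$ and a homeomorphism $\Psi : \bar{\mathbb{H}}^d \cap \setB^d_\delta \xrightarrow{\sim} \bar\Omega \cap \setB^d_\delta(\phi(x))$, the latter being an open neighborhood of $\phi(x)$ in $\bar\Omega$. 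One then transports $\Psi$ back through $\phi^{-1}$: after shrinking $\delta$ if necessary so that $\phi(\bar U)$ coincides with $\bar\Omega$ on $\setB^d_\delta(\phi(x))$ — which is where \eqref{eq:boundary_points} and local compactness of $\bar U$ come in — the set $W \df \phi^{-1}\big(\bar\Omega \cap \setB^d_\delta(\phi(x))\big)$ is an open neighborhood of $x$ in $M$, and $(\phi|_W)^{-1} \circ \Psi$ is a homeomorphism from $\bar{\mathbb{H}}^d \cap \setB^d_\delta$ — an open subset of the manifold with boundary $\bar{\mathbb{H}}^d$ — onto $W$. Hence every border point also has an open neighborhood homeomorphic to an open subset of a manifold with boundary.

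Combining the two cases, $M$ is locally homeomorphic to a $d$-dimensional topological manifold with boundary, which is the assertion. In fact, since $M$ is Hausdorff and paracompact, the argument exhibits $M$ itself as a topological manifold with boundary modeled on $\bar{\mathbb{H}}^d$, although only the local statement is claimed.

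The only step that needs genuine care is the bookkeeping in the border case: ensuring that the homeomorphism supplied by Proposition \ref{Small ball near the border are homeomorphic}, which a priori concerns $\bar\Omega = \overline{int(\phi(U))}$, actually descends through $\phi^{-1}$ to a bona fide open chart for $M$ around $x$ — i.e.\ that $\phi(\bar U)$ and $\overline{\Omega}$ agree near $\phi(x)$ after suitable shrinking. Everything else is a direct unwinding of the definitions of interior and border charts together with Proposition \ref{Small ball near the border are homeomorphic}.
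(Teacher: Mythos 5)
Your argument is correct and follows the same route as the paper, which simply invokes Proposition \ref{Small ball near the border are homeomorphic} to conclude that the codomains of border charts are locally homeomorphic to $\bar{\mathbb{H}}^d\cap\setB^d_\delta$; you merely spell out the interior/border case split and the transport through $\phi^{-1}$ that the paper leaves implicit. The bookkeeping point you flag (matching $\phi(\bar U)$ with $\overline{int(\phi(U))}$ near $\phi(x)$) is a reasonable extra precaution, not a departure from the paper's argument.
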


\subsection{Differentiable manifolds with kinks}

Let us now consider differentiable structures on topological manifolds with kinks. We let $k$ be a positive integer that we keep fixed for the whole subsection.

\begin{definition}\label{chart compatibility}
    Let $M$ be a topological manifold with kinks. We say that two charts $(U,\phi), (V,\psi)$ around $x\in M$ are $\cC^k$ compatible if the transition map  $\psi \circ \phi^{-1}:\phi(U\cap V)\to \psi(U\cap V)$ is a $\cC^k$  diffeomorphism in the sense of Definition \ref{Diffeomorphism of sets in ddim}.
    \end{definition}

\nd The previous definition is classical for interior points and becomes relevant for border points only. In particular, it allows to define the regularity of border points thanks to the next key result.
    
\begin{lemma}\label{prop:invariance of regularity of border points}
    Let $M$ be a topological manifold with kinks, and $x$ a border point of $M.$
    Let $(U,\phi), (V,\psi)$ be two $\cC^1$ compatible border charts around $x.$  Then $\phi(x)$ is a Lipschitz (resp.~CDD, $\cC^1$, cusp) boundary point of $\phi(U)$ if and only if it is a Lipschitz (resp.~CDD, $\cC^1$, cusp) boundary point of $\psi(V)$.
\end{lemma}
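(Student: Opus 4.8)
The plan is to reduce the statement to a Euclidean question about the chart transition and then handle the three regularity classes separately, using the description of boundary points via graphs, feasible-direction cones and Bouligand tangent cones from Sections~\ref{subsec:diffeo_inward}--\ref{sec:boundary_cones}. Passing to charts, write $\Omega_1,\Omega_2$ for the open Euclidean sets whose closures are the codomains of $\phi,\psi$, set $p_1\df\phi(x)\in\partial\Omega_1$, $p_2\df\psi(x)\in\partial\Omega_2$, and let $\Phi\df\psi\circ\phi^{-1}:\bar\Omega_1\to\bar\Omega_2$ be the transition map, which by Definition~\ref{chart compatibility} is a $\cC^1$ diffeomorphism in the sense of Definition~\ref{Diffeomorphism of sets in ddim} with $\Phi(p_1)=p_2$. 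By Lemma~\ref{lem:prop_diffeo}, $\Phi$ restricts to homeomorphisms $\Omega_1\to\Omega_2$ and $\partial\Omega_1\to\partial\Omega_2$, and its differential $L\df d_{p_1}\Phi$, being the restriction of $d_{p_1}\tilde\Phi$ for any $\cC^1$ extension $\tilde\Phi$ of $\Phi$, lies in $\mathrm{GL}(d,\setR)$ and maps $I_{p_1}\bar\Omega_1$ onto $I_{p_2}\bar\Omega_2$. Since $\Phi^{-1}$ is a $\cC^1$ diffeomorphism of the same type, it suffices to prove in each case the implication "if $p_1$ is Lipschitz (resp.\ CDD, $\cC^1$), then $p_2$ is Lipschitz (resp.\ CDD, $\cC^1$)". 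By Definition~\ref{C0 boundary pt}, up to a rigid motion on the source we may assume that $\bar\Omega_1$ and $\Omega_1$ agree near $p_1$ with $epi(\gamma_1)$ and $\mathring{epi}(\gamma_1)$ for the relevant $\gamma_1$; since $(p_1)_d=\gamma_1(p_1')$, we get $e_d\in\tilde F_{p_1}(\bar\Omega_1)\subset I_{p_1}\bar\Omega_1$, hence $Le_d\in I_{p_2}\bar\Omega_2$. The $\cC^1$ case is then quick: $\partial\Omega_1$ is near $p_1$ an embedded $\cC^1$ hypersurface, hence so is $\partial\Omega_2=\tilde\Phi(\partial\Omega_1)$ near $p_2$; by the implicit function theorem, after a rigid motion on the target sending the unit normal of $\partial\Omega_2$ at $p_2$ to $e_d$, it is the graph of a $\cC^1$ function $\gamma_2$, in particular $p_2$ is a $\cC^0$ boundary point, so Proposition~\ref{prop:at boundary of open sets, inward sector equals tangent cone} gives $Le_d\in I_{p_2}\bar\Omega_2\subset T^B_{p_2}\bar\Omega_2$; as this direction points to the $e_d$-side and $\Phi$ interchanges interiors and borders, both identities of \eqref{eq:boundary_points} hold for $\gamma_2$.

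For the Lipschitz case, let $\gamma_1$ be $L_1$-Lipschitz. Then every secant direction of $\partial\Omega_1$ near $p_1$ lies in the closed cone $\{u:|u_d|\le L_1\|u'\|\}$, which is bounded away from $\pm e_d$. Using the uniform first-order expansion $\tilde\Phi(a+v)=\tilde\Phi(a)+Lv+o(\|v\|)$ for $a$ near $p_1$, together with $\|\tilde\Phi(a)-\tilde\Phi(b)\|\asymp\|a-b\|$, the secant directions of $\partial\Omega_2=\tilde\Phi(\partial\Omega_1)$ near $p_2$ lie, up to a vanishing perturbation, in $L\{|u_d|\le L_1\|u'\|\}$, hence stay uniformly bounded away from $\pm Le_d$. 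Consequently the orthogonal projection onto the hyperplane $P\df(Le_d)^\perp$ is injective on $\partial\Omega_2$ near $p_2$, with Lipschitz inverse, and its image is open by invariance of domain; thus, in coordinates adapted to $P$, $\partial\Omega_2$ is the graph of a Lipschitz function $\gamma_2$. Because $Le_d\in I_{p_2}\bar\Omega_2$ and $\Phi$ interchanges interiors and borders, $\bar\Omega_2$ and $\Omega_2$ are the epigraph and strict epigraph of $\gamma_2$ near $p_2$; a final rigid motion aligning $Le_d$ with $e_d$ exhibits $p_2$ as a Lipschitz boundary point.

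For the CDD case --- in the LCDD setting relevant to the rest of the paper, so that the Lipschitz conclusion above applies and $\bar\Omega_2=epi(\gamma_2)$ near $p_2$ with $\gamma_2$ Lipschitz --- it remains to show that $\gamma_2$ is directionally differentiable at $p_2'$ with continuous directional derivative. Write $\sigma(y')\df\pi_P\bigl(\tilde\Phi(y',\gamma_1(y'))\bigr)$, a bi-Lipschitz homeomorphism near $p_1'$ with $\sigma(p_1')=p_2'$ and with $\gamma_2(\sigma(y'))$ equal to the remaining coordinate of $\tilde\Phi(y',\gamma_1(y'))$. The inner map $y'\mapsto(y',\gamma_1(y'))$ is directionally differentiable at $p_1'$ since $p_1$ is CDD, and $\tilde\Phi$ is Fr\'echet differentiable, so the chain rule (Fr\'echet $\circ$ directionally differentiable is directionally differentiable, with the expected derivative) gives that $\sigma$ and $\gamma_2\circ\sigma$ are directionally differentiable at $p_1'$, with derivatives $u'\mapsto\pi_PL(u',\gamma_1'(p_1';u'))$ and the corresponding formula for the $Le_d$-component of $L(u',\gamma_1'(p_1';u'))$, both continuous in $u'$ since $\gamma_1'(p_1';\cdot)$ is. An inverse-function-type argument for directionally differentiable homeomorphisms then transfers this to $\gamma_2$, yielding directional differentiability of $\gamma_2$ at $p_2'$ with directional derivative continuous in the direction. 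Alternatively, one invokes the naturality of the Bouligand tangent cone under the $\cC^1$ diffeomorphism $\Phi$, namely $T^B_{p_2}\bar\Omega_2=L\,T^B_{p_1}\bar\Omega_1=L\cdot epi(\gamma_1'(p_1';\cdot))$ by \eqref{eq:epi}, the right-hand side being the linear image of the epigraph of a continuous positively $1$-homogeneous function and hence the epigraph over $P$ of such a function, which one identifies with $\gamma_2'(p_2';\cdot)$ through \eqref{eq:epi} applied to $\Omega_2$.

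I expect the Lipschitz case to be the main obstacle: one must make the first-order expansion of $\tilde\Phi$ genuinely uniform near $p_1$ so that the ``horizontal'' secant cone is transported to $\partial\Omega_2$ (equivalently, that a slightly shrunk two-sided cone condition passes to $\bar\Omega_2$), and carefully track the rigid motions on source and target needed to present $\partial\Omega_2$ as a graph over $(Le_d)^\perp$ with $\bar\Omega_2$ on the $Le_d$-side. A secondary subtlety is the CDD step --- the chain rule for directional derivatives through a Fr\'echet-differentiable map, and the inverse-function-type passage from the boundary parametrisation $\sigma$ back to $\gamma_2$ --- which the tangent-cone route via \eqref{eq:epi} largely avoids once $p_2$ is known to be Lipschitz. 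The $\cC^1$ case, by contrast, is entirely routine.
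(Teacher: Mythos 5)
Your proposal follows the same route as the paper's proof --- reduce to the Euclidean transition map $\Phi=\psi\circ\phi^{-1}$, which by Lemma \ref{lem:prop_diffeo} extends to a $\cC^1$ diffeomorphism preserving interior and boundary, and then check that the graph presentation of the boundary retains its regularity class --- but where the paper dismisses the second step as ``immediate'' (composition of a $\cC^1$ map with a Lipschitz, resp.\ c.d.d., resp.\ $\cC^1$ map is again of that class), you correctly identify that the real content is not a composition at all: the image of a graph under a diffeomorphism is not a priori a graph over the same (or any fixed) hyperplane, and must be re-presented as one over $(Le_d)^\perp$ after a rigid motion. Your cone argument for the Lipschitz case (secant directions of $\partial\Omega_1$ confined to $\{|u_d|\le L_1\|u'\|\}$, transported by the uniform first-order expansion of $\tilde\Phi$ to a closed cone avoiding $\pm Le_d$, whence injectivity of $\pi_{(Le_d)^\perp}$ with Lipschitz inverse and openness of the image by invariance of domain) is exactly the argument the paper's one-liner elides, and your $\cC^1$ and epigraph-orientation steps are sound.

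Two caveats. First, you prove the c.d.d.\ case only under the additional Lipschitz hypothesis (LCDD), whereas the lemma lists ``c.d.d.''\ as a standalone alternative; your restriction is the reasonable one (the rest of the paper only ever invokes LCDD, and without a Lipschitz bound the graph-reprojection step genuinely fails), but strictly speaking it does not cover the literal statement --- nor, for that matter, does the paper's own argument. Second, your ``alternative'' route for the c.d.d.\ step via $T^B_{p_2}\bar\Omega_2=L\,T^B_{p_1}\bar\Omega_1$ has a mild circularity: identifying the resulting cone with $epi(\gamma_2'(p_2';\cdot))$ through \eqref{eq:epi} applied to $\Omega_2$ presupposes that $p_2$ is already LCDD, which is what is being proved. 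Your primary route (chain rule for directional derivatives through the Fr\'echet-differentiable $\tilde\Phi$, then inversion of the Lipschitz, Bouligand-differentiable boundary parametrisation $\sigma$) is the one to keep; the inversion step is a standard fact for locally Lipschitz maps whose directional derivative is an invertible positively homogeneous map, but it deserves the explicit citation or proof you gesture at rather than being left as an ``inverse-function-type argument.''
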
    

\begin{proof}
    For Lipschitz, CDD and $\cC^1$ boundary points, this is immediate from noticing that any transition map $\psi \circ \phi^{-1}:\phi(U\cap V)\to \psi(U\cap V)$ extends to a local $\cC^1$ diffeomorphism $F$ which preserves boundary (Lemma \ref{lem:prop_diffeo}), and that the composition of a $\cC^1$ map with a Lipschitz (resp.~CDD, $\mathcal{C}^1$) map is Lipschitz (resp.~CDD, $\mathcal{C}^1$). For cusps, the result follows from two facts : the push-forward of $\cL^d$ through $F$ is mutually absolutely continuous with $\cL^d$, and the Bouligand tangent cone $T_{\varphi(x)}\phi(\bar{U})$ is mapped to $T_{\psi(x)}\psi(\bar{V})$ by $F$.
\end{proof}

\nd Note that the previous claim fails if the transition maps are Hölder continuous.\\

We can now define differentiable manifolds with kinks.

\begin{definition}\label{d-dimensional differentiable atlas with kinks}
 Let $M$ be a topological manifold with kinks. A $d$-dimensional $\cC^k$ atlas with kinks on $M$ is a collection $\{(U_i, \phi_i)\}_{i \in \mathcal{I}}$ of pairwise $\cC^k$ compatible $d$-dimensional interior or border charts such that $X=\cup_{i \in \mathcal{I}}U_i$. Such an atlas is called maximal if it is not a proper subcollection of any other atlas. If $M$ admits a maximal $d$-dimensional $\cC^k$ atlas with kinks, then we say that $M$ is a $d$-dimensional $\cC^k$ manifold with kinks.
\end{definition}

The word ``kink'' suggests particular boundary points where the manifold presents some type of singularity. With this in mind, we classify border points of $\cC^1$ manifolds with kinks as follows.

\begin{definition}
    Let $M$ be a $\cC^1$ manifold with kinks.

\begin{enumerate}[label=(\roman*)]
    \item We say that $x \in \partial M$ is a $\cC^1$-boundary point of $M$ if there exists a local border chart $(U,\phi)$ around $x$ such that $\phi(x)$ is a $\cC^1$ boundary point of $int(\phi(U))$. Otherwise, we say that $x$ is an essential kink.
    
    \item We say that an essential kink $x \in \partial M$ is an essential corner of depth $k \in \{2,\ldots,d\}$ if there exists a border chart $(U,\phi)$ centered at $x$ such that, up to a rigid motion, the image $\phi(U)$ locally writes as $\setR^{d}_k$.
    
    \item We say that an essential kink $x\in \partial M$ is an LCDD (resp.~Lipschitz) border point if there exists a border chart $(U,\phi)$ centered at $x$ such that $0_d$ is an LCDD (resp.~Lipschitz) boundary point of $\phi(\bar{U})$.

     \item We say that an essential kink $x\in \partial M$ is a cusp if there exists a border chart $(U,\phi)$ centered at $x$ such that $\cL^d(T_{0_d}\phi(\bar{U}))=0$.
\end{enumerate}
Finally, we say that $M$ has an LCDD (resp.~Lipschitz) border if the essential kinks in $\partial M$ are all LCDD (resp.~Lipschitz).
\end{definition}

\begin{remark}
    It is worth pointing out that essential corners are Lipschitz border points, as the set $\setR^d_k$ is the epigraph of a Lipschitz function $\gamma : H \to \setR$ where $H$ is a $(d-1)$-dimensional subspace of $\setR^d$. Indeed, consider the vector $n = (1,\ldots,1) \in \setR^k$ and its scaled version $\hat{n}=n/\|n\| \in \mathbb{S}^{k-1} \subset \setR^k$. Then $P \df \hat{n}^\perp$ is a $(k-1)$-dimensional subspace of $\setR^k$, and any $x \in \setR^k$ writes as $\xi + t \hat{n}$ with $\xi = (\xi_1,\ldots,\xi_{k-1}) \in P$ and $t \in \setR$, where the coordinates of $\xi$ are taken with respect to any orthonormal basis of $P$. Therefore
    \[
    x = \xi + t \hat{n} \in \setR_k^k \quad \iff  \quad t \ge - \sqrt{k} \min_{1 \le i \le d-1} \xi_i =: g_{k,k}(\xi).
    \]
    In other words, $\setR_k^k = epi (g_{k,k})$, and $g_{k,k}$ is obviously Lipschitz because it is the pointwise minimum of a family of Lipschitz functions. Setting
    \[
    g_{d,k} (\zeta) := g_{k,k}(\pi_P(\zeta)) \quad \text{ for any $\zeta \in P \oplus \setR^{d-k}$},
    \]
    where $\pi_P$ is the Euclidean orthogonal projection onto $P$, we get that $\setR_k^d = epi (g_{d,k})$, and $g_{d,k}$ is Lipschitz too.
\end{remark}

\vspace{2mm}  

Note that, according to our definition, any point on the boundary of a smooth manifold with boundary is a $\cC^1$ boundary point, and any corner on a manifold with corner is an essential corner. But the notion of essential kink captures wilder singularities, as illustrated below. Therefore, the category of manifolds with kinks is strictly larger than the ones of manifolds with corners, of manifolds with boundary, and of manifolds without boundary, and contains them all.

\begin{examples}
The following examples are displayed in Figure \ref{boundaries}.
\begin{enumerate}[label=(\roman*)]
    \item The pyramid $M:=\{(x,y,z)\in \cl^3: z\ge \max(|x|,|y|)\}$ is a manifold with kinks that is not a manifold with corner, because the point $0_3\in \partial M$ is an essential kink which is not a corner.
    \item The epigraph $M:=\{(x,y)\in \cl^2: y < \sqrt{|x|} \}$ is a manifold with kinks that is not not a manifold with corner, because the point $0_2\in \partial M$ is a cusp.
\end{enumerate}

\begin{figure}[!h] \centering \begin{subcaptionblock}{.4\textwidth} \centering \includegraphics[scale=0.5]{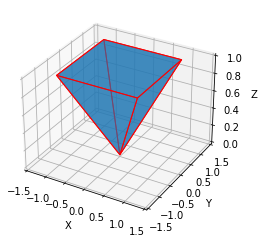} \caption{Pyramid} \end{subcaptionblock}
\begin{subcaptionblock}{.4\textwidth} \centering \includegraphics[scale=0.35]{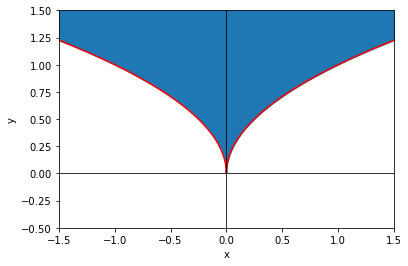} \caption{Cusp}\end{subcaptionblock}
\caption{Essential kinks that are not essential corners}\label{boundaries} \end{figure}
\end{examples}

We conclude this section with a definition regarding non-fluctuating border points which is the analogue of Definition \ref{def:domain with non-fluctuating boundary} in the context of manifolds with kinks.

\begin{definition}\label{def:non-fluctuating border point}
Let $M$ be a $\cC^1$ manifold with kinks. Then $x \in \partial M$ is called a (a.e.) non-fluctuating border point if for any chart $(U,\phi)$ centered at $x$, the set $int(\phi(U))$ has (a.e.) non-fluctuating boundary at $0_d.$  
\end{definition}

\begin{remark}
Any LCDD border point is an a.e.~non-fluctuating border point. This follows from Lemma \ref{lem:negligible}.
\end{remark}

\subsection{Tangent space and inward sector}\label{subsec:tangent}

In Subsection \ref{sec:boundary_cones}, we recalled the concepts of Bouligand tangent cone, feasible direction cone and open feasible direction cone at a point of a subset of $\ddim$, and we derived some results on these cones at suitably regular boundary points. The goal of the present subsection is to introduce the corresponding notions for smooth manifolds with kinks and to derive peculiar properties at essential kinks.

\subsubsection{Tangent space}

\nd From now on, we work only with smooth manifolds with kinks, i.e.~those manifolds with kinks for which the transition maps are all $\cC^{\infty}$. This is to make sure that the tangent space defined below as the space of smooth derivations (i.e.~linear forms on the space of smooth functions) is finite dimensional. Indeed, for any $\cC^k$ manifold with $k < +\infty$, the space of $\cC^k$ derivations is infinite dimensional : see e.g.~\cite[Theorem 4.2.41]{AMR}.

In order to introduce smooth derivations on manifolds with kinks, we must first define what a smooth function is. This is what we do in the next definition, largely inspired by the context of manifolds with corners (see e.g.~\cite[Definition 2.2]{Joyce}). 

\begin{definition}\label{def:smooth functions}
Let $M$ be a $d$-dimensional smooth manifold with kinks. For any open set $O \subset M$, a function $f:O\to \cl$ is called smooth in a neighborhood of $x\in O$ if it is smooth in every chart $(U,\phi)$ 
 centered at $x$, namely :
 \begin{enumerate}[label=(\roman*)]
     \item if $x \in int (M)$, then there exists $\rho>0$ such that $f\circ \phi^{-1} : \phi(U) \cap \mathbb{B}_\rho^d \to \setR$ is smooth in the classical sense,
     \item if $x \in \partial M$, then there exist $\rho>0$ and an open neighborhood $V\subset \ddim$ of $0_d$ containing $\phi(\bar{U}) \cap \mathbb{B}_\rho^d$ to which $f\circ \phi^{-1} : \phi(\bar{U}) \cap \mathbb{B}_\rho^d \to \setR$ extends to a smooth function.
 \end{enumerate}
 We denote by $\cC^\infty(O)$ the space of smooth functions on $O$, that is to say, the functions which are smooth at any $x \in O$.
\end{definition}

\begin{remark}
    If $x \in \partial M$ and $f\circ \phi^{-1} : \phi(\bar{U}) \cap \mathbb{B}_\rho^d \to \setR$ admits Taylor polynomials of arbitrary high order, then the existence of a smooth extension of $f\circ \phi^{-1}$ to $\setR^d$ is ensured by Whitney's extension theorem \cite[Theorem I]{Whitney}, see also \cite[VI.2.]{Stein}. Note that in the setting of manifolds with corners, the latter upgrades into the well-posedness of Seeley's linear extension operator \cite{Seeley}, see \cite[Section 1.4]{Melrose}.
\end{remark}

We are now in a position to define smooth derivations and tangent spaces as follows.

\begin{definition}\label{def:tangent space} \label{def:tangent sp at a kink}
     Let $M$ be a smooth manifold with kinks. Then the tangent space of $M$ at $x \in M$ is the real vector space
     \[
     T_xM \df \{\mathcal{D}: \cC^{\infty}(M)\to \cl \text{ linear }: \mathcal{D}(fg)= f(x) \mathcal{D}(g) + g(x)\mathcal{D}(f)  \text{ for any $f,g \in \cC^{\infty}(M)$}\}.
     \]
     Any element $\mathcal{D} \in T_xM$ is called a smooth derivation at $x$. 
\end{definition}

We can now define the differential of a smooth function between smooth manifolds with kinks.  There is no change with the case of classical manifolds, see e.g.~\cite[p.55]{Lee}.

\begin{definition}\label{def:diffeoMFK}
Let $M$ and $N$ be smooth manifolds with kinks of dimension $d$ and $d'$ respectively, and $O\subset M$ an open set.
\begin{enumerate}[label=(\roman*)]
    \item  A function $\Phi:O\to N$ is called smooth in a neighborhood of $x\in O$ if its local expression is smooth in any couple of charts $(U,\phi)$  and $(U',\psi)$ centered at $x$ and $\Phi(x)$ respectively, namely:
 \begin{enumerate}[label=(\roman*)]
     \item if $x \in int (M)$, then there exists $\rho>0$ such that $\psi \circ f\circ \phi^{-1} : \phi(U) \cap \mathbb{B}_\rho^d \to \setR^{d'}$ is smooth in the classical sense,
     \item if $x \in \partial M$, then there exist $\rho>0$ and an open neighborhood $V\subset \ddim$ of $0_d$ containing $\phi(\bar{U}) \cap \mathbb{B}_\rho^d$ to which $\psi \circ f\circ \phi^{-1} : \phi(\bar{U}) \cap \mathbb{B}_\rho^d \to \setR$ extends to a smooth function.
 \end{enumerate}
 We denote by $\cC^\infty(O,N)$ the space of functions $O\to N$ which are smooth in a neighborhood of any $x \in O$.

    \item For any $\Phi \in \cC^\infty(O,N)$, the differential of $\Phi$ at $x \in M$ is the linear map
\[
\di_x \Phi : T_x M \to T_{\Phi(x)}N
\] 
sending a derivation $\cD \in T_xM$ to the derivation $\di_x \Phi(\cD)  \in T_{\Phi(x)}N$ defined by :
\[
\di_x \Phi(\cD)( h) \df \cD(h\circ \Phi) \qquad \forall h \in \cC^\infty(N).
\]
 \item A function $\Phi \in \cC^\infty(O,N)$ is called a smooth diffomorphism onto its image if it is a smooth bijection with smooth inverse.
\end{enumerate}
\end{definition}

\begin{remark}
One can check with no harm that the usual chain rule holds in this context.
\end{remark}

Let us now establish the following natural result.

\begin{lemma}\label{prop:dimension of tangent space}
    Let $M$ be a smooth manifold with kinks and $x\in \partial M.$ For any local chart $(U,\phi)$ centered at $x$, the differential $$\di_x\phi:T_x M \to T_{0_d}\phi(\bar{U})$$ is a linear isomorphism, and $dim(T_xM)=dim(M).$
\end{lemma}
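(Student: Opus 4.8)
The plan is to reduce the statement to the corresponding fact for Euclidean subsets via the chart, exactly as one does for ordinary manifolds, the only subtlety being that $\phi(\bar U)$ is the closure of an open set with a $\cC^0$ boundary point at $0_d$ rather than a genuine manifold with boundary. First I would reduce to the case $M = \phi(\bar U) =: \bar\Omega$ with $\Omega \subset \setR^d$ open, $0_d$ a $\cC^0$ boundary point of $\Omega$, since a chart induces an algebra isomorphism $\cC^\infty(M) \to \cC^\infty(\bar\Omega)$ by Definition \ref{def:smooth functions}, hence a linear isomorphism on derivations, which is precisely $\di_x\phi$ by the chain rule; so it suffices to compute $T_{0_d}\bar\Omega$.

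Next I would show $\dim(T_{0_d}\bar\Omega) = d$ by exhibiting the usual basis. The partial derivatives $\partial/\partial y_i|_{0_d}$ clearly define elements of $T_{0_d}\bar\Omega$ (they act on a smooth function on $\bar\Omega$ via any smooth extension to a neighborhood of $0_d$ in $\setR^d$, and the value is independent of the extension since two extensions agree on $\bar\Omega$, which contains interior points arbitrarily close to $0_d$ by Lemma \ref{prop:open feasible direction cone non-empty}, so their derivatives at $0_d$ along any direction in $\tilde F_{0_d}(\bar\Omega)$, and hence along all of $\setR^d$ by linearity, coincide). They are linearly independent because the coordinate functions $y_1,\dots,y_d$ (restricted to $\bar\Omega$) are detected: $(\partial/\partial y_i)(y_j) = \delta_{ij}$. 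For spanning, given $\cD \in T_{0_d}\bar\Omega$, set $a_i \df \cD(y_i)$ and argue $\cD = \sum_i a_i\,\partial/\partial y_i$: first $\cD(1)=0$ from the Leibniz rule, so $\cD$ kills constants; then for $f \in \cC^\infty(\bar\Omega)$ pick a smooth extension $\tilde f$ to a ball $\setB^d_\rho$ and use the first-order Taylor expansion with integral remainder $\tilde f(y) = \tilde f(0) + \sum_i y_i\, g_i(y)$ with $g_i \in \cC^\infty(\setB^d_\rho)$, $g_i(0) = \partial_i\tilde f(0)$; restricting to $\bar\Omega \cap \setB^d_\rho$ this is an identity of smooth functions on (a neighborhood basis at $0_d$ in) $\bar\Omega$, so applying $\cD$ and the Leibniz rule gives $\cD(f) = \sum_i a_i\, g_i(0) = \sum_i a_i\, \partial_i\tilde f(0)$, which is independent of the extension and equals $\big(\sum_i a_i\,\partial/\partial y_i\big)(f)$. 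A minor point to address is that derivations are defined on globally smooth functions $\cC^\infty(M)$, not germs, so I would invoke a bump function supported near $x$ (available by paracompactness, and constructible in the chart since $\bar\Omega \cap \setB^d_\rho$ is the trace of a Euclidean neighborhood) to see that $\cD$ depends only on the germ of $f$ at $x$, which legitimizes the localization above.

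Finally, combining the chart isomorphism with $\dim(T_{0_d}\phi(\bar U)) = d = \dim(M)$ yields both claims: $\di_x\phi$ is a linear isomorphism (injective and surjective onto a $d$-dimensional space, being the restriction-of-derivations map induced by the algebra isomorphism $\cdot\circ\phi^{-1}$), and $\dim(T_xM)=d$. The main obstacle is the locality/extension bookkeeping in the non-open setting: one must be careful that the value $\cD(f)$ only depends on the restriction of (an extension of) $f$ to an arbitrarily small trace neighborhood $\bar\Omega\cap\setB^d_\rho$, and that Hadamard's lemma applied to an extension descends to such traces — this works precisely because, by Lemma \ref{prop:open feasible direction cone non-empty} (or Lemma \ref{prop:at boundary of open sets, inward sector equals tangent cone}), $\bar\Omega$ is ``fat'' at $0_d$ (it contains a full-dimensional cone of interior directions), so no directional information is lost and extensions are, in the relevant sense, unique up to the needed order.
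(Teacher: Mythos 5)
Your route is the same as the paper's: use the chart (equivalently, the pullback of function algebras together with the chain rule applied to $\phi\circ\phi^{-1}=\mathrm{id}$) to reduce to showing that the derivations at the $\cC^0$ boundary point $0_d$ of $\Omega:=int(\phi(U))$ form a $d$-dimensional space spanned by the partial derivatives. The paper simply asserts this last fact, so your extra detail (well-definedness via extensions, independence on the coordinate functions, spanning via Hadamard's lemma, and germ-locality via bump functions) is exactly the content that needs checking.

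One of your justifications, however, is wrong as stated. To show that $\partial/\partial y_i|_{0_d}$ does not depend on the chosen extension, you argue that two extensions have equal directional derivatives at $0_d$ along every $v\in \tilde{F}_{0_d}(\bar{\Omega})$ ``and hence along all of $\setR^d$ by linearity'', and you close by asserting that $\bar{\Omega}$ ``contains a full-dimensional cone of interior directions''. Lemma \ref{prop:open feasible direction cone non-empty} only guarantees $\tilde{F}_{0_d}(\bar{\Omega})\neq\emptyset$, and at a cusp — which the class of manifolds with kinks explicitly allows, cf.\ Definition \ref{def:cusp}, where $int\, I_xM=\emptyset$ — the feasible cone does not span $\setR^d$: for $\Omega=\mathring{epi}(\|y'\|^{1/2})$ one finds $\tilde{F}_{0_d}(\bar{\Omega})=\{(0_{d-1},s):s>0\}$, a single ray, so linearity gives no control on $\partial/\partial y_i$ for $i<d$. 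The step is rescued by a simpler argument that works at every $\cC^0$ boundary point: if $h$ is the difference of two extensions, then $h\equiv 0$ on the open set $\Omega$, hence $\nabla h\equiv 0$ on $\Omega$, and since $0_d\in\overline{\Omega}$ and $\nabla h$ is continuous, $\nabla h(0_d)=0$. This yields first-order uniqueness of extensions — all that your Hadamard/spanning computation actually uses, since only $g_i(0)=\partial_i\tilde{f}(0)$ enters after applying the Leibniz rule. With that replacement your proof is complete and matches the paper's (terser) argument.
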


\begin{proof}
    When $x$ is an interior point, a $\cC^1$ boundary point or an essential corner, this is already known, see \cite{Joyce}. Let us then assume that $x$ is an essential kink which is not an essential corner. Consider a local chart $(U,\phi)$ centered at $x$. Like for manifolds without boundary, we obtain from the chain rule applied to the identities $\phi \circ \phi^{-1} = \mathrm{id}_{\phi(U)}$ and $\phi^{-1} \circ \phi = \mathrm{id}_{U}$ that the differential $\di_x\phi:T_x M \to T_{0_d}\phi(\bar{U})$ is a linear isomorphism with inverse $\di_{0_d}\phi^{-1}$. But the space of derivations at the $\cC^0$ boundary point $0_d$ of $\phi(U)$ coincides with $\setR^n$, since a basis of this space is given by the classical partial differential operators $\partial \cdot /\partial x_1,\ldots, \, \partial\cdot /\partial x_n$ defined on the open set $\phi(U)$ and naturally extended to any open set $V \subset \setR^d$ containing $\overline{\phi(U)}$.
\end{proof}

\begin{remark}
    Like for manifolds without/with boundary or corners, the preceding lemma and its proof show that any local chart $(U,\phi)$ centered at a point $x$ in a manifold with kinks provides a linear isomorphism $T_xM \simeq \setR^d$.
\end{remark}

\subsubsection{Inward sector}

Let us now introduce the notion of inward tangent sector for smooth manifolds with kinks.  This is analogous to the inward tangent sector of Euclidean domains as discussed in Section \ref{subsec:diffeo_inward}. Our definition builds upon the classical characterization of the tangent space in terms of initial velocities of curves, which holds true for manifolds without/with boundary (see e.g.~\cite[p.~68--70]{Lee}). For manifolds with boundary, one can identify inward tangent vectors at boundary points by specifying the domain of the curves we choose. This is how we came up with the following natural definition.

\begin{definition}\label{def: inward sector of the tangent space}
    Let $M$ be a smooth manifold with kinks. For any $x \in M$, we define the equivalence relation $\sim$ on the set of smooth curves $c : I \to M$ such that $I=[0,\epsilon)$ for some $\epsilon >0$ and $c(0)=x$ by setting :
    \begin{equation}\label{eq:equiv}
    c_1 \sim c_2 \iff (f \circ c_1)'(0)= (f\circ c_2)'(0) \, \, \forall f \in C^\infty(M).
    \end{equation}
    Then the inward tangent sector of $M$ at $x$ is the space of equivalent classes of such curves under $\sim$ :
    \[
    I_xM := \{c \colon I \to M \text{ smooth such that $I=[0,\epsilon)$ for some $\epsilon > 0$ and $c(0)=x$}\}/\sim .
    \]
\end{definition}

 It follows from \eqref{eq:equiv} that any element $[c] \in I_xM$ canonically defines a smooth derivation $f \mapsto (f\circ c)'(0)$ belonging to $T_xM$. As such, \begin{equation}\label{eq:inclusion}
I_xM \subset T_xM.
     \end{equation}
The converse is obvious when $x \in int M$. We discuss the case $x \in \partial M$ in Lemma \ref{lem:chara_border} below.
     
We shall use the natural convention which denotes equivalent classes $[c]$ as $c'(0)$, and think of these objects as initial velocities pointing towards the interior of $M$.

\begin{lemma}\label{prop:inward sectors correspond to inward sectors via charts}
Let $M$ be a smooth manifold with kinks and $(U,\phi)$ a local chart centered at some $x \in \partial M$. 
    Then for any $y \in U \cap \partial M$, $$I_yM=[\di_{y}\phi]^{-1}(I_{\phi(y)}\phi(\bar{U})).$$ 
\end{lemma}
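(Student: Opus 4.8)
The strategy is to prove the two inclusions separately, using the defining property of the inward sector via curves together with the fact that $\phi$ restricts to a diffeomorphism $\bar U \to \phi(\bar U)$ between closures of open sets (Lemma \ref{lem:prop_diffeo}) and that $\di_y\phi : T_yM \to T_{\phi(y)}\phi(\bar U)$ is a linear isomorphism (Lemma \ref{prop:dimension of tangent space}). First I would unwind definitions on both sides: an element of $I_yM$ is an equivalence class $[\gamma]$ of smooth curves $\gamma : [0,\epsilon) \to M$ with $\gamma(0)=y$, and by \eqref{eq:equiv} it is determined by the derivation $f \mapsto (f\circ\gamma)'(0)$; similarly $I_{\phi(y)}\phi(\bar U)$ consists of initial velocities $c'(0)$ of $\cC^1$ curves $c : (-\delta,\delta) \to \setR^d$ with $c(0)=\phi(y)$ and $c([0,\delta)) \subset \phi(\bar U)$, which is the same (since $\phi(\bar U) = \overline{\mathring{epi}(\gamma_0)}$ locally) as the inward sector of the closure of the open set $int(\phi(U))$.

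For the inclusion $I_yM \subset [\di_y\phi]^{-1}(I_{\phi(y)}\phi(\bar U))$, I would take $[\gamma] \in I_yM$ and push it forward: $c \df \phi \circ \gamma$ is a $\cC^1$ curve (in the sense of Definition \ref{Diffeomorphism of sets in ddim}, using a smooth extension $\tilde\phi$ of $\phi$ to an open neighborhood) with $c(0) = \phi(y)$ and $c([0,\epsilon)) \subset \phi(\bar U)$, hence $c'(0) \in I_{\phi(y)}\phi(\bar U)$; the chain rule gives $c'(0) = \di_y\phi([\gamma])$ under the identification $T_{\phi(y)}\phi(\bar U) \simeq \setR^d$, so $[\gamma] \in [\di_y\phi]^{-1}(I_{\phi(y)}\phi(\bar U))$. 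For the reverse inclusion, I would take $v \in I_{\phi(y)}\phi(\bar U)$, realized as $v = c'(0)$ for a $\cC^1$ curve $c$ with $c([0,\delta)) \subset \phi(\bar U)$, and set $\gamma \df \tilde\phi^{-1} \circ c$, where $\tilde\phi^{-1}$ is (the restriction of) a smooth extension of $\phi^{-1}$; then $\gamma : [0,\delta') \to M$ is smooth in the manifold-with-kinks sense, $\gamma(0) = y$, $\gamma([0,\delta')) \subset U \cap \partial$-neighborhood of $M$ (so $\gamma$ lands in $M$), and by the chain rule $\di_y\phi([\gamma]) = c'(0) = v$, whence $[\gamma] \in I_yM$ maps to $v$.

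The one subtlety — and the main thing to be careful about — is matching the \emph{curve regularity conventions}: on the manifold side curves are required to be smooth ($\cC^\infty$), whereas the Euclidean inward sector $I_{\phi(y)}\phi(\bar U)$ as defined in Section \ref{subsec:diffeo_inward} uses merely $\cC^1$ curves. In the direction $I_yM \to I_{\phi(y)}\phi(\bar U)$ this is harmless (smooth pushes to $\cC^1$), but in the reverse direction one must produce a \emph{smooth} manifold-curve from a merely $\cC^1$ Euclidean curve; this is resolved by noting that $I_{\phi(y)}\phi(\bar U)$ can equivalently be described via smooth curves — given $v \in int(\tilde F_{\phi(y)}(\phi(U)))$ one simply uses the straight segment $t \mapsto \phi(y) + tv$, and given $v \in \partial(T^B_{\phi(y)}\phi(\bar U))$ one uses Proposition \ref{prop:at boundary of open sets, inward sector equals tangent cone} together with \eqref{eq:epi}, which exhibits $v$ as $c'(0)$ for a curve lying in $\partial(int\,\phi(U))$ that can be taken $\cC^1$, and then one invokes that $T^B_{\phi(y)}\phi(\bar U) = I_{\phi(y)}\phi(\bar U)$ in the LCDD case — but since the lemma is stated for general border points $x$, I would instead argue directly that any $\cC^1$ Euclidean curve realizing an element of the inward sector can be reparametrized/smoothed near $0$ without changing its initial velocity, e.g.\ by precomposing with a smooth bijection $[0,\delta'') \to [0,\delta')$ tangent to the identity at $0$, which keeps the image inside $\phi(\bar U)$ and preserves $c'(0)$. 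With this reconciliation in place, both inclusions follow from the chain rule and the bijectivity of $\di_y\phi$, so the remaining work is purely bookkeeping.
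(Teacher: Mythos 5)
Your strategy coincides with the paper's: push curves forward and pull them back through the chart, invoke the chain rule and the fact that $\di_y\phi$ is a linear isomorphism, and the paper likewise handles the forward direction by extending $\phi\circ\gamma$ from $[0,\epsilon)$ to a two-sided $\cC^1$ curve (by symmetrization about $\phi(y)$) so as to match the Euclidean definition of the inward sector, then dispatches the converse ``along similar lines.'' You deserve credit for explicitly flagging the regularity mismatch (smooth manifold curves versus merely $\cC^1$ Euclidean curves), which the paper glosses over; however, your proposed repair does not work as stated: precomposing a $\cC^1$ curve $c$ with a smooth bijection $[0,\delta'')\to[0,\delta')$ tangent to the identity yields a curve that is still only $\cC^1$ --- composition cannot raise the regularity of the outer map --- so this does not produce the smooth curve $\gamma:[0,\epsilon)\to M$ required by Definition \ref{def: inward sector of the tangent space}. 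A genuine fix must either replace the $\cC^1$ realization of $v\in I_{\phi(y)}\phi(\bar U)$ by a smooth curve with the same initial velocity whose forward image still lies in $\phi(\bar U)$ (as you suggest via straight segments for interior directions and the epigraph description for boundary directions in the LCDD case), or weaken the curve class in the manifold-side definition; as written, this step is the one gap in your argument, though it is a gap the paper's own one-line treatment of $\supset$ shares.
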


\begin{proof}
Let us establish $\subset$. If $c'(0) \in I_yM$ for some smooth $c : [0,\epsilon) \to M$ such that $c(0)=y$, extend $\tilde{c}\df \phi \circ c : [0,\epsilon) \to \phi(\bar{U})$ to a $\cC^1$ curve $\bar{c} : (-\epsilon,\epsilon) \to \phi(\bar{U})$ in any way, for example by symmetrizing $\tilde{c}$ with respect to $\phi(y)$. The chain rule yields that $\bar{c}'(0) = d_y \phi(\tilde{c}'(0))$ so that $\tilde{c}'(0) = [d_y \phi]^{-1}(\bar{c}'(0)) = [d_{\phi(y)} \phi^{-1}](\bar{c}'(0))$. Since $\bar{c}'(0) \in I_{\phi(y)} \phi(U)$ we get that $\tilde{c}'(0) \in [d_{\phi(y)} \phi^{-1}] (I_{\phi(y)} \phi(U))$ as desired. The converse inclusion $\supset$ is proved along similar lines that we skip for brevity.
\end{proof}

\begin{remark}\label{rem:pre-distinction}
The same proof shows that if $(U,\phi)$ is a local chart centered at some interior point $x \in M$, then $I_y M = [\di_{y}\phi]^{-1}(I_{\phi(y)}\phi(U))$ for any $y \in U$. Since in this case $\phi(y)$ belongs to the open set $\phi(U)$, the inward tangent sector $I_{\phi(y)}\phi(U)$ clearly coincides with $\setR^d$, hence we get $I_yM = T_yM$. The same holds when $(U,\phi)$ is a local chart centered at a border point  $x$ and $y$ belongs to $U \backslash \partial M$.
\end{remark}

We are now in a position to characterize border points in terms of their inward tangent sector.

\begin{lemma}\label{lem:chara_border}
Let $M$ be a smooth manifold with kinks, and $x \in M$. Then the following holds, where $\simeq$ means that there exists a bijection that preserves multiplication by positive real numbers.
\[
\begin{array}{lll}
    x \in int(M)  & \iff &  I_xM \simeq T_xM.\\
    \text{$x$ is a $\cC^1$ boundary point of $M$}  & \iff & I_xM \simeq \mathbb{H}^d.\\
    \text{$x$ is an essential corner of depth $k$ of $M$}  & \iff & I_xM \simeq \setR^d_k.
\end{array}
\]
\end{lemma}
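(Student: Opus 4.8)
The plan is to reduce everything to the Euclidean statements already proved in Subsections \ref{subsec:diffeo_inward} and \ref{sec:boundary_cones} via a local chart, using Lemma \ref{prop:inward sectors correspond to inward sectors via charts} to transport the inward sector and Lemma \ref{prop:dimension of tangent space} to identify $T_xM$ with $\setR^d$. So fix $x \in M$ and a local chart $(U,\phi)$ centered at $x$; write $\Omega \df int(\phi(U))$ and $y \df \phi(x) = 0_d$. Under the linear isomorphism $\di_x\phi : T_xM \xrightarrow{\sim} \setR^d$, Lemma \ref{prop:inward sectors correspond to inward sectors via charts} says $\di_x\phi(I_xM) = I_{0_d}\overline{\Omega}$ when $x \in \partial M$ (and $I_xM = T_xM$ when $x \in int M$, handled by the remark after that lemma). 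So the three equivalences become, respectively: $x \in \partial M \iff I_{0_d}\overline{\Omega} \neq \setR^d$; $\phi(x)$ is a $\cC^1$ boundary point $\iff I_{0_d}\overline{\Omega}$ is a half-space; $\phi(x)$ is an essential corner of depth $k$ $\iff I_{0_d}\overline{\Omega} \simeq \setR^d_k$. The ``$\simeq$'' should be read as ``equals after a linear change of coordinates'', which is harmless since we may always post-compose the chart with a rigid motion.

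For the first equivalence: if $x \in int M$ we already noted $I_xM = T_xM$, so the contrapositive of ``$\Leftarrow$'' holds; conversely if $x \in \partial M$ then $\phi(x)$ is a $\cC^0$ boundary point of $\Omega$, and I want $I_{0_d}\overline{\Omega} \subsetneq \setR^d$. By Proposition \ref{prop:at boundary of open sets, inward sector equals tangent cone}, $I_{0_d}\overline{\Omega} \subset T^B_{0_d}(\overline{\Omega})$, so it suffices that the Bouligand tangent cone is not all of $\setR^d$; since $0_d \in \partial\Omega$, there is a sequence in $\setR^d \setminus \overline{\Omega}$ converging to $0_d$, and a limiting direction of it cannot lie in $T^B_{0_d}(\overline{\Omega})$ (more carefully: $-e_d$-type argument using the graph description — near $0_d$, $\overline{\Omega} = epi(\gamma)$, so any $v$ with $v_d < \min(\gamma'(0;v'), \text{something})$ fails to be a tangent direction; it is cleanest to note directly that $I_{0_d}\overline{\Omega}$, being the set of initial velocities of $\cC^1$ curves staying in $epi(\gamma)$, cannot contain both $v$ and $-v$ for $v$ transverse to the graph, whereas $T_xM$ does). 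For the second equivalence: if $\phi(x)$ is a $\cC^1$ boundary point, the local graph function $\gamma$ is $\cC^1$, hence in particular LCDD, so Proposition \ref{prop:at boundary of open sets, inward sector equals tangent cone} gives $I_{0_d}\overline{\Omega} = T^B_{0_d}(\overline{\Omega}) = epi(\gamma'(0_{d-1};\cdot))$ by \eqref{eq:epi}; and $v' \mapsto \gamma'(0_{d-1};v')$ is linear when $\gamma$ is differentiable, so its epigraph is a half-space. Conversely, if $I_{0_d}\overline{\Omega}$ is a half-space, then $T^B_{0_d}(\overline{\Omega}) = epi(\gamma'(0_{d-1};\cdot))$ is a half-space, forcing $\gamma'(0_{d-1};\cdot)$ to be linear; the delicate point is then upgrading from ``directionally differentiable with linear directional derivative at $0_{d-1}$'' to ``$\gamma$ is $\cC^1$ near $0_{d-1}$'' — this is where the definition of ``$\cC^1$ boundary point of $M$'' (existence of \emph{some} border chart with a $\cC^1$ graph) must be invoked, possibly after a further smooth change of chart straightening the boundary. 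For the third equivalence, the ``$\Leftarrow$'' direction is essentially a definition chase: an essential corner of depth $k$ is exactly a border point admitting a chart whose image is locally $epi(\sum_{i=1}^{k-1}|x_i|)$ up to rigid motion, and one checks directly that the inward sector of $epi(\sum_{i=1}^{k-1}|x_i|)$ at $0_d$ is (after a linear change) $\setR^d_k$ — compute $I_{0_d}$ of that epigraph using \eqref{eq:epi}: $\gamma'(0_{d-1};v') = \sum_{i=1}^{k-1}|v_i|$, so $epi(\gamma'(0;\cdot)) = \{v : v_d \ge \sum_{i=1}^{k-1}|v_i|\}$, which is linearly equivalent to $\setR^k_+ \times \setR^{d-k}$ via the substitution mixing $v_d$ with the $v_i$; the ``$\Rightarrow$'' direction needs that $I_xM \simeq \setR^d_k$ determines the local model up to the allowed changes of chart, which again is the chart-rigidity issue.

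The main obstacle I expect is the ``$\Rightarrow$'' direction of the last two equivalences: knowing only the inward sector (an \emph{infinitesimal} object) one must recover a \emph{local} normal form for $\overline{\Omega}$, and a priori the inward sector at $0_d$ does not see the behavior of $\gamma$ away from $0_{d-1}$ — e.g. one could imagine a graph whose directional derivative at the origin is linear but which is not $\cC^1$ nearby, or whose tangent cone is $\setR^k_+ \times \setR^{d-k}$ at the origin but which is not a corner. My reading is that the cleanest fix is to restrict attention to the definitions as given: ``$\cC^1$ boundary point of $M$'' and ``essential corner of depth $k$'' are \emph{defined} in terms of the existence of a good chart, so one direction is pure unwinding, while the other direction should be established by showing that \emph{any} border chart producing the prescribed inward sector can be composed with a smooth diffeomorphism of $\setR^d$ straightening the tangent cone (a linear map suffices for the corner case by the explicit formula above; for the $\cC^1$ case one invokes that a $\cC^0$ graph with linear directional derivative everywhere near $0_{d-1}$, after the linear straightening, is locally a $\cC^1$ graph — or, more safely, one notes the definition only requires \emph{some} chart and constructs it). I would also make explicit use of Lemma \ref{prop:invariance of regularity of border points} so that the ``$\cC^1$ boundary point'' and ``essential corner'' properties of $\phi(x)$ transfer back to intrinsic properties of $x$ independently of the chosen chart, which is what closes the loop between the Euclidean computation and the manifold statement.
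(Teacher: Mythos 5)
Your strategy is the paper's: reduce to the Euclidean picture through a centered chart using Lemma \ref{prop:inward sectors correspond to inward sectors via charts} and the identification $T_xM \simeq \setR^d$ from Lemma \ref{prop:dimension of tangent space}, then compute the inward sector of the local model. The directions you actually carry out — interior point gives $I_xM=T_xM$; a $\cC^1$ boundary point (resp.\ essential corner of depth $k$) admits a chart whose image is locally $epi$ of a $\cC^1$ function (resp.\ of $\sum_{i<k}|x_i|$), whose inward sector at $0_d$ is a half-space (resp.\ linearly equivalent to $\setR^d_k$) by \eqref{eq:epi} — are precisely the content of the paper's proof, and your explicit computation for the corner model is a correct way to do that step.

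Two points remain open in your write-up. First, for ``$x\in\partial M \Rightarrow I_xM\neq T_xM$'': your claim that a limiting direction of a sequence in $\setR^d\setminus\bar{\Omega}$ cannot lie in $T^B_{0_d}\bar{\Omega}$ is false (at a smooth boundary point every tangential direction is a limit of exterior as well as interior sequences), and your fallback that $I_{0_d}\bar{\Omega}$ cannot contain both $v$ and $-v$ for $v$ transverse to the graph is valid when the graph function is Lipschitz or bounded below by a linear map near the origin, but not for an arbitrary $\cC^0$ epigraph: for $\gamma(y')=-\sqrt{\|y'\|}$ the smooth curve $t\mapsto(t^2,0,\dots,0,-t)$ stays in $epi(\gamma)$, so $-e_d\in I_{0_d}(epi(\gamma))$ and in fact the inward sector is all of $\setR^d$. (The paper's own justification, namely that $\phi(\bar U)\subset\bar{\mathbb{H}}^d$ up to rigid motion, has exactly the same limitation, so you are not behind the paper here, but the step is not closed.) Second, the implications from $I_xM\simeq\mathbb{H}^d$ or $I_xM\simeq\setR^d_k$ back to the existence of a $\cC^1$ or corner chart are, as you correctly diagnose, the crux; your proposal only describes what would be needed (chart rigidity, upgrading a linear directional derivative at one point to $\cC^1$ regularity nearby) without supplying it. The paper dispatches these converses in a single clause (``\dots is $\mathbb{H}^d$ in the first case only and $\setR^d_k$ in the second one only''), which distinguishes the two model cases from one another but does not exclude other kinks sharing the same inward sector. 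So your diagnosis of the obstacle is accurate and more candid than the paper's treatment, but as written the proposal proves only the same forward implications the paper does; note that Theorem \ref{th:1} and its proof only ever use those forward computations of $I_xM$ at the model points.
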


\begin{proof}
    It follows from Remark \ref{rem:pre-distinction} that if $x \in int(M)$ then $I_xM \simeq T_xM$. If $x \in \partial M$, then up to rigid motion $\phi(\bar{U}) \subset \bar{\mathbb{H}}^d$ and the latter is a cone, thus $I_{0_d}\phi(\bar{U}) \subset \bar{\mathbb{H}}^d$. This prevents $I_x M$ from being isomorphic to $T_xM \simeq \setR^d$. Since $x$ can only be border or interior (Lemma \ref{lem:either}), the first equivalence is established.  To prove the second and third ones,  notice that Lemma \ref{prop:inward sectors correspond to inward sectors via charts} implies in both cases that $I_xM \simeq I_{0_d} \phi(\bar{U})$.  The conclusion follows from the fact that for small enough $\rho>0$, the set $\phi(\bar{U})\cap \mathbb{B}_\rho^d$ is an open neighborhood of $0_d$ in $\mathbb{H}^d$ and $\mathbb{R}^d_k$ respectively, which yields that $I_{0_d} \phi(\bar{U})$ is $\mathbb{H}^d$ in the first case and $\mathbb{R}^d_k$ in the second one. 
\end{proof}

\subsubsection{Strictly inward sector}

This section extends the Euclidean open feasible direction cone to the setting of manifolds with kinks. To do so, it might be natural to consider
\[
\{ c'(0) : c  \, \colon I \to M \text{ smooth, }  I = [0,\epsilon)\text{ for some $\epsilon > 0$, $c(t) \in int(M)$ for any $t>0$, and } c(0)=x  \}/\sim
\]
where $x$ is a border point of a smooth manifold with kinks, and $\sim$ is like in \eqref{eq:equiv}. A problem with this set is that even if a curve $c$ entirely lies within $int(M)$, the initial velocity vector $c'(0)$ might still be a boundary vector.  For instance,  take $M:=[0,\infty)\times [0,\infty) \subset \cl^2,  x:=0_2$, and $c(t):=(t+t^2,t^2)$ for $t > 0$. Then $c'(0)=(1,0)$ belongs to the previous set, but if we consider $M$ as a subset of $\cl^2,$ its open feasible direction cone at $0_2$ is $(0,+\infty)\times (0,+\infty)$ that does not contain the boundary vector $(1,0)$. See Figure \ref{Im14}.

\begin{figure}[!h] \centering
\includegraphics[scale=0.6]{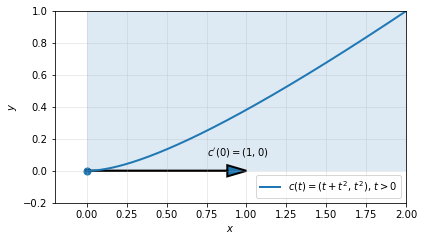}
\caption{Counter-example}
\label{Im14}
\end{figure}

For this reason, we use the following definition.

\begin{definition}\label{def:Strictly inward sector of the tangent space}
Let $M$ be a smooth manifold with kinks. Then the strictly inward sector of $M$ at $x \in M$ is the subset of $T_xM$ defined as
\[
\tilde{I}_xM := 
\begin{cases}
 I_xM & \text{if $x$ is a cusp}, \\
int (I_xM) & \text{otherwise.}
\end{cases}
\]  
Here $T_xM$ is endowed with the natural topology coming from the identification with $\setR^n$ induced by any local chart.
\end{definition}


\subsection{Riemannian manifolds with kinks}\label{subsec:metric}

In this section, we develop a suitable notion of Riemannian metric for manifolds with kinks. There is no particular difference compare to the case of manifolds without/with boundary, but we provide details for completeness.

\subsubsection{Tangent bundle} Let us first define the tangent bundle on a smooth manifold with kinks. The definition is basically the same as for manifolds without/with boundary.

\begin{definition}\label{tangent bundle}
   Let $M$ be a smooth manifold with kinks. Then the tangent bundle of $M$ is the vector bundle
   $$TM:=\bigsqcup_{p\in M}T_pM = \{(p,v):v\in T_pM\}.$$
\end{definition}
  
  Let us check the following natural result.
  
  \begin{lemma}\label{prop:tangent bundle is a smooth manifold with kinks}
    Let $M$ be a $d$-dimensional smooth manifold with kinks. Then $TM$ is a $2d$-dimensional smooth manifold with kinks.
  \end{lemma}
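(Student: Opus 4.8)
The plan is to mimic the classical construction of the smooth structure on the tangent bundle of a manifold without boundary, carefully checking that the charts produced land in closures of open subsets of $\setR^{2d}$ and that the transition maps are $\cC^\infty$ diffeomorphisms in the sense of Definition \ref{Diffeomorphism of sets in ddim}. First I would fix a smooth atlas $\{(U_i,\phi_i)\}_{i\in\mathcal I}$ of $M$ and, for each $i$, use the linear isomorphisms $\di_p\phi_i : T_pM \to T_{\phi_i(p)}\phi_i(\overline{U_i}) \simeq \setR^d$ from Lemma \ref{prop:dimension of tangent space} to define
\[
\Phi_i : \pi^{-1}(U_i) \to \phi_i(\overline{U_i}) \times \setR^d, \qquad \Phi_i(p,v) = \big(\phi_i(p), \di_p\phi_i(v)\big),
\]
where $\pi : TM \to M$ is the bundle projection. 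Since $\phi_i(\overline{U_i})$ is the closure of an open subset of $\setR^d$ (when $U_i$ is a border chart) or an open subset of $\setR^d$ (when $U_i$ is an interior chart), the image $\phi_i(\overline{U_i})\times\setR^d$ is the closure of an open subset of $\setR^{2d}$, namely $int(\phi_i(U_i))\times\setR^d$, and $\Phi_i(p,v)$ has a $\cC^0$ boundary point exactly when $\phi_i(p)$ does — this is where one invokes Definition \ref{interior and border charts} to see that $(\pi^{-1}(U_i),\Phi_i)$ is a $2d$-dimensional interior or border chart. One must also topologize $TM$: declare a set open iff its intersection with each $\pi^{-1}(U_i)$ has open (in the subspace sense) image under $\Phi_i$; paracompactness and the Hausdorff property of $TM$ then follow from those of $M$ together with local homeomorphism to $\phi_i(\overline{U_i})\times\setR^d$, by the standard argument.

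Next I would compute the transition maps. On the overlap $\pi^{-1}(U_i \cap U_j)$,
\[
\Phi_j \circ \Phi_i^{-1}(\xi,w) = \Big( (\phi_j\circ\phi_i^{-1})(\xi),\ \di_{\phi_i^{-1}(\xi)}(\phi_j\circ\phi_i^{-1})\, (w) \Big),
\]
using the chain rule (valid here by the remark following the definition of $\di_x\Phi$). The first component is the $\cC^\infty$ diffeomorphism $\psi_{ij}\df\phi_j\circ\phi_i^{-1}$ of the $\cC^k$-compatibility assumption, which by Definition \ref{Diffeomorphism of sets in ddim} extends to a $\cC^\infty$ diffeomorphism $\tilde\psi_{ij}$ between open subsets of $\setR^d$; the second component is $w \mapsto D\tilde\psi_{ij}(\xi)\,w$, where $D\tilde\psi_{ij}$ is the (classical, smooth) Jacobian of that extension. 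Hence $\Phi_j\circ\Phi_i^{-1}$ extends to the map $(\xi,w)\mapsto(\tilde\psi_{ij}(\xi), D\tilde\psi_{ij}(\xi)w)$ defined on an open subset of $\setR^{2d}$, which is smooth with smooth inverse (the inverse being the analogous expression built from $\tilde\psi_{ij}^{-1}$). This is precisely a $\cC^\infty$ diffeomorphism of subsets in the sense of Definition \ref{Diffeomorphism of sets in ddim}, so the charts $\{(\pi^{-1}(U_i),\Phi_i)\}$ form a smooth atlas with kinks, and taking the maximal atlas containing it makes $TM$ a $2d$-dimensional smooth manifold with kinks.

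I would expect the main obstacle to be bookkeeping around the border charts rather than anything conceptually deep: one has to be a little careful that the extension data in Definition \ref{Diffeomorphism of sets in ddim} and in Definition \ref{def:smooth functions} are compatible, i.e.~that the fiber-linear part of $\Phi_j\circ\Phi_i^{-1}$ genuinely arises as the differential of a \emph{smooth extension} of $\phi_j\circ\phi_i^{-1}$ and not merely of $\phi_j\circ\phi_i^{-1}$ itself, and that $\Phi_i$ really is a topological embedding of $\pi^{-1}(U_i)$ (injectivity and bicontinuity follow from $\phi_i$ being one and $\di_p\phi_i$ being a linear iso depending continuously on $p$, but this deserves a line). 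The well-definedness of $\di_p\phi_i$ at border points, and the fact that $\di_x\phi$ preserves the property of the base point being $\cC^0$ on the boundary, are already furnished by Lemmas \ref{lem:prop_diffeo}, \ref{prop:invariance of regularity of border points} and \ref{prop:dimension of tangent space}, so those can simply be cited. A final routine point is verifying $int(TM) = \bigsqcup_{p\in int(M)}T_pM$ and $\partial(TM)=\bigsqcup_{p\in\partial M}T_pM$, which drops out of the description of the charts above.
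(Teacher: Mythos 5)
Your proposal is correct and follows essentially the same route as the paper: charts $\Phi(p,v)=(\phi(p),\di_p\phi(v))$ on $\pi^{-1}(U)$, smoothness of the transition maps via the compatibility of the base charts and the linearity of the fiber maps, and identification of $\partial(TM)$ with $\partial\phi(U)\times\setR^d$ to see that border points stay $\cC^0$. If anything, your explicit formula $(\xi,w)\mapsto(\tilde\psi_{ij}(\xi),D\tilde\psi_{ij}(\xi)w)$ for the transition map, built from a smooth extension of $\phi_j\circ\phi_i^{-1}$, is slightly more careful than the paper's phrasing on that point.
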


  \begin{proof}
  Denote by $\pi$ the quotient map 
  $TM \to M$ mapping $(x,v)$ to $x$. Consider a maximal smooth atlas with kinks $\mathcal{A} = \{(U_i,\phi_i)\}$ on $M$. For a local chart $(U_i,\phi_i)$ in this atlas, we define a corresponding $2d$-dimensional chart $(\mathcal{U}_i,\Phi_i)$ for $TM$ by setting
  \[
  \mathcal{U}_i\df \pi^{-1}(U_i), \qquad \Phi_i(x,v) := (\phi_i(x), \di_x\phi_i(v)) \,\,\, \text{ for all $(x,v) \in \mathcal{U}_i$.}
  \]
  For any chart $(U_j,\phi_j) \in \mathcal{A}$ such that $U_j \cap U_i \neq \emptyset $, the charts $(\mathcal{U}_i,\Phi_i)$ and $(\mathcal{U}_j,\Phi_j)$ are $\cC^\infty$ compatible because $\Phi_j \circ \Phi^{-1} : (x,v) \mapsto (\phi_j\circ \phi^{-1}(x), d_{\phi_i(x)}\phi_j \circ d_x \phi_i^{-1}(v))$ is smooth from $\Phi_i(\mathcal{U}_i \cap \mathcal{U}_j)$ to $\Phi_j(\mathcal{U}_i \cap \mathcal{U}_j)$.

Let us show that, with respect to these charts, the interior and border of $M$ match up with the interior and border of the tangent bundle $TM$. This is obvious for the interior since the local charts are defined as in the case of manifolds without boundary. Let us then consider $p \in \partial M$ and a border chart $(U,\phi)$ centered at $p$. Then $\phi(p)=0_d$ is a $\mathcal{C}^0$ boundary point of $int(\phi(U))$. Let us show that for any $v \in T_pM$,
\[
\Phi(p,v) = (\phi(p), \di_p \phi(v))
\]
is a $\mathcal{C}^0$ boundary point of $int(\Phi(\mathcal{U}))$, where $\mathcal{U}\df \pi^{-1}(U)$. Shrinking $U$ if necessary, we can identify $\mathcal{U}$ with $U \times \mathbb{R}^d$, so that $\Phi(\mathcal{U})$ is $\phi(U) \times \mathbb{R}^d$. Then $\partial \Phi(\mathcal{U}) = \partial \phi(U) \times \mathbb{R}^d$. Since $\phi(p)$ is a $\mathcal{C}^0$ boundary point of $\phi(U)$, we get that any $(\phi(p), v)$ is a $\mathcal{C}^0$ boundary point of $\phi(U) \times \mathbb{R}^d$.
\end{proof}

\begin{remark}
    We could also define the cotangent bundle on a smooth manifold with kinks as $T^*M \df \bigsqcup_{p\in M}T_p^*M = \{(p,\omega):\omega\in T_p^*M\}$, where each $T_p^*M$ is the dual of $T_pM$. We do not delve on this notion since we don't need it in the rest of the paper.
\end{remark}

\subsubsection{Covariant $k$-tensor bundle}

Let $k$ be a positive integer. Recall that a covariant $k$-tensor on a vector space $V$ is an element of the $k$-fold tensor product $V^* \otimes \dots \otimes V^*$ or, equivalently, a $k$-linear map $V \times V \times \dots \times V \to \cl.$ We denote by $T^k(V^*)$ the space of all covariant $k$-tensors on $V$. Then we can define the covariant $k$-tensor bundle on a smooth manifold with kinks as follows.

\begin{definition}
  Let $M$ be a smooth manifold with kinks. Then the space of covariant $k$-tensors on $M$ is defined as $$T^k(T^*M):=\bigsqcup_{p\in M}T^k(T_p^*M).$$
\end{definition}

Acting like in the previous subsection, one can easily show the following. We omit the proof for brevity.

\begin{lemma}\label{prop:covariant k-tensor bundle is a manifold with kinks}
     Let $M$ be a smooth manifold with kinks. Then $T^k(T^*M)$ is a smooth manifold with kinks of dimension $2dk$.
\end{lemma}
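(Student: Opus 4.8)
The plan is to mimic almost verbatim the proof of Lemma~\ref{prop:tangent bundle is a smooth manifold with kinks}, replacing the fibre $\setR^d$ by the finite-dimensional vector space $T^k(\setR^d)^* \simeq \setR^{d^k}$ and the linear isomorphism $\di_x\phi$ by the induced isomorphism on covariant $k$-tensors. Concretely, I would first fix a local chart $(U,\phi)$ on $M$ around $p$ and use the isomorphism $\di_x\phi : T_xM \to T_{\phi(x)}\phi(\bar U)$ from Lemma~\ref{prop:dimension of tangent space} to define, for each $x\in U$, the pushforward isomorphism on covariant $k$-tensors
\[
(\di_x\phi)^{*,k} : T^k(T_x^*M) \to T^k(\setR^{d*}), \qquad \big((\di_x\phi)^{*,k}\tau\big)(w_1,\dots,w_k) \df \tau\big((\di_x\phi)^{-1}w_1,\dots,(\di_x\phi)^{-1}w_k\big).
\]
This gives a bijection $\Phi : \pi^{-1}(U) \to \phi(U)\times \setR^{d^k}$, $(x,\tau)\mapsto(\phi(x),(\di_x\phi)^{*,k}\tau)$, which we declare to be a chart; here $\pi : T^k(T^*M)\to M$ is the natural projection. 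The topology on $T^k(T^*M)$ is the one making all these $\Phi$ homeomorphisms onto their images, and one checks Hausdorffness and paracompactness exactly as for $TM$.

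Second, I would verify $\cC^\infty$-compatibility of two such charts $(\pi^{-1}(U),\Phi)$ and $(\pi^{-1}(V),\Psi)$: the transition map is $(\phi(x),\xi)\mapsto\big(\psi\circ\phi^{-1}(\phi(x)), \,A(\phi(x))^{\otimes k}\,\xi\big)$ where $A(y) \df \di_{\phi^{-1}(y)}(\psi\circ\phi^{-1})$ depends smoothly on $y\in\phi(U\cap V)$ (since $\psi\circ\phi^{-1}$ is a $\cC^\infty$ diffeomorphism in the sense of Definition~\ref{Diffeomorphism of sets in ddim}, its differential extends smoothly), and $A\mapsto A^{\otimes k}$ acting on $\setR^{d^k}$ is polynomial hence smooth; thus the transition is smooth on its domain. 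Third, and this is the real content, I would identify which points of $T^k(T^*M)$ are interior and which are border, and check it is chart-independent. As in the $TM$ case, shrinking $U$ we trivialise $\pi^{-1}(U)\simeq U\times \setR^{d^k}$, so $\Phi(\pi^{-1}(U)) = \phi(U)\times\setR^{d^k}$ and $\partial\big(\phi(U)\times\setR^{d^k}\big) = \partial\phi(U)\times\setR^{d^k}$; if $\phi(p)$ is a $\cC^0$ boundary point of $\inn\phi(U)$, then $(\phi(p),\xi)$ is a $\cC^0$ boundary point of $\phi(U)\times\setR^{d^k}$ for every $\xi$, because the defining continuous graph function $\gamma$ for $\phi(U)$ yields the continuous graph function $(y',\xi)\mapsto\gamma(y')$ (constant in the $\xi$-variable) for the product; and conversely an interior point of $\phi(U)$ lifts to an interior point of the product. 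So $\inn\big(T^k(T^*M)\big) = \pi^{-1}(\inn M)$ and $\partial\big(T^k(T^*M)\big) = \pi^{-1}(\partial M)$, and by Remark~\ref{border points don't depend on the chart chosen} this classification does not depend on the chart. Assembling all the $\Phi$ into a maximal atlas then exhibits $T^k(T^*M)$ as a $(d+d^k)$-dimensional smooth manifold with kinks.

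The main obstacle — such as it is — is the bookkeeping in the third step: making sure that the product of an open set with a $\cC^0$ boundary point and a full vector space is again an open set whose relevant point is a $\cC^0$ boundary point, and that this survives the transition maps. Everything else is routine and parallels Lemma~\ref{prop:tangent bundle is a smooth manifold with kinks} word for word, which is presumably why the authors "omit the proof for brevity"; I would simply indicate that the argument of that lemma carries over with $\setR^d$ replaced by $\setR^{d^k}$ and $\di_x\phi$ replaced by its $k$-th tensor power, and leave the details to the reader.
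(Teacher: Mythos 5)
Your proposal is correct and is exactly the argument the paper has in mind: the paper omits the proof precisely because it is the proof of Lemma \ref{prop:tangent bundle is a smooth manifold with kinks} with the fibre $\setR^d$ replaced by $T^k(\setR^{d*})\simeq\setR^{d^k}$ and $\di_x\phi$ replaced by the induced map on covariant $k$-tensors, which is what you carry out. The only cosmetic point is that for \emph{covariant} tensors the transition map acts by the $k$-fold tensor power of the inverse transpose of $A(y)$ rather than of $A(y)$ itself, but this is still a smooth (rational, hence $\cC^\infty$ on invertible matrices) function of $y$, so your compatibility check goes through unchanged.
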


\begin{remark}
Likewise, we could define $(k,r)$-tensors on smooth manifolds with kinks, but we do not need them in the present paper so we skip them.
\end{remark}

\subsubsection{Riemannian metrics}

Let us define covariant tensor fields on manifold with kinks.

\begin{definition}
    Let $M$ be a smooth manifold with kinks.  A $k$-tensor field on $M$ is a section $g$ of the covariant $k$-tensor bundle ${T}^k(T^*M)$. Such a field is of $\cC^\ell$ regularity if  $g:M\to {T}^k(T^*M)$ is a $\cC^{\ell}$ map w.r.t. the smooth structures introduced in the previous section.
 \end{definition}

We are now in a position to define Riemannian metrics on smooth manifolds with kinks.

 \begin{definition}\label{def:Riemannian metrics on manifolds with Lipschitz kinks}
    Let $M$ be a smooth manifold with kinks, and $k$ a positive integer. A $\cC^k$ Riemannian metric on $M$ is a $\cC^k$ symmetric, positive definite, section of the covariant $2$-tensor bundle ${T}^2(T^*M)$.
\end{definition}

\subsubsection{Riemannian distance and volume measure}

The Riemannian distance defined via a length-minimizing problem extends with no change to the context of manifolds with kinks. We recall the definition for completeness and refer to \cite[p.337--341]{Lee}, for instance, for more details.

\begin{definition}
    Let $M$ be a smooth connected manifold with kinks admitting a $\cC^1$ Riemannian metric $g$. The associated Riemannian distance is defined by
    \[
    \di(x,y) \df \inf \left\{\int_0^1 g_{c(t)}(c'(t),c'(t)) \di t : c \in \cC^1([0,1],M) \text{ s.t.~}c(0)=x \text{ and } c(1)=y \right\},
    \]
    for any $x,y \in M$.
\end{definition}

In the same way, the definition of the Riemannian volume measure carries over to manifolds with kinks and behaves like in the case of manifolds without/with boundary.

\begin{definition}
    Let $M$ be a smooth manifold with kink admitting a $\cC^1$ Riemannian metric $g$. Then the Riemannian volume measure is defined by
    \[
    \mathrm{vol}_g(A) \df \sum_{\alpha} \int_{\phi_\alpha(U_\alpha)} \chi_\alpha \circ \phi_\alpha^{-1}\sqrt{\det g}
    \]
    for any Borel set $A \subset M$, where $\{(U_\alpha, \phi_\alpha\}$ is an atlas compatible with the smooth structure of $M$ and $\{\chi_\alpha\}$ is a partition of unity subordinate to this atlas.
\end{definition}

\subsection{Extension of Riemannian manifolds with kinks}\label{subsec:submanifolds}

For our purposes, we need to extend beyond the border any $\cC^2$ Riemannian metric defined on a smooth manifold with kinks $M$. To this aim, we shall flow $M$ into its interior using a suitable semiflow. We adapt an argument for manifolds with corners that goes back to \cite{DouadyHerault} at least, see also \cite[Section 2.7]{Michor}.

\subsubsection{Vector fields and semiflows} Let us begin by defining vector fields and their associated semiflows on smooth manifolds with kinks.

\begin{definition}\label{inner and strictly inner vector fields} Let $M$ be a smooth manifold with kinks. A smooth vector field on $M$ is a smooth section of the tangent bundle $TM.$ Such a vector field $\xi$ is called inward-pointing (resp.~strictly inward pointing) if $\xi_x\in I_xM$ (resp.~$\tilde{I}_xM$) for any $x\in M.$
\end{definition}

Let us ensure that any smooth manifold with kinks admits a stricly inward-pointing vector field.

\begin{lemma}\label{lem:strictly_inward}
    Let $M$ be a smooth manifold with kinks. Then $M$ admits a smooth strictly inward-pointing vector field.
\end{lemma}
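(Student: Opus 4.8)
The plan is to build the strictly inward-pointing vector field locally in charts and then patch the pieces together with a partition of unity, checking that the inward-pointing condition survives the convex combination. First I would fix a locally finite atlas $\{(U_\alpha,\phi_\alpha)\}_{\alpha}$ of $M$ made of interior and border charts, together with a subordinate partition of unity $\{\chi_\alpha\}$, which exists because $M$ is paracompact and Hausdorff. On each interior chart there is nothing to do: any constant vector field (say the pull-back of $e_d$) is automatically in $I_xM = T_xM = \tilde I_xM$. On each border chart $(U_\alpha,\phi_\alpha)$, the image $\phi_\alpha(U_\alpha)$ is locally the (strict) epigraph of a continuous function $\gamma$ after a rigid motion, and by Lemma \ref{prop:open feasible direction cone non-empty} (or directly by the computation in its proof) the direction $e_d$ lies in $\tilde F_{\phi_\alpha(x)}(\phi_\alpha(U_\alpha))$ at every border point; moreover at non-border points of $U_\alpha$ the strictly inward sector is all of $\setR^d$. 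Pulling $e_d$ back through $[\di\phi_\alpha]^{-1}$ and using Lemma \ref{prop:inward sectors correspond to inward sectors via charts}, together with the fact (Proposition \ref{prop:at boundary of open sets, inward sector equals tangent cone} and \eqref{eq:epi2}) that $\di_x\phi_\alpha$ maps $\tilde I_xM$ onto $\mathrm{int}\, I_{\phi_\alpha(x)}\phi_\alpha(\bar U_\alpha)$, gives a smooth vector field $\xi_\alpha$ on $U_\alpha$ with $(\xi_\alpha)_x \in \tilde I_xM$ for every $x\in U_\alpha$.

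Next I would set $\xi \df \sum_\alpha \chi_\alpha\, \xi_\alpha$, a smooth globally defined section of $TM$. The point to verify is that $\xi_x \in \tilde I_xM$ for every $x$. Fix $x$; only finitely many $\alpha$ with $\chi_\alpha(x)>0$ contribute, and for each such $\alpha$ we have $(\xi_\alpha)_x\in\tilde I_xM$. So $\xi_x$ is a convex combination (the weights $\chi_\alpha(x)$ sum to $1$ and are nonnegative) of vectors lying in $\tilde I_xM$. If $x\in \mathrm{int}\,M$, then $\tilde I_xM = T_xM$ and there is nothing to check. If $x\in\partial M$ is not a cusp, then $\tilde I_xM = \mathrm{int}\, I_xM$, and working in a single chart $(U,\phi)$ centered at $x$, the set $\mathrm{int}\, I_{0_d}\phi(\bar U) = \mathrm{int}(T^B_{0_d}\phi(\bar\Omega))$ is the strict epigraph of the continuous, positively homogeneous map $v'\mapsto \gamma'(0;v')$ by \eqref{eq:epi} and \eqref{eq:epi2}; such a strict epigraph is a convex cone (positive homogeneity plus subadditivity of the directional derivative of a Lipschitz function, or: the interior of a closed convex cone is convex), hence stable under convex combinations, which gives $\di_x\phi(\xi_x)\in\mathrm{int}\,I_{0_d}\phi(\bar U)$ and therefore $\xi_x\in\tilde I_xM$. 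If $x$ is a cusp, then $\tilde I_xM = I_xM$, and I would invoke the fact that $I_xM$ is itself convex: in a chart it is $T^B_{0_d}\phi(\bar\Omega) = \mathrm{epi}(\gamma'(0;\cdot))$ by \eqref{eq:epi}, which is a closed convex cone for the same reason, so convex combinations stay inside.

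The main obstacle, and the only place requiring genuine care, is the convexity claim used in the patching step — namely that $I_xM$ (equivalently $T^B_x\bar\Omega = \mathrm{epi}(\gamma'(x;\cdot))$) is convex at an LCDD border point. This rests on $\gamma$ being Lipschitz and continuously directionally differentiable, so that $v'\mapsto \gamma'(x;v')$ is finite, continuous, positively homogeneous of degree $1$, and subadditive; subadditivity $+$ homogeneity make it sublinear, and the epigraph of a sublinear function is a convex cone. One subtlety is that the general border points allowed in a manifold with kinks need only be $\cC^0$, not LCDD, so for genuinely wild (non-LCDD) kinks I would instead argue directly that $I_xM$ is convex: given $[\gamma_1],[\gamma_2]\in I_xM$ and $t\in[0,1]$, produce a curve realizing $t\gamma_1'(0)+(1-t)\gamma_2'(0)$ — e.g. in a chart, the segment $s\mapsto x + s\big(t v_1 + (1-t) v_2\big)$ need not lie in $\bar\Omega$, so one reparametrizes or bends it, which is exactly where the argument is most delicate and where one must be content with the interior directions when $x$ is not a cusp. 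Since the statement only asserts existence of a strictly inward-pointing field, and the definition of $\tilde I_xM$ takes interiors precisely to sidestep boundary pathologies, the chart-plus-partition-of-unity construction above, combined with the convexity of the relevant (open) cones, completes the proof.
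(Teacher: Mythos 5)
Your construction is the same as the paper's: pull back the model direction $e_d$ through each border chart, extend trivially over interior charts, and glue with a partition of unity. You have also correctly identified the one step that the paper's own proof passes over in silence (it simply asserts the glued field is ``strictly inward-pointing by construction''), namely that a convex combination of vectors lying in $\tilde I_xM$ must again lie in $\tilde I_xM$. Unfortunately, the justification you offer for this step is wrong, and the underlying convexity claim is false. The one-sided directional derivative of a Lipschitz function is positively homogeneous but \emph{not} subadditive in general; subadditivity is a property of convex functions or of the Clarke generalized derivative, not of $\gamma'(x;\cdot)$. Concretely, take $d=2$ and $\gamma(v_1)=-|v_1|$, so that $\Omega=\mathring{epi}(\gamma)$ is a reflex corner of opening angle $3\pi/2$. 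The origin is a perfectly admissible LCDD border point, yet by \eqref{eq:epi} one has $T^B_{0_2}\bar\Omega=\{v : v_2\ge -|v_1|\}$, which is not convex: $(1,-1)$ and $(-1,-1)$ lie in it (indeed arbitrarily small perturbations of them lie in its interior), while their midpoint $(0,-1)$ does not even lie in the closed cone. So ``the epigraph of $\gamma'(x;\cdot)$ is a convex cone'' fails, and with it the assertion that $\tilde I_xM$ is stable under convex combinations. Your fallback for non-LCDD kinks (bending a curve to realize $t v_1+(1-t)v_2$) cannot work either, since the counterexample above is already LCDD.

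The gap is therefore real: if two overlapping border charts happened to produce local fields pointing near the two extreme rays of a reflex corner, the partition-of-unity average could exit the inward sector. To close it one needs an argument that the \emph{specific} vectors being averaged at a given point all lie in a common convex subcone of $\tilde I_xM$ --- for instance, that for a fixed $L$-Lipschitz epigraph presentation the pullbacks of all admissible vertical directions land in the convex cone $\{v : v_d> L\|v'\|\}$, suitably transported through the transition maps --- rather than appealing to convexity of $I_xM$ itself. As written, neither your proposal nor the paper's two-line verification supplies this, so the patching step remains unproved.
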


\begin{proof}
    Let $(U,\phi)$ be a border chart centered at some $x \in \partial M$. Up to composing $\phi$ with a rigid motion, we may assume that there exist $\delta>0$ and $\gamma \in \cC^0(\setR^{d-1})$ such that 
    \[
        \begin{cases}
        \phi(\bar{U}) \cap  \setB_\delta^d  = epi(\gamma)  \cap \setB_\delta^d, \nonumber\\
        int (\phi(\bar{U})) \cap \setB_\delta^d  = \mathring{epi}(\gamma)  \cap \setB_\delta^d.
        \end{cases}
     \]
     Then we set $\xi_{(U,\phi)}(y) \df (\di_{y}\phi)^{-1}(e_d)$
     for any $y \in \bar{U} \cap \phi^{-1}(\setB_\delta^d)$. Since $e_d$ belongs to $I_{\phi(y)}\phi(\bar{U})$, we get from Lemma \ref{prop:inward sectors correspond to inward sectors via charts} that $\xi_{(U,\phi)}(y) \in I_yM$. If $y$ is a cusp, this implies that $\xi_{(U,\phi)}(y) \in \tilde{I}_yM$. If not, notice that $e_d$ belongs to the interior of $I_{\phi(y)}\phi(\bar{U})$, which is mapped to the interior of $I_yM$ by the linear isomorphism $\di_{y}\phi)^{-1}$. Thus $\xi_{(U,\phi)}(y) \in \tilde{I}_yM$ in this case too. Consider now an atlas $\{(U_\alpha, \phi_\alpha)\}$ of $M$ and a partition of unity $\{\chi_\alpha\}$ subordinate to this atlas. Define
     \[
     \xi \df \sum_{\substack{(U_\alpha,\phi_\alpha)\\ \text{border charts}}} \chi_\alpha \, \xi_{(U_\alpha,\phi_\alpha)} : M \to TM.
     \]
     Then $\xi$ is a global smooth vector field on $M$, and it is strictly inward-pointing by construction.
\end{proof}

Recall the definition of integral curve.

\begin{definition}
     Let $M$ be a smooth manifold with kinks, and $\xi$ a smooth vector field on it. An integral curve of $\xi$ is a smooth curve $c : I \to M$ such that $c'(t) = \xi(c(t))$ for any $t \in I$.
\end{definition}

Then the following existence result holds.

\begin{theorem}\label{th:semiflow}
     Let $M$ be a smooth manifold with kinks, and $\xi$ the smooth striclty inward-pointing vector field given by Lemma \ref{lem:strictly_inward}. Then there exists a smooth function $\delta : \partial M \to (0,+\infty)$ and a smooth embedding $\Phi : \mathcal{P}_\delta \to M$, with $\mathcal{P}_\delta = \{ (t,x) : x \in \partial M, t \in [0,\delta(x)) \} \subset \setR \times \partial M$, such that for any $x \in \partial M$ the map $[0,\delta(x)) \ni t \mapsto \Phi(t,x)$ is an integral curve of $\xi$ starting at $x$.
\end{theorem}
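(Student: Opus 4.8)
The plan is to construct $\Phi$ locally by flowing along $\xi$, then globalize. First I would fix a border point $x \in \partial M$ and a border chart $(U,\phi)$ centered at $x$; pushing $\xi$ forward via $\di\phi$ gives a smooth vector field $\bar\xi \df \phi_*\xi$ defined on an open neighborhood $\tilde V$ of $\phi(\bar U)$ in $\setR^d$ (using that $\xi$ is smooth, hence extends smoothly past the border in the chart, exactly as in Definition~\ref{def:smooth functions}). Classical ODE theory (Picard–Lindelöf with smooth dependence on initial conditions) yields an open set $W \subset \setR \times \tilde V$ containing $\{0\}\times \phi(\bar U \cap \phi^{-1}(\setB_\delta^d))$ and a smooth local flow $\theta : W \to \tilde V$. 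The key point is that because $\xi$ is inward-pointing, $\bar\xi$ points into $\mathrm{int}(\phi(U)) = \mathring{epi}(\gamma)$ at every boundary point of $\phi(U)$: by Lemma~\ref{lem:chara_border} and Proposition~\ref{prop:inward sectors correspond to inward sectors via charts}, $\bar\xi(z) \in I_z\phi(\bar U) \subset T^B_z\phi(\bar U)$, and one checks that for $z \in \partial\phi(U)$ and $v \in I_z\phi(\bar U)$ the curve $s \mapsto z + sv$ (hence also the integral curve of $\bar\xi$ through $z$, which has the same initial velocity) stays in $\overline{\mathring{epi}(\gamma)}$ for small $s>0$ — in fact in $\mathring{epi}(\gamma)$ for $s>0$ when the velocity lies in $\mathrm{int}\, I_z$, and a short argument handles the boundary-velocity case using that $\mathring{epi}(\gamma)$ is positively invariant under a flow whose generator is pointing into the open feasible cone along $\partial\phi(U)$. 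This shows $t \mapsto \theta(t,z)$ lands in $\mathrm{int}(\phi(U))$ for $t$ in a half-open interval $[0,\delta_x(z))$, and pulling back by $\phi^{-1}$ gives a local integral curve of $\xi$ in $M$ starting at $z$.

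Next I would upgrade this to an embedding. For fixed $z$, the map $t \mapsto \theta(t,z)$ has nonzero velocity $\bar\xi(z) \ne 0$ (nonvanishing since $e_d \notin \{0\}$ in the construction of Lemma~\ref{lem:strictly_inward}), so shrinking $\delta_x$ makes it injective; transversality of $\xi$ to $\partial M$ — which holds because $\bar\xi(z) = e_d \notin T_z\partial\phi(U) = \setR^{d-1}\times\{0\}$ in the model chart near a $\cC^1$ or corner point, and more generally because an inward vector cannot be tangent to the border — makes the map $(t,z) \mapsto \theta(t,z)$ an immersion on a neighborhood of $\{0\}\times(\partial\phi(U) \cap \setB_\delta^d)$, and after further shrinking, a homeomorphism onto its image, hence a smooth embedding. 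I would then patch: cover $\partial M$ by countably many such border charts $(U_\alpha,\phi_\alpha)$ with associated local flow-out embeddings $\Phi_\alpha$ and local time functions $\delta_\alpha$, take a subordinate partition of unity $\{\chi_\alpha\}$, and set $\delta(x) \df \sum_\alpha \chi_\alpha(x)\,\delta_\alpha(x)$, which is smooth and positive on $\partial M$. Since all the $\Phi_\alpha$ are restrictions of \emph{the same} global flow of $\xi$ (uniqueness of integral curves with given initial point — here one needs $\xi$ smooth, which it is), they agree on overlaps, so $\Phi(t,x) \df \Phi_\alpha(t,x)$ for any $\alpha$ with $\chi_\alpha(x)>0$ is a well-defined smooth map on $\mathcal P_\delta = \{(t,x) : x \in \partial M,\ 0 \le t < \delta(x)\}$, and $t \mapsto \Phi(t,x)$ is by construction an integral curve of $\xi$ starting at $x$.

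Finally I would verify that $\Phi$ is a (global) smooth embedding. Smoothness is local and already established. Injectivity: if $\Phi(t,x) = \Phi(s,y)$ then the two integral curves through this common point, run backwards, must coincide (uniqueness), forcing $x = y$ by the collar structure — one has to rule out that one curve "exits" $\partial M$ while still parametrized by $[0,\delta)$, which is where transversality of $\xi$ to $\partial M$ and the fact that integral curves started on $\partial M$ immediately enter $\mathrm{int}(M)$ are used; then $t=s$ follows from injectivity of each individual curve. That $\Phi$ is a homeomorphism onto its image follows from properness after possibly shrinking $\delta$ (each $\Phi_\alpha$ is already an embedding, and the global map is an embedding because it is a locally finite union of compatible embeddings with matching boundary values).

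\emph{Main obstacle.} The delicate step is controlling the flow along \emph{border velocities that are not strictly inward} — i.e.\ at points $z \in \partial\phi(U)$ where $\bar\xi(z)$ lies in $I_z\phi(\bar U)\setminus\mathrm{int}\,I_z\phi(\bar U)$ (this genuinely occurs at cusps and at non-$\cC^1$ kinks, and is exactly why the hypothesis is "inward" rather than "strictly inward"). One must show the integral curve nonetheless leaves $\partial M$ instantaneously and stays in $M$, which cannot be read off from the linearization alone; I expect to need the LCDD structure (Proposition~\ref{lem:tangent cones as epigraphs}, giving $T^B_z = \overline{\tilde F_z}$ and $\mathcal L^d(\partial T^B_z)=0$) together with a positive-invariance / barrier argument for $\mathring{epi}(\gamma)$ under the flow, comparing the integral curve to the straight segment $z + s\bar\xi(z)$ via a Gronwall estimate on $\gamma(\,\cdot\,)$ along the curve. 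The uniform lower bound making $\delta$ strictly positive and smooth, and the compatibility of the $\Phi_\alpha$ on overlaps, are then comparatively routine.
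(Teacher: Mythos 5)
Your overall strategy --- solve the ODE locally in a border chart, argue that the flow immediately enters the manifold, then patch with a partition of unity --- is the same as the paper's, but your write-up is substantially more careful than the published proof. The paper's argument consists of a single local application of the Cauchy--Lipschitz theorem under the assertion that $\di\phi(\xi)\equiv e_d$, an identity that holds only for the particular vector field constructed in Lemma \ref{lem:strictly_inward} and only in the chart used there, not for an arbitrary inward-pointing $\xi$; it then never returns to the patching step, to why the curves stay in $M$, or to the embedding property. So every issue you raise (positive invariance of $int(M)$, injectivity, immersion/transversality) is one the paper leaves unaddressed.

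That said, the ``main obstacle'' you flag is a genuine gap in your proposal, and it cannot be closed under the stated hypothesis: for a merely inward-pointing $\xi$ the theorem is false. Take $M=\bar{\mathbb{H}}^2$ and $\xi\equiv e_1$. Then $\xi_x\in I_xM$ at every point, every integral curve starting on $\partial M$ remains in $\partial M$, and $\Phi(t,(x_1,0))=(x_1+t,0)$ is not injective for any positive $\delta$, so $\Phi$ is never an embedding. Your injectivity and immersion arguments silently invoke transversality of $\xi$ to $\partial M$ (``an inward vector cannot be tangent to the border''), which is exactly what fails here; the correct hypothesis is \emph{strictly} inward-pointing, which is all that the application in Theorem \ref{lem:Embedding a manifold with kinks into a manifold without boundary} requires. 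Under that stronger hypothesis your positive-invariance program is the right one and can be completed at an LCDD border point: writing $\phi(U)$ locally as $epi(h)$ with $h$ Lipschitz, strict inwardness gives $\xi_d(z)>h'(z';\xi'(z))$, hence $h(\gamma'(s))\le h(z')+s\,h'(z';\xi'(z))+o(s)<\gamma_d(s)-cs+o(s)$ along the integral curve $\gamma$ (the Lipschitz bound absorbs the $o(s)$ deviation of $\gamma$ from the segment $z+s\xi(z)$), so the curve enters $\mathring{epi}(h)$ instantly. But this uses the LCDD structure in an essential way; at a general $\cC^0$ border point (where $I_z$ may have empty interior, e.g.\ at a cusp) neither your sketch nor the paper's offers an argument, and I do not see one. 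In short: same approach as the paper, more honestly executed, but both proofs need the hypothesis strengthened to ``strictly inward'' and the invariance step actually written out.
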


\begin{proof}
    The function $\delta$, the set $\mathcal{P}_\delta$ and the embedding $\Phi$ are first defined locally, and then patched together by means of a partition of unity. To define these objects locally around some point $x \in \partial M$, consider a border chart $(U,\phi)$ centered at $x$. By definition of border chart, there exists $\rho>0$ such that, up to a rigid motion, the set $\phi(U) \cap \mathbb{B}_\rho^d$ writes as the local epigraph of some continuous function $\gamma:\setR^{d-1}\to \setR$ such that $\gamma(0_{d-1})=0$. Since $\phi(U \cap \partial M)$ is mapped to the local graph of this function, and since $d_y \phi(\xi)\equiv e_d$, we can apply the Cauchy--Lipschitz theorem in $\mathbb{R}^d$ to get existence of a smooth function $\delta : \phi(U \cap \partial M) \to \setR_+$ such that for any $y \in U \cap \partial M$ there exists an integral curve $c_{\phi(y)} : [0,\delta(y))] \to \setR^d$ of $\di \phi(\xi)$ starting at $\phi(y)$, so that the map $[0,\delta(y)) \ni t \mapsto \Phi(t,y) \df \phi^{-1}(c_{\phi(y)}(t))$ is an integral curve of $\xi$ starting at $y$.
\end{proof}

\subsubsection{Riemannian submanifolds with kinks}

Let us now provide a definition of submanifold adapted to the context of manifolds with kinks, inspired by \cite[p.19]{Michor} who introduced submanifolds with corners.

\begin{definition}\label{submanifolds with kinks}
    Let $M$ be a $d$-dimensional manifold with kinks, and $k$ a positive integer at most equal to $d$.
    \begin{enumerate}
        \item[(i)] We say that $N \subset M$ is a $k$-dimensional submanifold with kinks of $M$ if for every $p\in N,$ there is a chart $(U,\phi)$ of $M$ centered at $p$ so that $\phi(U \cap N) \subset \cl^k \times \{0\}^{d-k} \subset \ddim$ and $\phi(p)=0_d$ is an interior or a $\cC^0$ boundary point of $\phi(U\cap N)$. We call $(U,\phi)$ a slice chart centered at $p$ of $N$
    \item[(ii)] Assume now that $M$ is endowed with a $\cC^2$ Riemannian metric $g$. Then $g$ induces a $\cC^2$ Riemannian metric on $N$ like in the setting of usual submanifolds (i.e.~through the inclusion $TN \subset TM$).
    \end{enumerate}
\end{definition}

We say that a smooth manifold is \textit{open} if it is non-compact without boundary. Our next result is that any smooth manifold with kinks $M$ can be embedded into an open smooth manifold $\tilde{M}$ having same dimension, and that any $\cC^2$ Riemannian metric on $M$ extends to $\tilde{M}$ provided $\partial M$ is Lipschitz. The proof actually embeds a smooth manifold with kinks into its interior.

\begin{theorem}
    \label{lem:Embedding a manifold with kinks into a manifold without boundary}
    Let $M$ be a smooth $d$-dimensional manifold with kinks.
    \begin{enumerate}
        \item[(i)] Then there exists a smooth open $d$-dimensional manifold $\tilde{M}$ such that $M \subset \tilde{M}$ is a smooth submanifold with kinks.
        \item[(ii)] Consider a $\cC^2$ Riemannian metric $g$ on $M$. Then there exist a neighborhood $O$ of $M$ in $\tilde{M},$ i.e. an open subset $O\subset \tilde{M}$ containing $M,$ and a $\cC^{2}$ Riemannian metric $\tilde{g}$ on $O$ so that $\tilde{g}$ restricts to $g$ on $M.$
    \end{enumerate}
\end{theorem}

\begin{proof}
    Let us prove (i). Consider a strictly inward pointing vector field $\xi$ on $M$ as given by Lemma \ref{lem:strictly_inward}. Then the semiflow of $\xi$ given by Theorem \ref{th:semiflow} maps $\partial M$ into $int(M),$ and $int(M)$ into $int(M).$

    Let us now prove (ii). As in many extension theorems, the idea is to extend $g$ on local neighborhoods of $M$, and then use a smooth partition of unity to patch all these local extensions to a global one. Note that we need only to extend the metric at border points of $M$, since interior points are interior for $\tilde{M}$ too. Let us then consider a maximal $d$-dimensional atlas compatible with the smooth structure of $M$, a Lipschitz border point $x \in \partial M$, and a border chart $(U,\phi)$ from the previous atlas such that $x \in U$. Let  $\tilde{U}$ be an open neighborhood of $U$ in $\tilde{M}$, and $\tilde{\phi}$ an extension of $\phi$ from $U$ to $\tilde{U}$. The steps to extend $g$ from $U$ to $\tilde{U}$ are the following.

\begin{enumerate}[label=(\roman*)]
    \item[a)] Pull back $g$ to $\phi(U)$ by $\phi^{-1}$, i.e.~consider $(\phi^{-1})^*g$ on $\phi(U)$. This is a $\mathcal{C}^2$ Riemannian metric on $\phi(U)$.
    
    \item[b)] Use Whitney's theorem \cite{Whitney} to extend each coordinate of $(\phi^{-1})^*g$ to form a $\cC^2$ Riemannian metric $h$ on an open subset $V$ of $\mathbb{R}^d$ containing $\phi(x)$, such that $\phi^{-1}(V) \subset \tilde{U}$. Note that this extension may not be unique and depends on the choice of the border chart $(U,\phi)$.
    
    \item[c)] Pull back $h$ by $\tilde{\phi}$ to $\tilde{U}$, i.e.~consider $\tilde{\phi}^* h$ on $\tilde{U} \supset U$. Since the extension in the previous step is not unique, this pullback may not be unique either. Nevertheless, regardless of the specific extension used in b), the tensor $g$ extends to a $\mathcal{C}^{2}$ Riemannian metric $\tilde{g}$ on $\tilde{U}$ whose restriction to $U$ agrees with $g$, by the contravariant functoriality of pullbacks applied to the composition $\phi \circ \phi^{-1}=Id$.
\end{enumerate}
\end{proof}

\section{Asymptotic behavior of the intrinsic Gaussian Operator}\label{sec:intrinsic}

In this section, we prove Theorem \ref{th:1} without the refined estimates on the error term $\mathrm{Err}(t)$. We refer to Section \ref{sec:refined} for these refined estimates. Consider a smooth $d$-dimensional Riemannian manifold with kinks $M$ endowed with a $\cC^2$ Riemannian metric $g$, a density $p \in \cC_{\ge 0}^2(M)$, a function $f \in \cC^3(M)\cap L^1(M,p\,\mathrm{vol}_g)$, an exponent $\eta  \in (0,1/2)$, and a point $x \in M$. For the sake of clarity, let us highlight the main steps of our proof, each of which being dedicated a subsection.

\textit{Step 1.} We establish that
\begin{equation}\label{eq:1!}
L_tf(x) = \bar{L}_{t,\eta } f (x) + O(t^{-d/2-1}e^{-t^{2 \eta -1}}) \qquad \text{as $t \downarrow 0$}
\end{equation}
with
\begin{equation}\label{eq:localized}
\bar{L}_{t,\eta } f (x) \df \frac{1}{t^{d/2+1}} \int_{B_{t^\eta }(x)} \exp\left(-\frac{\di_g^2(x,y)}{t}\right) (f(x)-f(y)) p(y) \di \mathrm{vol}_g(y).
\end{equation}

\textit{Step 2.} We write
\begin{equation}\label{eq:error}
\bar{L}_{t,\eta } f (x) = I(t) + II(t)
\end{equation}
where $I(t)$ is a term suited for an exponential change of variable, and we show that
\[
II(t) = o\left( \frac{1}{\sqrt{t}}\right) =: \frac{\mathrm{Err(t)}}{\sqrt{t}}  \qquad \text{as $t \downarrow 0$.}
\]

\textit{Step 3.} We prove an Euclidean version of the expansion
\[
I(t) = -\frac{c_d}{\sqrt{t}} p(x) \, \partial_{v_g(x)}f(x) - c_{d+1} \bigg( p(x) A_gf(x) +  [p,f]_g(x) \bigg) + O(\sqrt{t}) \qquad \text{as $t \downarrow 0$.}
\]

\textit{Step 4.} We conclude by change of variable in $I(t)$.

\subsection{Localisation} We perform the first step of our proof in the general context of a metric measure space $(Z,\di,\mu)$. In this case, the intrinsic $d$-dimensional Gaussian operator at time $t>0$ associated with a density $q \in L^1(Z,\mu)$ is defined by
\[
L_t h(z) \df \frac{1}{t^{d/2+1}}\int_{Z}  \exp\left(-\frac{\di^2(z,y)}{t}\right) (f(z)-f(y)) q(y)\di \mu(y)
\]
for any $h \in L^1(X,p\mu)$ and $\mu$-a.e.~$z \in Z$. Then the following holds.

\begin{lemma}\label{lem:localisation}
   For any $h \in L^1(Z,p \mu)$, $\mu$-a.e.~$z \in Z$, and $t>0$,
\[
\left| \frac{1}{t^{d/2+1}} \int_{X\backslash B_{t^\eta }(z)} \exp\left(-\frac{\di^2(z,y)}{t}\right) (h(z)-h(y)) p(y)\di \mathrm{\mu}(y) \right| 
\leq [|h(z)|\|p\|_{1} + \norm{hp}_{1}] \, \frac{1}{t^{d/2 +1}} e^{-t^{2\eta -1}}
\]
As a consequence,
\[
L_th(z) = \bar{L}_th(z) + O(t^{-d/2-1}e^{-t^{2 \eta -1}}) \qquad \text{as $t\downarrow 0$,}
\]
with
\[
\bar{L}_th(z) \df \frac{1}{t^{d/2+1}} \int_{B_{t^\eta }(z)} \exp\left(-\frac{\di^2(z,y)}{t}\right) (h(z)-h(y)) p(y)\di \mathrm{\mu}(y).
\]
\end{lemma}

\begin{proof}
    By triangle inequality, for any $z,y \in X$,
    \[
    |h(z)-h(y)| \le |h(z)||p(y)| + |hp|(y).
    \]
    Multiply by $\exp\left(-\frac{\di^2(z,y)}{t}\right)$ and integrate over $y \in X \backslash B_{t^\eta}(z)$. The result follows from there since $\di^2(z,y) \ge t^{2 \eta}$ for any such $y$.
\end{proof}

Applying the previous lemma with $(Z,\di,\mu) = (M,\di_g,\mathrm{vol}_g)$, $h=f$, $z=x$, gives \eqref{eq:1!}.

\subsection{Exponential coordinates} Let $(\tilde{M},\tilde{g})$ be the open Riemannian manifold of which $(M,g)$ is a submanifold, as obtained in Theorem \ref{lem:Embedding a manifold with kinks into a manifold without boundary}. We identify $(T_x \tilde{M},\tilde{g}_x) = (T_x M,g_x)$ with $(\setR^d,\cdot)$ by choosing a $g(x)$-orthonormal basis $(w_1,\ldots,w_d)$ of $T_x \tilde{M}$ and mapping each $w_i$ to the $i$-th element of the canonical basis of $\setR^d$. We let $$\iota : \setR^d \to T_xM$$ denote this isometric identification. We denote by $\tilde{B}_r(x)$ the $\tilde{g}$-ball of radius $r$ centered at $x$, by $\tilde{\exp}_x$ the $\tilde{g}$-exponential map at $x$, and by $\tilde{\gamma}_{x,v}$ the unique maximal $\tilde{g}$-geodesic with initial point $x$ and initial velocity $v \in T_x \tilde{M}$.

Since $x$ is an interior point for $\tilde{M}$, there exists $R>0$ such that $\tilde{\exp}_x$ is a diffeomorphism from $\{\|\cdot\|_{\tilde{g}_x} < R \} \subset T_x \tilde{M}$ onto $\tilde{B}_R(x)$. This implies, in particular, that $(B_R(x), (\iota \circ \tilde{\exp}_x)^{-1})$ is a $d$-dimensional chart centered at $x$ of $M$. This chart is interior (resp.~border) if $x$ is interior (resp.~border) for $M$.

Since $(\tilde{M},\tilde{g})$ extends $(M,g)$, the set of $\cC^1$ curves joining $x$ to $y \in \tilde{B}_R(x)$ and lying entirely in $M$ is a subset of those curves lying in $\tilde{M}$, so that
\begin{equation}\label{eq:ineq_distance}
\tilde{\di}(x,y) \le \di(x,y).
\end{equation}
As a consequence, for any $r \in (0,R)$,
\[
B_r(x) \subset \tilde{B}_r(x).
\]
Consider the open subset of $\setR^d$ defined as
\begin{equation}\label{eq:omega}
\Omega \df (\iota \circ \tilde{\exp}_x)^{-1}(B_R(x)).
\end{equation}
Note that if $x$ is interior then $\Omega = \mathbb{B}_R^d$. However, if $x$ is border, then $\Omega$ is a proper subset of $\mathbb{B}_R^d$ admitting $0_d$ as a $\cC^0$ boundary point. Moreover, the regularity of $x$ as a border point transfers to the regularity of $0_d$ as a $\cC^0$ boundary point of $\Omega$.

We define
 \begin{align}\label{eq:W}
    W & \df \{v \in T_x \tilde{M} : \text{ there exists } t_v >0 \text{ s.t.~}\tilde{\gamma}_{x,v}(t) \in M \text{ for all $t \in (0,t_v)$}\}, \\
    \mathbb{W} & \df \iota^{-1}(W), \nonumber
\end{align}
and for any $t>0$ we introduce
 \begin{align}\label{eq:II}
 I(t) & \df \frac{1}{t^{d/2+1}} \int_{B_{t^\eta}(x)\cap \tilde{\exp}_x(W\cap \mathbb{B}_R^d)} \exp\left(-\frac{\di^2(x,y)}{t}\right) (f(x)-f(y)) \, p(y) \, d\mathrm{vol}_g(y),\nonumber\\
 II(t) & \df \frac{1}{t^{d/2+1}} \int_{B_{t^\eta}(x)\backslash \tilde{\exp}_x(W\cap \mathbb{B}_R^d)} \exp\left(-\frac{\di^2(x,y)}{t}\right) (f(x)-f(y)) p(y) \, d\mathrm{vol}_g(y).
 \end{align}
 Obviously,
 \[
 \bar{L}_tf(x) = I(t) + II(t).
 \]
 Moreover, $W$ is a cone : indeed, if $v \in W$ and $\lambda>0$, then $\tilde{\gamma}_{x,\lambda v}(t) = \tilde{\gamma}_{x,v}(\lambda t)$ for any $t \in (0,t_v/\lambda)$, so that $\lambda v \in W$ with $t_{\lambda v}=t_v/\lambda$. This obviously implies that $\mathbb{W}$ is a cone too.

Finally, recall that for any $y \in \setR^d$ we set $y^{(k)} \df y\otimes \ldots \otimes y \in (\setR^d)^{\otimes^k}$. If $h \in \cC^{\infty}(U)$ for some open subset $U \subset \setR^d$, we let
$\di^{(k)}_zh$ denote the differential of order $k$ of $h$ at $z \in \setR^d$, which is understood here as a $k$-linear symmetric map from $(\setR^d)^{\otimes^k}$ to $\setR$, and we set
\begin{equation}\label{eq:previous}
\|\di^{(k)}_zf\|_{op} \df \sup_{y \in \setR^d\backslash \{0_d\}} \frac{\di^{(k)}_zf(y^{(k)})}{\|y\|^{k}} \, \cdot
\end{equation}
We shall also write $\di_z^{(0)}f(y^{(0)})$ for $f(z)$, in which case $\|\di^{(0)}_zf\|_{op}= f(z)$.

\begin{proposition}\label{prop:localized}
Set $\tilde{f}\df f \circ \tilde{\exp}_x \circ \iota$ and $\tilde{p} \df (p \circ \tilde{\exp}_x \circ \iota) \sqrt{\det g}$, where $\sqrt{\det g}$ is the Radon--Nikodym derivative of $(\tilde{\exp}_x \circ \iota)^{-1}_\#\mathrm{vol}_g$ with respect to $\mathcal{L}^d$. Then
    \[
    \bar{L}_{t,\eta} f (x)  = \frac{1}{t} \int_{\mathbb{B}^d_{t^{\eta}}\cap \tilde{F}_{0_d}(\bar{\Omega})}e^{-\frac{\|\xi\|^2}{t}} (\tilde{f}(0_d) - \tilde{f}(\xi))\tilde{p}(\xi)\di \xi + o\left( \frac{1}{\sqrt{t}}\right) \qquad \text{as $t\to 0$.}
    \]
\end{proposition}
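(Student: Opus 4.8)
The plan is to perform the change of variables $\xi = \tilde{\exp}_x^{-1}(y)$ in the integral defining $\bar{L}_{t,\eta}f(x)$ in \eqref{eq:localized}, and then to replace the intrinsic distance $\tilde{\di}(x,y)$ appearing in the exponential by the Euclidean norm $\|\xi\|$, at the cost of an error that vanishes faster than $1/\sqrt{t}$. First I would write, using that $\tilde{\exp}_x$ is a diffeomorphism from $\mathbb{B}_R^d$ onto $\tilde{B}_R(x)$ and that its Jacobian with respect to $\mathrm{vol}_g$ is $\sqrt{\det g}$ in normal coordinates (here $g$ denotes the pulled-back metric, so $\det g(0_d)=1$),
\[
\bar{L}_{t,\eta}f(x) = \frac{1}{t^{d/2+1}} \int_{\tilde{\exp}_x^{-1}(B_{t^\eta}(x))} e^{-\frac{\tilde{\di}(x,\tilde{\exp}_x(\xi))^2}{t}} (\tilde{f}(0_d)-\tilde{f}(\xi))\, q(\xi)\, \di\xi,
\]
after first using Lemma \ref{lem:localisation} — actually, more precisely, the decomposition preceding \eqref{eq:localized} combined with \eqref{eq:ineq_distance} — to pass from $\di$ to $\tilde{\di}$ inside the ball (the region $B_{t^\eta}(x)$ vs.\ $\tilde{B}_{t^\eta}(x)$ and the two distances agree up to harmless higher-order terms since $M \subset \tilde M$ and geodesics of the extension through interior points are the relevant ones near $x$). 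The next step is to identify the domain: by Proposition \ref{prop:gen} applied to $\Omega = \tilde{\exp}_x^{-1}(B_R(x))$ at its $\cC^0$ — in fact LCDD, by Lemma \ref{prop:invariance of regularity of border points} — boundary point $0_d$, the rescaled indicator $1_{\Omega_{0_d,s}}$ converges a.e.\ to $1_{\tilde{F}_{0_d}\Omega} = 1_{T^B_{0_d}\bar\Omega}$, which after the substitution $\xi = \sqrt{t}\,z$ in the Gaussian integral is exactly what produces the domain $\mathbb{B}^d_{t^\eta}\cap \tilde{F}_{0_d}\Omega$ in the statement (the discrepancy between $\tilde{\exp}_x^{-1}(B_{t^\eta}(x))$ rescaled and $\tilde F_{0_d}\Omega$ is supported, in the limit, on the $\mathcal{L}^d$-negligible set $\partial T^B_{0_d}\bar\Omega$ by \eqref{eq:epi3}, and is multiplied by a bounded Gaussian-decaying integrand, hence contributes $o(1/\sqrt t)$).

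The analytic heart is the replacement of $\tilde{\di}(x,\tilde{\exp}_x(\xi))$ by $\|\xi\|$. In geodesic normal coordinates centered at $x$ one has exactly $\tilde{\di}(x,\tilde{\exp}_x(\xi)) = \|\xi\|$ for all $\xi \in \mathbb{B}_R^d$ — this is the defining property of normal coordinates — so in fact \emph{no} approximation is needed for the exponential itself; the only approximations are the Taylor expansion of $\tilde f$ and $q$ near $0_d$ (needed only to the order that survives multiplication by the $1/t$ prefactor after rescaling) and the domain identification just described. Concretely, writing $\tilde f(0_d)-\tilde f(\xi) = -d_{0_d}\tilde f(\xi) - \tfrac12 d^{(2)}_{0_d}\tilde f(\xi^{(2)}) + O(\|\xi\|^3)$ and $q(\xi) = q(0_d) + O(\|\xi\|)$, one substitutes $\xi = \sqrt t\, z$, collects the terms of order $t^{-1/2}$ (coming from the linear term in $\tilde f$ against $q(0_d)$) and order $t^0$ (the quadratic-in-$\tilde f$ term and the linear$\times$linear cross term), checks that all higher-order remainders are $o(t^{-1/2})$ using the Gaussian weight to control the $\|z\|^k$ moments and the fact that $t^\eta/\sqrt t \to \infty$ so the truncated domain may be replaced by all of $\tilde F_{0_d}\Omega$ up to exponentially small error; this yields the stated right-hand side with the integrand $e^{-\|\xi\|^2/t}(\tilde f(0_d)-\tilde f(\xi))q(\xi)$ kept in un-expanded form as claimed.

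The step I expect to be the main obstacle is the rigorous control of the \textbf{domain mismatch}: the set $\tilde{\exp}_x^{-1}(B_{t^\eta}(x))$, after dividing by $\sqrt t$, is not literally equal to $\mathbb{B}^d_{t^\eta}\cap \tilde F_{0_d}\Omega$, and one must show the symmetric difference contributes $o(1/\sqrt t)$ to the integral. This requires (i) knowing that $\Omega$ near $0_d$ is squeezed between epigraphs of Lipschitz functions with the same directional derivative $\gamma'(0;\cdot)$ (Proposition \ref{lem:tangent cones as epigraphs} and \eqref{eq:epi}), so that for $y \in B_{t^\eta}(x)$ the point $\tilde{\exp}_x^{-1}(y)$ lies within $o(t^\eta)$ of the cone $T^B_{0_d}\bar\Omega$; (ii) bounding the resulting sliver's Gaussian mass by $C t^{\eta}\cdot t^{-1} \cdot t^{(d-1)/2}\cdot t^{1/2} = Ct^{\eta}\cdot t^{(d-2)/2+\dots}$ — one has to be a little careful with exponents here, but the point is that an extra factor of (width)$\,=o(\sqrt t)$ relative to the $d-1$ transverse Gaussian directions beats $t^{-1/2}$; and (iii) dealing with the fact that $\gamma'(0;\cdot)$ need only be continuous, not Lipschitz, so the "sliver" estimate must be done via the a.e.\ pointwise convergence of Proposition \ref{prop:gen} together with dominated convergence rather than a crude volume bound. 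The Taylor-expansion bookkeeping, by contrast, is routine once the domain is pinned down.
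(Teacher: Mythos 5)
Your overall architecture matches the paper's: pull back through $\tilde{\exp}_x$, identify the limiting domain with $\tilde{F}_{0_d}\Omega$ via Proposition \ref{prop:gen} and \eqref{eq:epi2}--\eqref{eq:epi3}, and kill the mismatch region by combining the a.e.\ convergence of rescaled indicators with Gaussian domination and dominated convergence. The domain-mismatch analysis in your last paragraph is essentially the paper's Step~2 (your fallback (iii) is in fact the argument used; no sliver/width estimate is needed).

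The genuine gap is your treatment of the exponent. Your first display replaces $\di(x,y)$ by $\tilde{\di}(x,y)$ on all of $B_{t^\eta}(x)$ on the grounds that ``the two distances agree up to harmless higher-order terms.'' This is false in general at a kink: for $y \in M$ not reachable from $x$ by a radial $\tilde g$-geodesic staying in $M$ (e.g.\ a point just across a finger of the complement for an oscillating Lipschitz boundary), the intrinsic path must detour and $\di(x,y)/\tilde{\di}(x,y)$ need not tend to $1$ as $y \to x$. The correct statement is measure-theoretic, not pointwise, and the paper's proof is organized around exactly this: it first splits the integral along the cone $W$ of unit directions $v$ with $\tilde{\gamma}_{x,v}([0,t_v)) \subset M$. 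On $\tilde{\exp}_x(W)$ one has the \emph{exact} identities $\di(x,y)=\tilde{\di}(x,y)=\|\tilde{\exp}_x^{-1}(y)\|$ and $\tilde{\exp}_x^{-1}(B_{t^\eta}(x))\cap W=\mathbb{B}^d_{t^\eta}\cap W$, together with $W=\tilde{F}_{0_d}\Omega$, so the good region reproduces the claimed main term with no error at all. Off $W$, one cannot approximate $\di$ by $\tilde{\di}$; instead one uses the one-sided inequality \eqref{eq:ineq_distance}, $\tilde{\di}\le\di$, to \emph{dominate} $e^{-\di^2/t}$ by $e^{-\|\xi\|^2/t}$, and only then runs your indicator-convergence argument to conclude the bad region contributes $o(1/\sqrt{t})$. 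Your proposal never makes this one-sided domination explicit, and without it the dominated-convergence step on the bad set has nothing to apply to; with it, your plan closes and coincides with the paper's proof.
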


\begin{proof}

\textbf{Step 1.}  We show that for any small enough $t>0$,
\begin{align}\label{eq:step1!}
I(t) = \frac{1}{t} \int_{\mathbb{B}^d_{t^{\eta}}\cap \tilde{F}_{0_d}(\bar{\Omega})} e^{-\frac{\|\xi\|^2}{t}} (\tilde{f}(0_d) - \tilde{f}(\xi))\tilde{p}(\xi)\di \xi.
\end{align}

For any $v \in W$, set $s_v \df \sup \{s >0 : \tilde{\gamma}_{x,v}(s') \in M \text{ for any $s' \in [0,s]$}\} \in (0,+\infty]$. Let us show that
\begin{equation}\label{eq:technical}
B_{t^{\eta}}(x) \cap \tilde{\exp}_x(W) = B_{t^{\eta}}(x) \cap  \{\tilde{\exp}_x(sv) : v \in W \text{ with } \|v\|_{g_x} = 1 \text{ and } s \in (0,s_v) \}.
\end{equation}
Since $W$ is a cone,
\[
W = \bigsqcup_{\substack{v \in W\\ \|v\|_{g_x} = 1}} \setR_+ v,
\]
thus
\[
B_{t^{\eta}}(x) \cap \tilde{\exp}_x(W) = \bigsqcup_{\substack{v \in W\\ \|v\|_{g_x} = 1}} B_{t^{\eta}}(x) \cap \tilde{\exp}_x(\setR_+ v).
\]
Now for any $v \in W$ such that $\|v\|_{g_x} = 1$,
\[
 B_{t^{\eta}}(x) \cap \tilde{\exp}_x(\setR_+ v) = \{\tilde{\exp}_x(sv) : s \in (0,s_v) \} \cap B_{t^{\eta}}(x).
 \]
Then 
\[
B_{t^{\eta}}(x) \cap \tilde{\exp}_x(W) = \bigsqcup_{\substack{v \in W\\ \|v\|_{g_x} = 1}} \{\tilde{\exp}_x(sv) : s \in (0,s_v) \} \cap B_{t^{\eta}}(x)
\]
hence we get \eqref{eq:technical}.

Let us now prove that for any $y \in B_{t^{\eta}}(x) \cap \tilde{\exp}_x(W)$,
\begin{equation}\label{eq:equal_distance}
    \tilde{\di}(x,y) = \di(x,y).
\end{equation}
Thanks to \eqref{eq:technical}, we know that there exist $v \in W$ with $\|v\|_{g_x}=1$ and $s \in [0,s_v)$ such that $y = \tilde{\exp}_x(sv)$ and $\tilde{\exp}_x(s'v)\in M$ for any $s' \in [0,s]$. Since $g$ and $\tilde{g}$ coincide on $M$, the $g$-geodesic joining $x$ to $y$ coincides with $[0,s] \ni s' \mapsto \tilde{\exp}_x(s'v)$. This yields \eqref{eq:equal_distance}.

Lastly, we point out that
\begin{equation}\label{eq:step1_1}
    \mathbb{W} = \tilde{F}_{0_d} (\bar{\Omega})
\end{equation}
and
\begin{equation*}
   \tilde{\exp}_x^{-1}(B_{t^\eta}(x)) \cap W = \iota(\mathbb{B}^d_{t^\eta} \cap \mathbb{W}).
\end{equation*}
These are directly resulting from the fact that $\tilde{\gamma}_{x,v}(s) = \tilde{\exp}_x (sv)$ and $t\mapsto t\iota^{-1}(v)$ is the Euclidean geodesic in $\setR^d$ starting at $0_d$ with initial velocity $\iota^{-1}(v)$.

We are now in a position to obtain \eqref{eq:step1!}. We successively use \eqref{eq:equal_distance} and the change of variable $y = (\exp_x\circ \iota)(\xi)$ to obtain
 \begin{align*}
     I(t) &  = \frac{1}{t^{d/2+1}} \int_{B_{t^\eta}(x)\cap \tilde{\exp}_x(W\cap \mathbb{B}_R^d)} \exp\left(-\frac{\tilde{\di}^2(x,y)}{t}\right) (f(x)-f(y)) \, p(y)d\mathrm{vol}_g(y)\\
     & = \frac{1}{t^{d/2+1}} \int_{\mathbb{B}_{t^\eta}^d\cap \mathbb{W}} \exp\left(-\frac{\|\xi\|^2}{t}\right) (\tilde{f}(0_d)-\tilde{f}(\xi)) \tilde{p}(\xi)\, d \xi.
 \end{align*}
Then we apply \eqref{eq:step1_1} to get \eqref{eq:step1} as sought.

 \textbf{Step 2.}
 We show that
 \begin{align}\label{eq:step4}
 II(t) = o\left(\frac{1}{\sqrt{t}}\right)  \qquad \text{as $t \downarrow 0$.}
 \end{align}
To this purpose, let us first establish that as $t \downarrow 0$,
\begin{equation}\label{eq:step2_2}
    1_{\frac{(\iota \circ \tilde{\exp_x})^{-1}(B_{t^\eta}(x)) }{\sqrt{t}}\backslash \mathbb{W}} \to 0 \qquad \text{$\mathcal{L}^d$-a.e.~on $\setR^d$.}
\end{equation}
For any $t >0$ such that $t^{\eta} \le R$,
\begin{align*}
    1_{\frac{(\iota \circ \tilde{\exp_x})^{-1}(B_{t^\eta}(x)) \backslash W}{\sqrt{t}}} & \le 1_{\frac{(\iota \circ \tilde{\exp_x})^{-1}(B_{R}(x)) \backslash W}{\sqrt{t}}} = 1_{\frac{\Omega \backslash \mathbb{W}}{\sqrt{t}}}.
\end{align*}
Since $\mathbb{W}$ is a cone, the latter characteristic function is equal to
$$
1_{\frac{\Omega}{\sqrt{t}} \backslash \mathbb{W}} = 1_{\frac{\Omega}{\sqrt{t}}} - 1_{\mathbb{W}} = 1_{\frac{\Omega}{\sqrt{t}}} - 1_{\tilde{F}_{0_d}(\bar{\Omega})}  \stackrel{t \downarrow 0}{\longrightarrow} 1_{\tilde{F}_{0_d}(\bar{\Omega})} - 1_{\tilde{F}_{0_d}(\bar{\Omega})} = 0 \qquad \text{$\mathcal{L}^d$-a.e.~on $\setR^d$.}
$$
Here we use \eqref{eq:step1_1} to get the second equality and Proposition \ref{prop:gen} for the convergence a.e. as $t\downarrow 0$.

Let us now estimate
\begin{align}\label{eq:obtained}
     |II(t)| &  \le  \frac{1}{t^{d/2+1}} \int_{B_{t^\eta}(x)\backslash \tilde{\exp}_x(W\cap \mathbb{B}_R^d)} \exp\left(-\frac{\di^2(x,y)}{t}\right) |f(x)-f(y)| p(y)d\mathrm{vol}_g(y) \nonumber \\
     & \le \frac{1}{t^{d/2+1}}\int_{B_{t^\eta}(x)\backslash \tilde{\exp}_x(W\cap \mathbb{B}_R^d)} \exp\left(-\frac{\tilde{\di}^2(x,y)}{t}\right) |f(x)-f(y)| p(y)d\mathrm{vol}_g(y) \nonumber \\
     & = \frac{1}{t^{d/2+1}} \int_{(\iota \circ \tilde{\exp}_x)^{-1}(B_{t^\eta}(x))\backslash \mathbb{W}} \exp\left(-\frac{\|\xi\|^2}{t}\right) |\tilde{f}(0)-\tilde{f}(\xi)| \tilde{p}(\xi)d\xi  \\
     & = \frac{1}{t} \int_{\frac{(\iota \circ \tilde{\exp}_x)^{-1}(B_{t^\eta}(x))}{\sqrt{t}}\backslash \mathbb{W}} \exp\left(-\|\zeta\|^2) \right) |\tilde{f}(0_d)-\tilde{f}(\sqrt{t}\zeta)| \tilde{p}(\sqrt{t}\zeta)d\zeta \nonumber
 \end{align}
where we use \eqref{eq:ineq_distance} to get the second inequality, the change of variable $y = (\tilde{\exp}_x \circ \iota) (\xi)$ to get the penultimate line, and $\zeta = \xi/\sqrt{t}$ to get the last one. Now we use the Taylor theorem with Laplace remainder : for any $\zeta \in \frac{(\iota \circ \tilde{\exp}_x)^{-1}(B_{t^{\eta}}(x))}{\sqrt{t}}\backslash \mathbb{W}$ there exist $s_1,s_2 \in (0,\sqrt{t})$ such that
\[
\tilde{f}(0_d)-\tilde{f}(\sqrt{t}\zeta) = - \di_{0_d} \tilde{f} (\sqrt{t}\zeta)  -  \frac{1}{2}  \di^{(2)}_{\sqrt{s_1}\zeta} \tilde{f}  ((\sqrt{t}\zeta)^{(2)})  ,
\]
\[
\tilde{p}(\sqrt{t}\zeta) = \tilde{p}(0_d) + \di_{\sqrt{s_2}\zeta} \tilde{p}(\sqrt{t}\zeta),
\]
so that 
\begin{align*}
|\tilde{f}(0_d)-\tilde{f}(\sqrt{t}\zeta)|\tilde{p}(\sqrt{t}\zeta) & \le \sqrt{t} \, \tilde{p}(0_d) |\di_{0_d} \tilde{f} (\zeta)|  +  \frac{t \, \tilde{p}(0_d)}{2}  |\di^{(2)}_{\sqrt{s_1}\zeta} \tilde{f}  (\zeta^{(2)})| \\
& \phantom{=} + t \di_{\sqrt{s_2}\zeta} \tilde{p} (\zeta) |\di_{0_d} \tilde{f} (\zeta)|  +  \frac{t^{3/2}}{2}  \di_{\sqrt{s_2}\zeta} \tilde{p} (\zeta) |\di^{(2)}_{\sqrt{s_1}\zeta} \tilde{f}  (\zeta^{(2)})| \\
& \le \sqrt{t} \, \tilde{p}(0_d) \| \di_{0_d} \tilde{f}\|_{op} \|\zeta\|  +  \frac{t \, \tilde{p}(0_d) \|\zeta\|^2 }{2}  \sup_{z \in (\iota \circ \tilde{\exp}_x)^{-1}(B_{t^\eta}(x))\backslash W }\|\di^{(2)}_{z} \tilde{f}\|_{op}  \\
& \phantom{=} + \sup_{z \in (\iota \circ \tilde{\exp}_x)^{-1}(B_{t^\eta}(x))\backslash W } \left( t \|\zeta\|^2 \|\di_{0_d} \tilde{f} \|_{op} \| \di_{z} \tilde{p} \|_{op} +  \frac{t^{3/2}\|\zeta\|^3 }{2}  \| \di_{z} \tilde{p}\|_{op}  \|\di^{(2)}_{z} \tilde{f}\|_{op} \right) \\
& \le \sqrt{t} \, \tilde{p}(0_d) \| \di_{0_d} \tilde{f}\|_{op} \|\zeta\|  + t\|\zeta\|^2 C_{x,f,p}(t)
\end{align*}
where we have set
\begin{align*}
C_{x,f,p}(t) & \df \sup_{z \in (\iota \circ \tilde{\exp}_x)^{-1}(B_{t^\eta}(x))\backslash \mathbb{W} } \left( \frac{\tilde{p}(0_d) }{2}  \|\di^{(2)}_{z} \tilde{f}\|_{op}  + \|\di_{0_d} \tilde{f} \|_{op} \| \di_{z} \tilde{p} \|_{op}\right. \\
&  \qquad \qquad \qquad \qquad \qquad \qquad \qquad \qquad \left. + \frac{t^{1+\eta} }{2}  \| \di_{z} \tilde{p}\|_{op}  \|\di^{(2)}_{z} \tilde{f}\|_{op} \right).
\end{align*}
Note that as $t \downarrow 0$,
\[
C_{x,f,p}(t) \to \frac{\tilde{p}(0_d) }{2}  \|\di^{(2)}_{0_d} \tilde{f}\|_{op}  + \|\di_{0_d} \tilde{f} \|_{op} \| \di_{0_d} \tilde{p} \|_{op}.
\]
Then we get 
\begin{align*}
     |II(t)| &  \le  \frac{\tilde{p}(0_d) \| \di_{0_d} \tilde{f}\|_{op} }{\sqrt{t}} \int_{\frac{\tilde{\exp}_x^{-1}(B_{t^\eta}(x))}{\sqrt{t}}\backslash W} \exp\left(-\|\zeta\|^2 \right) \|\zeta\| d\zeta \\
     & + C_{x,f,p}(t) \int_{\frac{\tilde{\exp}_x^{-1}(B_{t^\eta}(x))}{\sqrt{t}}\backslash W} \exp\left(-\|\zeta\|^2 \right) \|\zeta\|^2 d\zeta.
\end{align*}
It follows from \eqref{eq:step2_2} and the dominated convergence theorem that the two previous integrals converge both to $0$ as $t \downarrow 0$. This yields \eqref{eq:step4} as desired. 
\end{proof}

\subsection{Euclidean calculation}  With a view to apply it to the expression of $I(t)$ established in \eqref{eq:I(t)}, we prove the next Euclidean result.  Recall that $c_\ell \df \Gamma((\ell + 1)/2)/2$ for any positive integer $\ell$, where $\Gamma$ is the Gamma function.

\begin{proposition}\label{prop:key}
Let $q \in \cC^{2}_{\ge 0}(\setB^d)$ and $h \in \cC^{3}(\setB^d)$ be such that
\begin{equation}\label{eq:p_f}
C_{q,h} \df \left( \max_{0 \le j \le 2 }  \sup_{\xi \in \setB^d}\|\di^{(j)}_{\xi}q \|_{op} + \max_{1 \le i \le 3 }  \sup_{\xi \in \setB^d}\|\di^{(i)}_{\xi}h \|_{op} \right)  < +\infty.
\end{equation}
Consider a cone $\cC \subset \setR^d$. For $\eta \in (0,1/2)$ and $t \in (0,1)$, define
\[
L_t^{\mathcal{C},\eta}h(0_d)= \frac{1}{t} \int_{ \setB^d_{t^\eta}\cap \mathcal{C}} e^{-\frac{\|y\|^2}{t}} (h(0_d)-h(y))q(y) \di y.
\]
Set $S \cC \df \cC \cap \mathbb{S}^{d-1}.$ Then as $t \downarrow 0$,
\begin{align}\label{eq:eucl_result}
    L_t^{\mathcal{C},\eta}h(0_d) & = - \frac{c_d}{\sqrt{t}} q(0_d) \partial_{v_{\mathcal{C}}} h(0_d)  - c_{d+1} \bigg( q(0_d) A_\cC h(0_d) +  [q,h]_\cC(0_d)\bigg)  + O(\sqrt{t}),
\end{align}
where 
\[
\partial_{v_{\mathcal{C}}}h(0_d) \df d_{0_d} h(v_\cC) \quad \text{with} \quad v_{\mathcal{C}} \df \int_{S^g \cC} \theta \di \sigma (\theta), 
\]
\[
    A_{\cC}h(0_d) \df \frac{1}{2}\int_{S^g\cC} \di_{0_d}^{(2)}h \left( \theta^{(2)} \right) \di \sigma(\theta) \qquad \text{and} \qquad 
    [q,h]_\cC(x) \df  \int_{S^g\cC} \di_{0_d}h(\theta)\di_{0_d}q(\theta) \di \sigma(\theta).
\]
\end{proposition}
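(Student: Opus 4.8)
The plan is to reduce the statement to a short list of one-dimensional Gaussian moment integrals, by combining a polar decomposition adapted to the cone $\cC$ with Taylor expansions of $f$ and $p$ at $0_d$. First I would use that $\cC$ is a cone to write, for integrable $h$,
\[
\int_{\setB^d_{t^\eta}\cap\cC} h(y)\,\di y = \int_{S^g\cC}\int_0^{t^\eta} h(r\theta)\,r^{d-1}\,\di r\,\di\sigma(\theta),
\]
the spherical coarea formula restricted to the sector $\cC$; in the situation coming from Theorem~\ref{th:1} the coordinates on $T_x\tilde M$ are chosen $g(x)$-orthonormal, so $g$ is the Euclidean structure and $\sigma$ is the usual surface measure of the cross-section $S^g\cC = \cC\cap\mathbb{S}^{d-1}$. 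Applying this with $h(y) = e^{-\|y\|^2/t}\,(f(0_d)-f(y))\,p(y)$ recasts $L_t^{\cC,\eta}f(0_d)$ as an iterated radial/angular integral.

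Next I would Taylor-expand at the origin: for $y\in\setB^d$,
\[
f(0_d)-f(y) = -\di_{0_d}f(y) - \tfrac12\,\di_{0_d}^{(2)}f(y^{(2)}) + R_f(y), \qquad p(y) = p(0_d) + \di_{0_d}p(y) + R_p(y),
\]
with $|R_f(y)| \le \tfrac16\,C_{p,f}\,\|y\|^3$ and $|R_p(y)| \le \tfrac12\,C_{p,f}\,\|y\|^2$ by \eqref{eq:p_f}. Expanding $(f(0_d)-f(y))\,p(y)$ and using the positive homogeneity $\di_{0_d}f(r\theta)=r\,\di_{0_d}f(\theta)$ and $\di_{0_d}^{(2)}f((r\theta)^{(2)})=r^2\,\di_{0_d}^{(2)}f(\theta^{(2)})$, I would sort the integrand by degree in $r$: the degree-$0$ part vanishes since $f(0_d)-f(0_d)=0$ (which is why no $1/t$-divergence survives), the degree-$1$ part is $-\,r\,p(0_d)\,\di_{0_d}f(\theta)$, the degree-$2$ part is a fixed bilinear expression in $\di_{0_d}f(\theta)\,\di_{0_d}p(\theta)$ and $p(0_d)\,\di_{0_d}^{(2)}f(\theta^{(2)})$, and the rest is bounded by $C\,r^3$ for $r\le 1$ with $C$ controlled by $C_{p,f}$.

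The core of the argument is then the radial integral $\int_0^{t^\eta}e^{-r^2/t}\,r^{d-1+m}\,\di r$ for $m=1,2,3$: the substitution $r=\sqrt t\,s$ turns it into $t^{(d+m)/2}\int_0^{t^{\eta-1/2}}e^{-s^2}s^{d-1+m}\,\di s$, and since $\eta<1/2$ one has $t^{\eta-1/2}\to+\infty$, so replacing the upper limit by $+\infty$ costs only an $O(e^{-t^{2\eta-1}})$ error, which is smaller than any power of $t$; using $\int_0^{\infty}e^{-s^2}s^{\ell}\,\di s = \tfrac12\Gamma((\ell+1)/2) = c_\ell$, the degree-$1$, degree-$2$ and degree-$3$ parts then contribute at the orders $t^{-1/2}$, $t^{0}$ and $t^{1/2}$ claimed in the statement. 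Integrating the degree-$1$ term over $S^g\cC$ and using linearity of $\di_{0_d}f$ gives $-\tfrac{c_d}{\sqrt t}\,p(0_d)\,\di_{0_d}f(v_\cC)=-\tfrac{c_d}{\sqrt t}\,p(0_d)\,\partial_{v_\cC}f(0_d)$, where $v_\cC=\int_{S^g\cC}\theta\,\di\sigma(\theta)$; integrating the degree-$2$ term and identifying the angular integrals with $A_\cC f(0_d)$ and $[p,f]_\cC(0_d)$ gives $-\,c_{d+1}\big(p(0_d)\,A_\cC f(0_d)+[p,f]_\cC(0_d)\big)$; and the cubic part is dominated by $C_{p,f}\,\sigma(S^g\cC)$ times the $m=3$ radial moment, hence is $O(\sqrt t)$, with the exponentially small Gaussian-tail corrections also subordinate.

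On the whole the computation is mechanical once the polar decomposition is in place; I expect the only genuinely delicate points to be (i) justifying the polar coarea formula and pinning down the correct cross-sectional measure for an arbitrary cone $\cC$ — harmless in the intended application, where $\cC=\tilde F_{0_d}\Omega$ is the (essentially convex) tangent cone of an LCDD boundary point and $g$ is the Euclidean structure — and (ii) controlling the Taylor remainders of $f$ and $p$ uniformly over the shrinking balls $\setB^d_{t^\eta}$, which is precisely what the finiteness of $C_{p,f}$ in \eqref{eq:p_f} delivers.
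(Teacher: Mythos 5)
Your proposal is correct and follows essentially the same route as the paper's proof: Taylor expansion of $f$ and $p$ at the origin to the orders permitted by \eqref{eq:p_f}, the Gaussian rescaling $y=\sqrt{t}\,z$, exponentially small control of the truncation at radius $t^{\eta}$ (using $\eta<1/2$), and polar coordinates to produce the constants $c_d$, $c_{d+1}$ and the angular functionals $v_\cC$, $A_\cC f$, $[p,f]_\cC$. The only cosmetic difference is the order of operations — you pass to polar coordinates before Taylor-expanding, whereas the paper expands first in Cartesian coordinates, controls the tail via the asymptotics of the incomplete Gamma function, and converts to polar only at the very end.
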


To prove this proposition, we need a preliminary lemma. We provide a proof for completeness. To this aim, we recall that the upper incomplete Gamma function is defined by
\[
\Gamma (s,x) \df \int _{x}^{\infty }t^{s-1}\,e^{-t}\,dt
\]
for any $s,x >0$. 

\begin{lemma}\label{lem:O}
Let $h \in \cC(\setR^d)$ be such that there exists $m \in \mathbb{N}$ for which
\[
C_h \df \sup_{z \in \setR^d \backslash \{0\}} \frac{|h(z)|}{\|z\|^m} < +\infty.
\]
Then for any $a\in(-1/2,0)$,
\[
\left| \int_{\cC \backslash \setB^d_{t^a}}  e^{-\|z\|^2}   h(z) \, \di z \right| = \sigma(S\cC) O ( t^{\frac{1}{2}}) \qquad \text{as $t \downarrow 0$.}
\]
\end{lemma}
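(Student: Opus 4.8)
The idea is to throw away the cone, reduce to a one–dimensional radial integral, and then invoke the fact that the Gaussian tail decays faster than any power. The first step: by hypothesis $|g(z)| \le C_g |z|^m$ for every $z \in \setR^d \setminus \{0\}$, and the function $z \mapsto e^{-|z|^2}|z|^m$ is integrable over all of $\setR^d$, so the integral in the statement is absolutely convergent and
\[
\left| \int_{\cC \backslash \setB^d_{t^a}} e^{-|z|^2} g(z) \di z \right| \le C_g \int_{\setR^d \backslash \setB^d_{t^a}} e^{-|z|^2} |z|^m \di z = C_g\, \sigma(\mathbb{S}^{d-1}) \int_{t^a}^{+\infty} e^{-r^2} r^{m+d-1} \di r,
\]
where the last equality is just polar coordinates. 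Since $a<0$ we have $R \df t^a \to +\infty$ as $t \downarrow 0$, so the whole matter is reduced to estimating a Gaussian tail integral $\int_R^{+\infty} e^{-r^2} r^{m+d-1}\di r$ as $R \to +\infty$.

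For the second step I would use the elementary splitting $e^{-r^2} \le e^{-R^2/2}\, e^{-r^2/2}$, valid for all $r \ge R$ (since $r^2 = r^2/2 + r^2/2 \ge R^2/2 + r^2/2$). This gives
\[
\int_{R}^{+\infty} e^{-r^2} r^{m+d-1} \di r \le e^{-R^2/2} \int_{0}^{+\infty} e^{-r^2/2} r^{m+d-1} \di r = \kappa_{m,d}\, e^{-R^2/2},
\]
with $\kappa_{m,d} \df \int_0^{+\infty} e^{-r^2/2} r^{m+d-1}\di r < +\infty$ a constant depending only on $m$ and $d$. Substituting $R = t^a$ yields the bound $C_g\, \sigma(\mathbb{S}^{d-1})\, \kappa_{m,d}\, e^{-t^{2a}/2}$ for the quantity in the statement.

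Finally, it remains to observe that $e^{-t^{2a}/2} = O(t^{1/2})$ — indeed $o(t^N)$ for every $N \in \mathbb{N}$ — as $t \downarrow 0$. Since $2a<0$, we have $t^{2a}\to +\infty$; writing $s = t^{2a}$ so that $t^{-1/2} = s^{-1/(4a)}$ with exponent $-1/(4a)>0$, the claim $t^{-1/2} e^{-t^{2a}/2} \to 0$ becomes $s^{\beta} e^{-s/2}\to 0$ as $s\to +\infty$ for $\beta = -1/(4a)>0$, which is the standard fact that exponential decay beats polynomial growth. This completes the argument. There is no serious obstacle here: the only points requiring a moment's care are checking that the domain enlargement from $\cC$ to $\setR^d$ and the passage to polar coordinates are legitimate (guaranteed by integrability of $e^{-|z|^2}|z|^m$), and recognizing that the hypothesis $a<0$ is precisely what turns this into a tail estimate — the stronger assumption $a\in(-1/2,0)$ is not actually needed for this lemma.
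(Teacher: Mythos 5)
Your proof is correct. The overall skeleton is the same as the paper's (bound $|g|$ by $C_g|z|^m$, pass to polar coordinates, reduce to a radial Gaussian tail with lower limit $t^a \to +\infty$), but you handle the tail integral differently. The paper keeps the cone, substitutes $\tau = r^2$, recognizes the result as the upper incomplete Gamma function $\Gamma\bigl(\tfrac{m+d}{2}, t^{2a}\bigr)$, and cites its asymptotics $\Gamma(s,x)\sim x^{s-1}e^{-x}$; you instead enlarge the cone to all of $\setR^d$ and use the elementary splitting $e^{-r^2}\le e^{-R^2/2}e^{-r^2/2}$ for $r\ge R$. Your route is more self-contained (no special-function asymptotics needed) at the cost of a slightly weaker explicit bound: you get $O(e^{-t^{2a}/2})$, whereas the paper's remark following the lemma records the sharper $O\bigl(t^{a(m+d-2)}e^{-t^{2a}}\bigr)$, which is used nowhere else, so nothing is lost for the purposes of Proposition \ref{prop:key}. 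Your observation that only $a<0$ is needed (not $a\in(-1/2,0)$) is also correct; the restricted range simply reflects the application, where $a=\eta-1/2$ with $\eta\in(0,1/2)$.
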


\begin{proof}
We have
\begin{align*}
\left| \int_{\cC \backslash \setB^d_{t^a}}  e^{-\|z\|^2}   h(z) \, \di z \right| & \le  \int_{\cC \backslash \setB^d_{t^a}}  e^{-\|z\|^2}   |h(z)|  \, \di z \le C_g \int_{\cC \backslash \setB^d_{t^a}}   e^{-\|z\|^2} \|z\|^m   \, \di z.
\end{align*}
Using polar coordinates and then the change of variable $\tau=r^2$, we get
\begin{align*}
\int_{\cC \backslash \setB^d_{t^a}}   e^{-\|z\|^2}  \|z\|^m \, \di z & = \sigma(S\cC) \int_{t^{a}}^{+\infty} e^{-r^2} r^{m+d-1} \, \di r   \\
& =  \frac{\sigma(S\cC)}{2} \int_{t^{2a}}^{+\infty} e^{-\tau} \tau^{\frac{m+d}{2}-1} \, \di \tau \\
& = \frac{\sigma(S\cC)}{2} \, \Gamma \left( \frac{m+d}{2} , t^{2a} \right).
\end{align*}
By asymptotic property of the incomplete Gamma function (see e.g.~\cite{Pinelis})
\[
\Gamma \left( \frac{m+d}{2} , t^{2a} \right) \underset{t \downarrow 0}{\sim}  t^{a(m+d-2)} e^{-t^{2a}}.
\]
Therefore, for small enough $t>0$,
\[
\left| \int_{\cC \backslash \setB^d_{t^a}}  e^{-\|z\|^2}   F(z) \, \di z \right| \le C_g \, \sigma(S\cC) \,  t^{a(m+d-2)} e^{-t^{2a}}.
\]
The result follows from the fact that $C_h t^{a(m+d-2)} e^{-t^{2a}} = O(\sqrt{t})$ as $t \downarrow 0$.
\end{proof}

\begin{remark}
Our proof yields the more precise estimate:
\[
\left| \int_{\cC \backslash \setB^d_{t^a}}  e^{-\|z\|^2}   g(z) \, \di z \right| = \sigma(S\cC) O \left( t^{a(m+d-2)} e^{-t^{2a}}\right) \qquad \text{as $t \downarrow 0$.}
\]
\end{remark}

We are now in a position to prove Proposition \ref{prop:key}.

\begin{proof}
\textbf{Step 1}. Let us prove that, as $t \downarrow 0$,
\begin{align}\label{eq:step1}
    L_t^{\mathcal{C},\eta}h(0_d) & = - t^{-\frac{1}{2}} q(0_d) \di_{0_d}h\left(  \int_\cC e^{-\|z\|^2} z \di z\right) - \int_\cC e^{-\|z\|^2} \di_{0_d}h(z)\di_{0_d}q(z) \di z \nonumber \\
    & \phantom{=} -  \frac{q(0_d)}{2}\di_{0_d}^{(2)} h \left(\int_\cC e^{-\|z\|^2} z^{(2)} \di z \right) + O(\sqrt{t}).
\end{align}
The Taylor theorem with Laplace remainder implies that for any $y \in \setB^d$ there exist $\xi_y, \zeta_y \in \setB^d$ such that
\[
h(0_d)-h(y) = - \di_{0_d} h (y)  -  \frac{1}{2}  \di^{(2)}_{0_d} h  (y^{(2)})  - \frac{1}{6}  \di^{(3)}_{\xi_y} h (y^{(3)}),
\]
\[
q(y) = q(0_d) + \di_{0_d} q (y) +  \frac{1}{2}  \di ^{(2)}_{\zeta_y} q (y^{(2)}).
\]
Then for any $t \in (0,1)$,
\begin{align}\label{eq:key1}
L_t^{\cC,\eta} h(0_d) & = \sum_{i=1}^{3}\sum_{j=0}^{2} I_{t}(i,j) 
 \end{align}
 where for any $i \in \{1,2,3\}$ and $j \in \{0,1,2\}$,
 \[
 I_{t}(i,j)   \df - \frac{1}{i!j!t^{\frac{d}{2}+1}} \int_{\mathcal{C} \cap \setB^d_{t^\eta}} e^{-\frac{|y|^2}{t}} h_{i,j}(y) \di y.
 \]
Here we have defined, for any $y \in \setB^d$,
\[
\begin{array}{ll}
  h_{i,j}(y) \df \di_{0_d}^{(i)} h (y^{(i)})   \di_{0_d}^{(j)} q(y^{(j)})   &  \text{for $(i,j)  \in \{1,2\} \times \{0,1\}$}, \\
h_{3,j}(y) \df \di_{\xi_y}^{(3)} h (y^{(3)})   \di_{0_d}^{(j)} q(y^{(j)})  & \text{for $j\in \{0,1\}$},\\ 
h_{i,2}(y) \df \di_{0_d}^{(i)} h (y^{(i)})   \di_{\zeta_y}^{(2)} q(y^{(2)}) & \text{for $i \in \{1,2\}$},\\
h_{3,2}(y) \df \di_{\xi_y}^{(3)} h (y^{(3)})   \di_{\zeta_y}^{(2)} q(y^{(2)}). & 
\end{array}
\]
On one hand, if $i=3$ or $j=2$, we know from \eqref{eq:p_f} that for any $y \in \setB^d$,
\[
|h_{i,j}(y)| \le C_{q,h}|y|^{i+j}
\]
hence 
\begin{align*}
    |I_{t}(i,j)|  & \le\frac{1}{t^{\frac{d}{2}+1}} \int_{\mathcal{C} \cap \setB^d_{t^\eta}} e^{-\frac{|y|^2}{t}} |h_{i,j}(y)| \di y \le \frac{C_{q,h}}{t^{\frac{d}{2}+1}} \int_{\mathcal{C} \cap \setB^d_{t^\eta}} e^{-\frac{|y|^2}{t}} |y|^{i+j} \di y \\
    & = C_{q,h} t^{\frac{i+j}{2}-1} \int_{\mathcal{C} \cap \setB^d_{t^{\eta-1/2}}} e^{-\|z\|^2} \|z\|^{i+j} \di z \le C_{q,h} C_{\mathcal{C},i+j} t^{\frac{i+j}{2}-1}
\end{align*}
where we have used the change of variable $z= y/\sqrt{t}$ and defined the constant
\[
 C_{\mathcal{C},k} \df \int_{\mathcal{C}} e^{-\|z\|^2} \|z\|^{k} \di z = \sigma(S\cC) \Gamma((k+d)/2)
\]
for any integer $k$. Since $i=3$ or $j=2$ implies $i+j\ge 3$, we have $(i+j)/2-1 \ge 3/2-1 = 1/2$, so that
\begin{align}\label{eq:key2}
\sum_{(i,j) \notin \{1,2\} \times  \{0,1\}} I_{t}(i,j) = O(\sqrt{t}) \qquad \text{as $t \downarrow 0$.}
\end{align}

On the other hand, if $(i,j) \in \{1,2\} \times  \{0,1\}$, then for any $z \in \setR^d$ and $t>0$,
\[
h_{i,j}(\sqrt{t}z) = t^{\frac{i+j}{2}} h_{i,j}(z) \quad \text{and} \quad  |h_{i,j}(z)| \le C_{q,h} \|z\|^{i+j} \text{ by \eqref{eq:p_f}}.
\]
Then the change of variable $z= y/\sqrt{t}$ yields that
\begin{align*}
    I_{t}(i,j)  & = \frac{1}{t} \int_{\mathcal{C} \cap \setB^d_{t^{\eta-1/2}}} e^{-\|z\|^2} h_{i,j}(\sqrt{t}z) \di z = t^{\frac{i+j}{2}-1} \int_{\mathcal{C} \cap \setB^d_{t^{\eta-1/2}}} e^{-\|z\|^2} h_{i,j}(z) \di z.
\end{align*}
By Lebesgue dominated convergence theorem,
\[
\int_{\mathcal{C} \cap \setB^d_{t^{\eta-1/2}}} e^{-\|z\|^2} h_{i,j}(z) \di z \to \int_{\mathcal{C}} e^{-\|z\|^2} h_{i,j}(z) \di z \qquad \text{ as $t \downarrow 0$.}
\]
Moreover, by Lemma \ref{lem:O},
\begin{align*}
\left| \int_{\mathcal{C} \backslash  \setB^d_{t^{\eta-1/2}}} e^{-\|z\|^2} h_{i,j}(z) \di z \right| = O( t^{\frac{1}{2}} ).
\end{align*}
Then 
\[
I_{t}(i,j) = t^{\frac{i+j}{2}-1}  \int_{\mathcal{C} } e^{-\|z\|^2} h_{i,j}(z) \di z + O( \sqrt{t} ).
\]
Now
\begin{align*}
\int_{\mathcal{C} } e^{-\|z\|^2} h_{1,0}(z) \di z & = q(0_d)  \int_\cC e^{-\|z\|^2} \di_{0_d}h(z) \di z \\
\int_{\mathcal{C} } e^{-\|z\|^2} h_{1,1}(z) \di z & =  \int_\cC e^{-\|z\|^2} \di_{0_d}h(z)\di_{0_d}q(z) \di z \\
\int_{\mathcal{C} } e^{-\|z\|^2} h_{2,0}(z) \di z & = q(0_d)\int_\cC e^{-\|z\|^2} \di_{0_d}^{(2)}h(z^{(2)}) \di z .
\end{align*}
This implies the desired result.

\textbf{Step 2.}
Using polar coordinates, we can write
\begin{align*}  \int_\cC e^{-\|z\|^2} z \di z =  \int_0^{+\infty} \int_{S^g\cC} e^{-\rho^2} \rho \theta \rho^{d-1} \di \rho \di \sigma(\theta) & = \left(\int_0^{+\infty} e^{-\rho^2} \rho^{d} \di \rho \right)\underbrace{\int_{S^g\cC} \theta \di \sigma(\theta)}_{=v_\cC}.
\end{align*}
The change of variable $\tau = \rho^2$ yields that 
\[
\int_0^{+\infty} e^{-\rho^2} \rho^{d}  \di \rho = \Gamma\left(\frac{d+1}{2}\right) = c_{d}.
\]
In the end, we get
\[
\int_\cC e^{-\|z\|^2} z \di z = c_{d} v_\cC.
\]
In a similar way,
\begin{align*}
\int_\cC e^{-\|z\|^2} z^{(2)} \di z & = \int_0^{+\infty} \int_{S^g\cC} e^{-\rho^2} (\rho \theta)^{(2)} \rho^{d-1} \di \rho \di \sigma(\theta)  = \int_0^{+\infty} \int_{S^g\cC} e^{-\rho^2} \theta^{(2)} \rho^{d+1} \di \rho \di \sigma(\theta)\\
 & = \left( \int_0^{+\infty} e^{-\rho^2} \rho^{d+1} \di \rho \right)\int_{S^g\cC} \theta^{(2)} \di \sigma(\theta) = c_{d+1} \int_{S^g\cC} \theta^{(2)} \di \sigma(\theta)
\end{align*}
and
\begin{align*}
\int_\cC e^{-\|z\|^2} \di_{0_d}h(z)\di_{0_d}q(z) \di z & = \int_0^{+\infty} \int_{S^g\cC} e^{-\rho^2} \di_{0_d}h(\rho \theta)\di_{0_d}q(\rho \theta) \rho^{d-1} \di \rho \di \sigma(\theta)\\
& =  \int_0^{+\infty} \int_{S^g\cC} e^{-\rho^2} \di_{0_d}h( \theta)\di_{0_d}q( \theta) \rho^{d+1} \di \rho \di \sigma(\theta)\\
& = \left( \int_0^{+\infty}  e^{-\rho^2} \rho^{d+1} \di \rho  \right) \int_{S^g\cC} \di_{0_d}h( \theta)\di_{0_d}q( \theta) \di \sigma(\theta)\\
& = c_{d+1}  \int_{S^g\cC} \di_{0_d}h( \theta)\di_{0_d}q( \theta) \di \sigma(\theta).
\end{align*}
Combined with \eqref{eq:step1}, the three previous calculations yield \eqref{eq:eucl_result} as desired.
\end{proof}

\begin{remark}\label{rem:further} The previous proof may be easily modified to get the following: for any integer $N \ge 2$,  let $q \in \cC^{N}_{\ge 0}(\setB^d)$ and $h \in \cC^{N+1}(\setB^d)$ be such that
\[
 \max_{0 \le j \le N}  \sup_{\xi \in \setB^d}\|\di^{(j)}_{\xi}q \|_{op} + \max_{1 \le i \le N+1 }  \sup_{\xi \in \setB^d}\|\di^{(i)}_{\xi}h \|_{op}   < +\infty.
\]
Then for any cone $\mathcal{C}$ in $\setR^d$ and $\eta \in (0,1/2)$, as $t \downarrow 0$
\[
L_t^{\cC,\eta}h(0_d) = \sum_{i=1}^{N+1} \sum_{j=0}^N \frac{t^{\frac{i+j}{2}-1}c_{d+i+j-1}}{i!j!} \int_{S^g\cC} \di_{0_d}^{(i)} h (\theta^{(i)})   \di_{0_d}^{(j)} q(\theta^{(j)}) \di \sigma(\theta) + O(t^{\frac{N-1}{2}}).
\]
Moreover, it should be pointed out that if $q(0_d)=0$, then $L_t^{\cC,\eta} h(0_d)$ always converges to $$ \int_\cC e^{-\|z\|^2} \di_{0_d}h(z)\di_{0_d}q(z) \di z.$$
\end{remark}

\subsection{Conclusion}

Let us now explain how to reach the conclusion of Theorem \ref{th:1}. From Lemma \ref{lem:localisation} and Proposition \ref{prop:localized}, we get
\[
L_tf(x) = \frac{1}{t} \int_{\mathbb{B}^d_{t^{\eta}}\cap \tilde{F}_{0_d}(\bar{\Omega})} e^{-\frac{\|\xi\|^2}{t}} (\tilde{f}(0_d) - \tilde{f}(\xi))q(\xi)\di \xi + o\left( \frac{1}{\sqrt{t}}\right) +  O(t^{-d/2-1}e^{-t^{2 \eta -1}}) \qquad \text{as $t\to 0$.}
\]

Since $\tilde{F}_{0_d}(\bar{\Omega}) \subset T_{0_d} \bar{\Omega}$, it follows from the definition of cusps that the previous integral is zero if $x$ is a cusp. This proves Theorem \ref{th:1} in this case, because all the integrals over $S^gI_xM$ vanish as well.

As for the other two cases, it follows from \eqref{eq:interiorF} for interior points, and from Proposition \ref{lem:tangent cones as epigraphs} and Proposition \ref{prop:at boundary of open sets, inward sector equals tangent cone} for LCDD border points, that we can replace $\tilde{F}_{0_d}(\bar{\Omega})$ by $I_{0_d}\bar{\Omega}$ in the previous domain of integration. Then we apply Proposition \ref{prop:key} with $\cC = I_{0_d}\bar{\Omega}$ to obtain that
\begin{align*}
    \frac{1}{t} \int_{\mathbb{B}^d_{t^{\eta}}\cap I_{0_d}\bar{\Omega}} e^{-\frac{\|\xi\|^2}{t}} (\tilde{f}(0_d) - \tilde{f}(\xi))q(\xi)\di \xi
\end{align*}
equals 
\[
- \frac{c_d}{\sqrt{t}} q(0_d) \partial_{v_{I_{0_d}\bar{\Omega}}} \tilde{f}(0_d)  - c_{d+1} \bigg(q(0_d) A_{I_{0_d}\bar{\Omega}} \tilde{f}(0_d) +  [q,\tilde{f}]_{I_{0_d}\bar{\Omega}}(0_d)\bigg)  + O(\sqrt{t})
\]
as $t \downarrow 0$. But
\[
q(0_d) = p(\tilde{\exp}_x(\iota(0_d))) \sqrt{\det g(\iota(0_d))} = p(x)
\]
and
\begin{align*}
\partial_{v_{I_{0_d}\bar{\Omega}}} \tilde{f}(0_d) = d_{0_d}\tilde{f}(v_{I_{0_d}\bar{\Omega}}) & = d_xf \left( d_{0_d}\tilde{\exp}_x\left( \int_{S^g I_{0_d}\bar{\Omega}} \theta \di \sigma (\theta)\right)\right)\\
& =
d_xf \left( \int_{S^g I_{0_d}\bar{\Omega}} d_{0_d}\tilde{\exp}_x(\theta) \di \sigma (\theta)\right)\\
& =
d_xf \left( \int_{S^gI_xM} \theta \di \sigma(\theta)\right) = d_xf(v_g(x)) = \partial_{v_g(x)}f(x),
\end{align*}
where we have used Lemma \ref{prop:inward sectors correspond to inward sectors via charts} to perform the change of variable yielding the last line. Likewise, we easily obtain that
\[
A_\cC \tilde{f}(0_d) = A_gf(x),
\]
\begin{align*}
[q,\tilde{f}]_{I_{0_d}\bar{\Omega}}(x) = [p,f]_g(x).
\end{align*}

\section{Asymptotic behavior of the extrinsic Gaussian Operator}\label{sec:extrinsic}

In this section, we prove Theorem \ref{th:2} without the refined estimates on the error term $\mathrm{Err}(t)$. See Section \ref{sec:refined} for these refined estimates. Let $M\subset \cl^D$ be a smooth $d$-dimensional submanifold with kinks endowed with the Riemannian metric $g$ induced by the Euclidean scalar product of $\cl^D$. Recall that the extrinsic Gaussian operator at time $t>0$  associated with a $\cC^2$ density $p$ on $M$ is given by
\[
L_t f(x) = \frac{1}{t^{d/2+1}} \int_M e^{-\frac{\|x-y\|_{\setR^D}^2}{t}} (f(x) - f(y)) p(y) \, \di \mathrm{vol_g}(y)
\]
for any $x \in M$ and $f \in \cC^3(M) \cap L^1(M,p\,\mathrm{vol}_g)$. Let $x \in M$ be either an interior point, a LCDD border point, or a cusp. 

Consider the orthogonal projection $\pi$ from $\setR^D$ onto the translated tangent space $x + T_xM$. Applying a rigid motion if needed, we can assume that $x=0_D$ and $x+T_xM=\setR^d \times \{0_{D-d}\} \simeq \setR^d$, so that $\pi$ can be seen as the orthogonal projection mapping a vector of $\setR^D$ onto its first $d$ coordinates. We let $\epsilon>0$ be such that $\pi$ is a smooth diffeomorphism, in the sense of (iii) in Definition \ref{def:diffeoMFK}, of $\mathbb{B}_\epsilon^D(x) \cap M$ onto its image
 $$\Omega \df \pi (\mathbb{B}_\epsilon^D(x) \cap M) \subset \ddim.$$
 Following \cite{CoifmanLafon2006}, for a generic $y \in \mathbb{B}_\epsilon^D \cap M$ we set $$u=(u_1 \dots u_d):=\pi(y).$$ Note that $\pi$ acts as a local chart centered at $x,$ so that $int(\Omega)$ is an open subset of $\ddim$ with $0_d=\pi(x)$ as interior or $\cC^0$ boundary point. In the latter case, $0_d$ is either a LCDD boundary point or a cusp for $\Omega$.
 
 Below, we express the Euclidean distance and the Riemannian volume measure on $M$ in the $u$-coordinates introduced above. We let $Q_{x,m}$ denote a generic homogeneous polynomial of degree $m.$

\begin{lemma}\label{lem:prep}
As $t \downarrow 0$, for any $u \in \pi(\mathbb{B}_{t^\eta}^D(x) \cap M)$,
 \begin{align*}
    \norm{\pi^{-1}(u)}_{\setR^D}^2 &= \norm{u}_{\ddim}^2 + Q_{x,4}(u) + Q_{x,5}(u) + O(t^{6\eta}) && \text{(metric comparison)} \\
    \rho(u) 
    &= 1 + Q_{x,2}(u) + Q_{x,3}(u) + O(t^{4\eta}) && \text{(infinitesimal volume comparison)}
\end{align*}
where $\rho \in L^1(\Omega,\mathcal{L}^d)$ is the density of the push-forward measure $\pi_\# \mathrm{vol}_g$ on $\Omega$ with respect to $\mathcal{L}^d$.
\end{lemma}
 
\begin{proof}
    It follows line by line from \cite[Appendix B, Lemma 7]{CoifmanLafon2006}, with the assumption that $\norm{y-x}< t^{\eta}$ instead of $t^{1/2}$ like they did, which and whose implications are both indeed \textit{weaker}, since $t^{\eta}>t^{1/2}$ for small $t>0$.
\end{proof}

We are now ready to prove Theorem \ref{th:2}. Recall that we consider $\eta \in (1/6, 1/2)$ and set $\tilde{f} = f \circ \pi^{-1}$ and $\tilde{p} = p \circ \pi^{-1}$.

\begin{proof}

By Lemma \ref{lem:localisation}, taking $\di$ to be the extrinsic Euclidean distance on $M,$ we can write 
\begin{equation}\label{eq:locali}
    L_tf(x)= \bar{L}_tf(x) + O(t^{-d/2-1}e^{-2\eta -1})
\end{equation}
with
\[
\bar{L}_tf(x):= \frac{1}{t^{d/2+1}}\int_{M \cap \setB^D_{t^{\eta}}(x)} e^{-\frac{\norm{x - y}_{\setR^D}^2}{t}}(f(x) - f(y)) p(y) \di \mathrm{vol}_g(y).
\]
Consider $t>0$ sufficiently small to guarantee that $t^{\eta}< \epsilon$. Since $\pi$ is a diffeomorphism from $M\cap \mathbb{B}^D_{t^{\eta}}(x) = M\cap \mathbb{B}^D_{\epsilon}(x) \cap \mathbb{B}^D_{t^{\eta}}(x)$ onto $\Omega \cap \pi (\mathbb{B}^D_{t_\eta}(x)) = \Omega \cap \mathbb{B}^d_{t_\eta}$, we can use the change of variable $\pi(y-x)=u$ and Lemma \ref{lem:prep} to get
\begin{align}
    \begin{split}
        t^{d/2+1} \bar{L}_t f(x)
        &= \int_{\Omega \cap \mathbb{B}^d_{t_\eta}} e^{-\left( \frac{\norm{u}_{\setR^d}^2}{t} + \frac{Q_{x,4}(u) + Q_{x,5}(u) + O(t^{6\eta})}{t} \right)} (\tilde{f}(0_d) - \tilde{f}(u))\tilde{p}(u)\\
        & \phantom{=} \times \left( 1 + Q_{x,2}(u) + Q_{x,3}(u) + O(t^{4\eta})\right) \, \di u.
    \end{split}
\end{align}
Set $\zeta:= u/\sqrt{t}$ and divide both sides by $t^{d/2+1}$ to obtain
\begin{align}
    \begin{split}
        \bar{L}_t f(x)
        &= \frac{1}{t} \int_{\frac{\Omega}{\sqrt{t}} \cap \mathbb{B}^d_{t^{\eta-1/2}}} e^{-\left( \norm{\zeta}^2 + \left( \frac{Q_{x,4}(\sqrt{t}\zeta) + Q_{x,5}(\sqrt{t}\zeta) + O(t^{6\eta})}{t} \right)  \right)} (\tilde{f}(0_d) - \tilde{f}(\sqrt{t}\zeta))\tilde{p}(\sqrt{t}\zeta)\\
        &\phantom{=}  \times \left( 1 + Q_{x,2}(\sqrt{t}\zeta) + Q_{x,3}(\sqrt{t}\zeta) + O(t^{4\eta}) \right)  \, \di \zeta \\
        &= \frac{1}{t} \int_{\frac{\Omega}{\sqrt{t}} \cap \mathbb{B}^d_{t^{\eta-1/2}}} e^{-\left( \norm{\zeta}^2 +  tQ_{x,4}(\zeta) + t^{3/2}Q_{x,5}(\zeta) + O(t^{6\eta-1}) \right)  } (\tilde{f}(0_d) - \tilde{f}(\sqrt{t}\zeta))\tilde{p}(\sqrt{t}\zeta)\\
        &\phantom{=} \times \left( 1 + tQ_{x,2}(\zeta) + t^{3/2}Q_{x,3}(\zeta) + O(t^{4\eta} \right)  \, \di \zeta
    \end{split}
\end{align}
where we use the homogeneity of the polynomials $Q_m$. Now
\begin{align*}
    e^{-\left( \norm{\zeta}^2 + \left( tQ_{x,4}(\zeta) + t^{3/2}Q_{x,5}(\zeta) + O(t^{6\eta-1})\right)  \right)} & =
e^{-\norm{\zeta}^2} \left( 1 + O(t^{min(1, 6\eta-1)})\right),\\
e^{-\norm{\zeta}^2} (1 + t Q_{x,2}(\zeta) + t^{3/2} Q_{x,3}(\zeta) + O(t^{4\eta})) & =e^{-\norm{\zeta}^2} ( 1 + O\big(t^{\min(1, 4\eta)}\big)),
\end{align*}
where both $O$ are independent of $\zeta \in \setR^d$. Set $\alpha:=\min(1, 6\eta-1)$ and $\beta:=\min(1,4\eta)$. Then $\alpha>0$ and $\beta>2/3$ because $\eta > 1/6.$  With the above simplification and symbols, we get 
\begin{align}\label{eq:I(t)}
    \begin{split}
        \bar{L}_t f(x)
        &= \int_{\frac{\Omega}{\sqrt{t}} \cap \mathbb{B}^d_{t^{\eta-1/2}}} e^{-\norm{\zeta}^2}\left(1 + O(t^{\alpha}) \right) \frac{\tilde{f}(0_d) - \tilde{f}(\sqrt{t}\zeta)}{t}\tilde{p}(\sqrt{t}\zeta)(1 + O(t^{\beta})) \di \zeta\\
        & = (1+O(t^{\alpha}))\int_{\frac{\Omega}{\sqrt{t}} \cap \mathbb{B}^d_{t^{\eta-1/2}}} e^{-\norm{\zeta}^2}\frac{\tilde{f}(0_d) - \tilde{f}(\sqrt{t}\zeta)}{t}\tilde{p}(\sqrt{t}\zeta)(1 + O(t^{\beta})) \di \zeta\\
        & =: (1+O(t^{\alpha}))I(t).
    \end{split}
\end{align}
Using the Taylor expansion of $\tilde{f}$ and $\tilde{p}$, we get
\begin{align*}
    \begin{split}
        I(t)
        &=\int_{\frac{\Omega}{\sqrt{t}} \cap \mathbb{B}^d_{t^{\eta-1/2}}} e^{-\norm{\zeta}^2}  \left(-\frac{1}{\sqrt{t}} \nabla \tilde{f}(0_d) \cdot \zeta - \frac{1}{2}\text{Hess} \, \tilde{f}(0_d)(\zeta, \zeta) + o(1) \norm{\zeta}^2 \right) \\
        &\quad \times  (\tilde{p}(0_d) + \sqrt{t} \nabla \tilde{p}(0_d) \cdot \zeta + o(\sqrt{t})\norm{\zeta} ) (1 + O(t^{\beta}))\, \di \zeta. \\
    \end{split}
\end{align*}
Applying the asymptotic equalities
\[
1_{\frac{\Omega}{\sqrt{t}}}(\zeta) \, 1_{\mathbb{B}^d_{t^{\eta-1/2}}}(\zeta) = (1_{T^B_0(\Om)}(\zeta)+o(1))(1+o(1)) = 1_{T^B_0(\Om)}(\zeta)+o(1) \qquad \text{as $t \downarrow 0$,}
\]
the previous rewrites as
\begin{align}\label{eq:II(t)}
    \begin{split}
        I(t)
        &= \int_{T^B_0(\Om)} e^{-\norm{\zeta}^2} \left(-\frac{1}{\sqrt{t}} \nabla \tilde{f}(0_d) \cdot \zeta - \frac{1}{2}\text{Hess} \, \tilde{f}(0_d)(\zeta, \zeta) + o(1)\norm{\zeta}^2 \right) \\
        &\quad \times  (\tilde{p}(0_d) + \sqrt{t} \nabla \tilde{p}(0_d) \cdot \zeta + o(\sqrt{t})\norm{\zeta} ) (1 + O(t^{\beta})) \, \di \zeta + o\left( \frac{1}{\sqrt{t}}\right) \\
        &= \int_{T^B_0(\Om)} e^{-\norm{\zeta}^2} \left(-\frac{1}{\sqrt{t}} \nabla \tilde{f}(0_d) \cdot \zeta - \frac{1}{2}\text{Hess} \, \tilde{f}(0_d)(\zeta, \zeta) + o(1)\norm{\zeta}^2 \right) \\
        &\quad \times  (\tilde{p}(0_d) + \sqrt{t} \nabla \tilde{p}(0_d) \cdot \zeta + o(\sqrt{t})\norm{\zeta} )  \, \di \zeta + O(t^{\beta-1/2}) + o\left( \frac{1}{\sqrt{t}}\right)\\
        & =: II(t) + O(t^{\beta-1/2}).
    \end{split}
\end{align}
where we use the regularity of $f$ and $p$, which transfers to $\tilde{f}$ and $\tilde{p}$, to justify the $o(1/\sqrt{t})$ term in the second line. Now
\begin{align}\label{eq:II(t)=}
    \begin{split}
      II(t)
      &= -\frac{1}{\sqrt{t}} \tilde{p}(0_d) \nabla \tilde{f}(0_d) \cdot \int_{T^B_0(\Om)} \zeta  e^{-\norm{\zeta}^2} \di \zeta - \frac{1}{2}\int_{T^B_0(\Om)} e^{-\norm{\zeta}^2} \tilde{p}(0_d)\, \text{Hess} \, \tilde{f}(0_d)(\zeta, \zeta)\di \zeta \\
      & \quad  - \int_{T^B_0(\Om)} e^{-\norm{\zeta}^2} (\nabla \tilde{f}(0_d) \cdot \zeta) (\nabla \tilde{p}(0_d) \cdot \zeta)  \di \zeta + O(\sqrt{t}) + o\left( \frac{1}{\sqrt{t}}\right)\\
      & =  -\frac{c_d}{\sqrt{t}} ( \tilde{p}(0_d) \, \partial_{M}\tilde{f}(0_d) + o(1)) - c_{d+1} \bigg( \tilde{p}(0_d) A_M\tilde{f}(0_d) +  [\tilde{p},\tilde{f}]_M(0_d) \bigg) + O(\sqrt{t}),
    \end{split}
\end{align}
where we proceed as in the end of the previous section to obtain the last line. From \eqref{eq:I(t)} and \eqref{eq:II(t)}, we get that
\begin{align*}
    \bar{L}_t f(x) & = (1+O(t^\alpha))(II(t)+O(t^{\beta-1/2}))= II(t)[1+o(1)] + O(t^{1/6}),
\end{align*}
where we use that $\alpha>0$ and $\beta >1/6$ to get the second equality. Together with \eqref{eq:locali} and \eqref{eq:II(t)=}, the latter implies the desired result.
\end{proof}

\section{Refined estimates}\label{sec:refined}

In this section, we establish the refined estimates on $\mathrm{Err}(t)$ appearing in Theorem \ref{th:1} and Theorem \ref{th:2}. We provide a proof in the context of Theorem \ref{th:1} only, because the proof for Theorem \ref{th:2} follows along the same lines. Consider a smooth $d$-dimensional Riemannian manifold with kinks $M$ endowed with a $\cC^2$ Riemannian metric $g$, a density $p \in \cC_{\ge 0}^2(M)$, a function $f \in \cC^3(M)\cap L^1(M,p\,\mathrm{vol}_g)$, an exponent $\eta  \in (0,1/2)$, and a point $x \in M$. Let $(\tilde{M},\tilde{g})$ be the open Riemannian manifold of which $(M,g)$ is a submanifold, given by Theorem \ref{lem:Embedding a manifold with kinks into a manifold without boundary}. Consider $R>0$ such that the associated exponential map $\tilde{\exp}_x$ is a diffeomorphism from the $\tilde{g}_x$-ball of radius $R$ in $T_xM$ onto $\tilde{B}_R(x)$. Let $W$ be the set of initial velocities in $T_xM$ that yield a $\tilde{g}$-geodesic that coincide with the $g$-one on a neighborhood of $0$, see \eqref{eq:W}. It follows from Section \ref{sec:intrinsic} (see \eqref{eq:error},\eqref{eq:II} in particular) that the error term $\mathrm{Err}(t)$ is equal to
\[
\sqrt{t} II(t) = \frac{1}{t^{(d+1)/2}} \int_{B_{t^\eta}(x)\backslash \tilde{\exp}_x(W\cap \mathbb{B}_R^d)} \exp\left(-\frac{\di^2(x,y)}{t}\right) (f(x)-f(y)) p(y) \, d\mathrm{vol}_g(y).
\]

\subsection{Interior points} Assume that $x$ is an interior point. Then $I_xM=T_xM$ so $v_g(x)=0$ by symmetry. Moreover, $W = T_xM$, so $$B_{t^\eta}(x) \subset \tilde{\exp}_x(W \cap \mathbb{B}^d_R),$$ then the previous integration takes place on the empty set. Therefore, $II(t)$ and then $\mathrm{Err}(t)$ is zero for any small enough $t>0$. Thus
 \begin{align*}
    L_tf(x) & = \frac{1}{2} \Gamma\left(\frac{d}{2}+ 1\right) \bigg( p(x) A_gf(x) +  [p,f]_g(x) \bigg) + O(\sqrt{t}) + O(t^{-d}e^{-t^{2\eta-1}})
    \end{align*}
    as $t \downarrow 0$. That 
\[
\frac{1}{2} \Gamma\left(\frac{d}{2}+ 1\right) \bigg( p(x) A_gf(x) +  [p,f]_g(x) \bigg) = -\frac{\pi^{d/2}}{4} \, \Delta_{M,p}f(x)
\]
is a consequence of a direct calculation involving the second moments (see e.g.~\cite[p.~33--34]{MT})
\[
\int_{S^g_xM} \theta_i \theta_j \sigma(\theta) =  \frac{\delta_{ij}\mathcal{H}^{d-1}(\mathbb{S}^{d-1})}{d} \, \cdot 
\]

\subsection{$\cC^1$ boundary points} If $x$ is a $\cC^1$ boundary point, we obtain
\[
I_xM = \{ v \in T_xM : g(\partial_\nu f(x),v)\ge 0\}
\]
where $\partial_{\nu} f(x)$ is the inner normal derivative of $f$ at $x$,  hence $\partial_{v_g(x)} f(x)$ coincides with $\partial_{\nu} f(x)$. Then we act as in the previous subsection to get the right constant in \eqref{eq:c1bound}

\subsection{LTCDD points}

Now, assume that $x$ is an LTCDD point. Then there exist $\delta>0$ and $\gamma : \setR^{d-1} \to \setR$ with $\gamma(0_{d-1})=0$ such that the set $\Omega \df (\iota \circ \tilde{\exp}_x)^{-1}(B_R(x))$ satisfies
$$
 \begin{cases}
        \bar{\Omega} \cap  \setB_\delta^d  = epi(\gamma)  \cap \setB_\delta^d, \nonumber\\
        \Omega \cap \setB_\delta^d  = \mathring{epi}(\gamma)  \cap \setB_\delta^d.
        \end{cases}
$$
Moreover, $\gamma$ admits first and second directional derivatives at $0_{d-1}$ in any direction $v' \in \setR^{d-1}$, and these derivatives are continuous with respect to $v'$. 
For any $\theta \in \mathbb{S}^{d-2}$, we set
\[
a(\theta):= \gamma'(0_{d_1};\theta), \qquad 
\kappa(\theta):=\gamma''(0_{d-1};\theta).
\]
We consider the number
\begin{equation}\label{eq:slicewise}
SC(x):=\int_{\mathbb{S}^{d-2}}
\frac{|\kappa(\theta)|}{\big(1+a(\theta)^{2}\big)^{\frac{d}{2}+1}}
\,d\sigma(\theta).
\end{equation}
Note that $SC(x)$ does not depend on the choice of $\gamma$ as above. We call this number the total slicewise curvature of $\partial M$ at $x$.

\textbf{Step 1.} We show that for any small enough $t>0$,
\begin{equation}\label{eq:step1sec6}
|\mathrm{Err}(t)| \le 2 p(x) \left( \|d_xf\|_{g_x} S_t + \sqrt{t}\|d_x^{(2)}f\|_{g_x}\tilde{S}_t \right)
\end{equation} 
where
\[
S_t \df \int_{\frac{\Omega \cap \mathbb{B}_\delta^d}{\sqrt{t}} \triangle T_{0_{d-1}}^B \bar{\Omega}} \|\zeta\| e^{-\|\zeta\|^2}\di \zeta, \qquad \tilde{S}_t \df \int_{\frac{\Omega \cap \mathbb{B}_\delta^d}{\sqrt{t}} \triangle T_{0_{d-1}}^B \bar{\Omega}} \|\zeta\|^2 e^{-\|\zeta\|^2}\di \zeta,
\]
 and $\triangle$ stands for the symmetric difference.

Acting as in \eqref{eq:obtained}, we get that for any small enough $t>0$,
\begin{align*}
|\mathrm{Err}(t)| = \sqrt{t} |II(t)| & \le  \frac{\|p\|_{L^\infty(B_{t^\eta}(x))}}{t^{(d+1)/2}} \int_{B_{t^\eta}(x)\backslash \tilde{\exp}_x(W\cap \mathbb{B}_R^d)} \exp\left(-\frac{\tilde{\di}^2(x,y)}{t}\right) |f(x)-f(y)| \, d\mathrm{vol}_g(y)
\\
& = \frac{\|p\|_{L^\infty(B_{t^\eta}(x))}}{t^{(d+1)/2}} \int_{(\iota \circ \tilde{\exp}_x)^{-1}(B_{t^\eta}(x))\backslash \mathbb{W}} \exp\left(-\frac{\|\xi\|^2}{t}\right) |\tilde{f}(0)-\tilde{f}(\xi)| d\xi.
\end{align*}
The second-order Taylor expansion  of $\tilde{f}$ at $0_d$ implies that for any $\xi \in (\iota \circ \tilde{\exp}_x)^{-1}(B_{t^\eta}(x))$,
\[
|\tilde{f}(0_d)-\tilde{f}(\xi)| \le  \|\nabla \tilde{f}(0_d)\|\|\xi\| + \|\xi\|^2 \Lambda(t)
\]
with $\Lambda(t) \df \sup_{(\iota \circ \tilde{\exp}_x)^{-1}(B_{t^\eta}(x))}\|\mathrm{Hess} \tilde{f}\|$. Since  $\|\nabla \tilde{f}(0_d)\|=\|d_xf\|_{g_x}$, we get that
\begin{align*}
|\mathrm{Err}(t)|  & \le  \frac{\|p\|_{L^\infty(B_{t^\eta}(x))}\|d_xf\|_{g_x}}{t^{(d+1)/2}} \int_{(\iota \circ \tilde{\exp}_x)^{-1}(B_{t^\eta}(x))\backslash \mathbb{W}} \exp\left(-\frac{\|\xi\|^2}{t}\right) \|\xi\| d\xi \\
& + \frac{\|p\|_{L^\infty(B_{t^\eta}(x))}\Lambda(t)}{t^{(d+1)/2}} \int_{(\iota \circ \tilde{\exp}_x)^{-1}(B_{t^\eta}(x))\backslash \mathbb{W}} \exp\left(-\frac{\|\xi\|^2}{t}\right) \|\xi\|^2 d\xi.
\end{align*}
Performing the change of variable $\zeta=\xi/\sqrt{t}$, and using that $\mathbb{W}$ is a cone to get
\[
\frac{(\iota \circ \tilde{\exp}_x)^{-1}(B_{t^\eta}(x))\backslash \mathbb{W}}{\sqrt{t}} = \frac{(\iota \circ \tilde{\exp}_x)^{-1}(B_{t^\eta}(x))}{\sqrt{t}}\backslash \mathbb{W} \subset \frac{\Omega \cap \mathbb{B}_\delta^d}{\sqrt{t}} \backslash \mathbb{W},
\]
we obtain
\begin{align*}
|\mathrm{Err}(t)|  & \le  \|p\|_{L^\infty(B_{t^\eta}(x))}\|d_xf\|_{g_x} \int_{\frac{\Omega \cap \mathbb{B}_\delta^d}{\sqrt{t}}\backslash \mathbb{W}} \exp\left(-\|\zeta\|^2\right) \|\zeta\| d\zeta\\
& + \sqrt{t}\|p\|_{L^\infty(B_{t^\eta}(x))}\Lambda(t)\int_{\frac{\Omega \cap \mathbb{B}_\delta^d}{\sqrt{t}}\backslash \mathbb{W}} \exp\left(-\|\zeta\|^2\right) \|\zeta\|^2 d\xi.
\end{align*}
Since $\mathbb{W}=T_{0_{d-1}}^B \bar{\Omega}$, we can bound the previous integrals from above by $S_t$ and $\tilde{S}_t$ respectively. Then the conclusion \eqref{eq:step1sec6} follows from the obvious facts :
\[
\|p\|_{L^\infty(B_{t^\eta}(x))} \to p(x), \qquad \Lambda(t) \to \|d_x^{(2)}f\|_{g_x}.
\]

 \textbf{Step 2.} We show that for small enough $t>0$,
\begin{equation}\label{eq:step2sec6}
\frac{S_t}{\sqrt{t}} \le \Gamma(d+2)SC(x).
\end{equation}

Write $\zeta=(\zeta',z)$ with $\zeta'=r\,\theta$ for some $r\ge 0$ and $\theta\in \mathbb{S}^{d-2}$.
Then the Lebesgue measure decomposes as $$d\zeta=r^{d-2}\,dr\,d\sigma(\theta)\,dz.$$
Since $\bar{\Omega} \cap \mathbb{B}_\delta^d = epi(\gamma)\cap \mathbb{B}_\delta^d$, we get that
\[
\frac{\Omega\cap \mathbb{B}_\delta^d}{\sqrt{t}} = \{ (\zeta',z) \in \setR^{d} : \zeta' = r\theta \text{ for some $r \in (0,\delta)$ and $\theta \in \mathbb{S}^{d-2}$, and } z \ge \gamma_t(r,\theta) \}
\]
where
$$
\gamma_{t}(r,\theta)  :=\frac{\gamma(\sqrt{t}\,r\theta)}{\sqrt{t}} \, \cdot 
$$
Moreover, by Proposition \ref{lem:tangent cones as epigraphs},
\[
T_{0_{d-1}} \Omega = \{(\zeta',z) \in \setR^{d} : \zeta' = r\theta \text{ for some $r>0$ and $\theta \in \mathbb{S}^{d-2}$, and } z\ge a(\theta)r \}.
\]
Therefore, the symmetric difference
$([\Omega\cap \mathbb{B}_\delta^d]/\sqrt{t})\,\triangle\,T_{0_{d-1}}\Omega$
intersects each slice $P_\theta \df \mathrm{Span}(\theta,e_d)$ in a narrow strip that can be itself decomposed into segments $S_t(r,\theta)$ with extremal values $a(\theta)r$ and $\gamma_{t}(r,\theta)$. Thus
\begin{equation}\label{eq:exp}
S_t = \int_{\mathbb{S}^{d-2}} 
\int_{0}^{\delta}
\int_{S_t(r,\theta)}
\,\rho(r,z)\,dz\,r^{d-2}dr\,d\sigma(\theta) \qquad \text{with $\rho(r,z) = \sqrt{r^2+z^2}\, e^{-r^2-z^2}$.}
\end{equation}
Denote by $\ell_t(r,\theta)$ the length of the segment $S_t(r,\theta)$, and set $$C_t(r,\theta)\df \sup_{\eta \in S_t(r,\theta)} |\partial_\eta\rho(r,\eta)|.$$
Apply the mean value to $\rho(r,\cdot)$ on the segment $S_t(r,\theta)$ to get
\[
\rho(r,z) \le \rho(r,a(\theta)r) +  C_t(r,\theta)\ell_t(r,\theta).
\]
Use this estimate in \eqref{eq:exp} to get
\begin{align*}
    S_t & \le \int_{\mathbb{S}^{d-2}} 
\int_{0}^{\delta} \bigg( 
\rho(r,a(\theta)r) \ell_t(r,\theta) + C_t(r,\theta)\ell_t^2(r,\theta)\bigg) \,r^{d-2}dr\,d\sigma(\theta).
\end{align*}

For any $\theta \in \mathbb{S}^{d-2}$, use the Taylor expansion of the $\cC^2$ map $r \mapsto \gamma(r\theta)$ to obtain that, for small enough $r>0$,
\begin{align*}
|\gamma(r\theta) - a(\theta)\,r | \le  \bigg(\tfrac{1}{2}\kappa(\theta)\, + \omega(r) \bigg) r^2
\end{align*}
where $\omega(r)>0$ is a non-decreasing function uniform in $\theta$ that goes to $0$ as $r$ goes to $0$. This implies 
$$
\ell_t(r,\theta) = |\gamma_{t}(r,\theta)-a(\theta)\,r|\le  \tfrac{1}{2}\kappa(\theta)\,\sqrt{t} r^{2} + o(\sqrt{t}) \qquad \text{as $t \downarrow 0$,}
$$
where $o(\sqrt{t})$ is uniform in $\theta$. Therefore,
\begin{equation}\label{eq:step2sec6_1}
S_t \le \frac{\sqrt{t}}{2}\int_{\mathbb{S}^{d-2}} 
\int_{0}^{\delta} \rho(r,a(\theta)r) \,\kappa(\theta)\, \,r^{d}dr\,d\sigma(\theta) + o(\sqrt{t}) \qquad \text{as $t \downarrow 0$.}
\end{equation}
Since $\rho(r,a(\theta) r) = r\sqrt{1 + a(\theta)}e^{-r\sqrt{1 + a(\theta)}}$, we obtain
\begin{align*}
    \int_{0}^{\delta} \rho(r,a(\theta)r) \, \,r^{d}dr  = \sqrt{1 + a(\theta)} \int_{0}^{\delta} \, e^{-r\sqrt{1 + a(\theta)}} \,r^{d+1}dr & =  \int_{0}^{\delta/\sqrt{1 + a(\theta)}} \,\frac{e^{-\tau }\tau^{d+1}d\tau}{(1 + a(\theta))^{\frac{d+1}{2}}} \le \frac{\Gamma(d+2)}{(1 + a(\theta))^{\frac{d}{2}+1}}\cdot
\end{align*}
Thus 
\[
\int_{\mathbb{S}^{d-2}} 
\int_{0}^{\delta} \rho(r,a(\theta)r) \,\kappa(\theta)\, \,r^{d}dr\,d\sigma(\theta) \le \Gamma(d+2) \int_{\mathbb{S}^{d-2}} 
\frac{\kappa(\theta)}{(1 + a(\theta))^{\frac{d}{2}+1}}\, d\sigma(\theta) = \Gamma(d+2) SC(x).
\]
Combined with \eqref{eq:step2sec6_1}, this yields that
\[
\frac{S_t}{\sqrt{t}} \le \frac{\Gamma(d+2) SC(x)}{2} + o(1) \qquad \text{as $t \downarrow 0$,}
\]
which implies the desired \eqref{eq:step2sec6}.

\textbf{Step 3.} Acting as in the previous step, we show that
\begin{equation}\label{eq:step3sec6}
\tilde{S}_t = O(\sqrt{t}) \qquad \text{as $t \downarrow 0$}.
\end{equation}

This follows from writing
\[
\tilde{S}_t = \int_{\mathbb{S}^{d-2}} 
\int_{0}^{\delta}
\int_{S_t(r,\theta)}
\,\tilde{\rho}(r,z)\,dz\,r^{d-2}dr\,d\sigma(\theta) \qquad \text{with $\tilde{\rho}(r,z) = (r^2+z^2)\, e^{-r^2-z^2}$}
\]
and proceed as previously. We left the details to the reader.

\textbf{Step 4.} We conclude. From \eqref{eq:step3sec6} we get that for any small enough $t>0$,
\[
\tilde{S}_t \le \Gamma(d+2) SC(x).
\]
Combined with \eqref{eq:step1sec6} and \eqref{eq:step2sec6}, this yields the desired \eqref{eq:refined}.

\section{Concentration estimates}\label{sec:concentration}

The goal of this section is to establish Theorem \ref{th:2}.

\subsection{\texorpdfstring{$\alpha$}{alpha}-Subexponential Random Variables}

We begin with defining the class of $\alpha$-subexponential random variables introduced by G\"otze, Sambale and Sinulis in \cite{GSS}.

\begin{definition}\label{def:alphasub}
Let $Z$ be a real-valued random variable. We say that $Z$ is $\alpha$-subexponential for some $\alpha > 0$ if there exist constants $K\ge 1$ and $C > 0$ such that :
\[
\mathbb{P}[|Z|\ge\epsilon] \le K\exp(-C{\epsilon}^{\alpha}) \qquad \forall \epsilon \ge 0.
\]
\end{definition}

\begin{remark}
    When $\alpha = 1$, the previous class coincides with classical subexponential random variables, while for $\alpha = 2$, it coincides with classical subgaussian random variables, see e.g.~\cite{Vershynin} for more details about these cases. Note also that $\alpha > 2$ implies subgaussian (i.e.~$\alpha = 2$). For values $\alpha < 1$, heavier tails are permitted, as in the case of Weibull random variables, for instance.
\end{remark}

The following lemma is useful for our purposes. We provide a quick proof for completeness.

\begin{lemma}\label{lem:alphasub}
    Let $Z$ be $\alpha$-subexponential for some $\alpha \in (0,2]$. Consider an a.s.~bounded real-valued random variable $W$, and $z\in \mathbb{R}$. Then the product random variable $WZ$ and the translated random variable $Z-z$ are both $\alpha$-subexponential.
\end{lemma}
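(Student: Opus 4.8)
The statement has two independent parts: the product $WZ$ and the translate $Z-z$ are $\alpha$-subexponential whenever $Z$ is. The strategy is in both cases to bound the relevant tail probability directly from the definition, reducing it to the tail bound $\mathbb{P}[|Z|\ge \epsilon]\le K\exp(-C\epsilon^\alpha)$ assumed for $Z$. Let me sketch each part.

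\textbf{The translate.} Since $W$ is a.s.\ bounded, fix $M>0$ with $|W|\le M$ a.s.\ and let $|z|\le M'$ be given as a constant. For $Z-z$ the idea is that for $\epsilon > 2|z|$ one has $\{|Z-z|\ge \epsilon\}\subset \{|Z|\ge \epsilon-|z|\}\subset\{|Z|\ge \epsilon/2\}$, so
\[
\mathbb{P}[|Z-z|\ge\epsilon]\le \mathbb{P}[|Z|\ge \epsilon/2]\le K\exp(-C(\epsilon/2)^\alpha)=K\exp(-(C2^{-\alpha})\epsilon^\alpha),
\]
which is of the required form with constants $K$ and $C2^{-\alpha}$; for $\epsilon\le 2|z|$ the probability is trivially at most $1$, and by enlarging $K$ (replacing $K$ by $\max\{K,e^{C2^{-\alpha}(2|z|)^\alpha}\}$, say) the bound $K\exp(-(C2^{-\alpha})\epsilon^\alpha)\ge 1$ holds on that range too. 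So one set of constants works for all $\epsilon\ge 0$.

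\textbf{The product.} For $WZ$, use $|WZ|\le M|Z|$ a.s., hence $\{|WZ|\ge\epsilon\}\subset\{|Z|\ge \epsilon/M\}$, so
\[
\mathbb{P}[|WZ|\ge\epsilon]\le\mathbb{P}[|Z|\ge \epsilon/M]\le K\exp(-C(\epsilon/M)^\alpha)=K\exp(-(CM^{-\alpha})\epsilon^\alpha),
\]
again of the desired form with the same $K$ and the new constant $CM^{-\alpha}$. No case split is even needed here since the inequality already holds for all $\epsilon\ge 0$. Finally, one remarks that if one wants a single statement it suffices to take the two resulting parameter pairs and combine them, but this is cosmetic.

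\textbf{Main obstacle.} There is essentially no obstacle: the proof is a routine monotonicity-of-tails argument, and the only mild care needed is the constant bookkeeping in the translate case (absorbing small $\epsilon$ into an enlarged prefactor $K$), plus making explicit that "a.s.\ bounded" supplies a deterministic constant $M$. One should perhaps note that the argument is insensitive to whether $W$ and $Z$ are independent, since only the pointwise bound $|WZ|\le M|Z|$ is used.
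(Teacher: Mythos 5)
Your proof is correct and follows essentially the same route as the paper: bound the product tail via $|WZ|\le M|Z|$ and the translate tail via $|Z-z|\ge\epsilon\Rightarrow|Z|\ge\epsilon-|z|$, then absorb constants into the exponent. In fact you are slightly more careful than the paper, which applies the bound $\exp(-C(\epsilon-|z|)^\alpha)\le\exp(-(C/2^\alpha)\epsilon^\alpha)$ without separating out the range $\epsilon\le 2|z|$ where one must enlarge $K$ as you do.
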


\begin{proof}
Let $m>0$ be such that $|W|\le m$ a.s. Then for any $\epsilon \ge 0$,
\[
\mathbb{P}(|WZ| \ge \epsilon) = \mathbb{P}(|Z| \ge \epsilon/|W|) \le  \mathbb{P}(|Z| \ge \epsilon/m) \le K\exp(-C{(\epsilon/m)}^{\alpha}).
\]
This shows that $WZ$ is $\alpha$-subexponential. As for $Z-z$, note that $|Z|+|z| \ge |Z-z|$ implies that
\[
\mathbb{P}(|Z-z| \ge \epsilon ) \le \mathbb{P}(|Z| \ge \epsilon - |z|).
\]
Thus $\mathbb{P}(|Z-z| \ge \epsilon ) \le 1$ when $\epsilon \le |z|$, while the $\alpha$-subexponential property of $Z$ implies that if $\epsilon > |z|$,
\[
\mathbb{P}(|Z-z| \ge \epsilon ) \le K \exp(-C (\epsilon - |z|)^\alpha) \le
\begin{cases}
K & \text{if $|z|< \epsilon \le 2 |z|$,}\\
K \exp(-C (\epsilon/2)^\alpha) & \text{if $\epsilon > 2 |z|$.}
\end{cases}
\]
Setting $K' \df K \exp(C|z|^\alpha)$, we obtain that
\[
K' \exp(-C \epsilon^\alpha) \ge \begin{cases}
1 & \text{if $\epsilon \le |z|$,}\\
K & \text{if $|z|< \epsilon \le 2 |z|$,}\\
K \exp(-C(\epsilon/2)^\alpha) & \text{if $\epsilon > 2 |z|$,}
\end{cases}
\]
so that $\mathbb{P}(|Z-z| \ge \epsilon ) \le K' \exp(-C \epsilon^\alpha)$ for any $\epsilon \ge 0$, as desired.
\end{proof}

The next statement is taken from \cite[Corollary 1.4 and Theorem 1.5]{GSS}. It is a concentration result for sample means of $\alpha$-subexponential random variables.

\begin{theorem}\label{th:concentration}
    Let $Z_1, \dots, Z_n \sim Z$ be iid and mean-zero real-valued random variables that are $\alpha$-subexponential for some $\alpha \in (0,2]$. Then there exist constants $c, C> 0$ such that for every $\epsilon \ge 0$,
    \begin{enumerate}[label=(\roman*)]
        \item if $\alpha \in (0,1]$, then
        \[\mathbb{P}\left(\left|\frac{1}{n}\sum_{i=1}^n Z_i\right| \ge \epsilon\right) \le C \exp\left( -c (n\epsilon)^{\alpha} \right),\]
        \item if $\alpha \in (1,2]$, then
        \[
        \mathbb{P}\left(\left|\frac{1}{n}\sum_{i=1}^n Z_i\right| \ge \epsilon\right) \le C \exp\left( -c (\sqrt{n} \epsilon)^{\alpha} \right).
        \]
    \end{enumerate}
\end{theorem}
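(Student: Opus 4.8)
The plan is to deduce Theorem~\ref{th:concentration} directly from the Bernstein-type concentration inequalities for $\alpha$-subexponential random variables established by Götze, Sambale and Sinulis in \cite{GSS}; the statement is essentially their result specialized to constant weights $a_i=1/n$. The estimates in \cite{GSS} are phrased in terms of the Orlicz quasi-norm $\|Z\|_{\psi_\alpha}\df\inf\{\lambda>0:\mathbb{E}\exp((|Z|/\lambda)^\alpha)\le 2\}$, so I would first check that Definition~\ref{def:alphasub} is equivalent to $\|Z\|_{\psi_\alpha}<\infty$. The implication needed here is that a tail bound $\mathbb{P}(|Z|\ge\epsilon)\le K\exp(-C\epsilon^\alpha)$ forces $\|Z\|_{\psi_\alpha}\le M$ for some $M=M(K,C,\alpha)$; this follows from the layer-cake identity $\mathbb{E}\exp((|Z|/\lambda)^\alpha)=1+\int_0^\infty\tfrac{\alpha}{\lambda}\big(\tfrac{s}{\lambda}\big)^{\alpha-1}e^{(s/\lambda)^\alpha}\mathbb{P}(|Z|\ge s)\,ds$, substituting the hypothesis and taking $\lambda$ large enough that the integral is $\le 1$. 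Thus the $Z_i$ are independent, centered, with $\|Z_i\|_{\psi_\alpha}\le M$ uniformly.

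Next I would apply \cite[Corollary~1.4]{GSS} for $\alpha\in(0,1]$ and \cite[Theorem~1.5]{GSS} for $\alpha\in(1,2]$ to the linear form $\sum_{i=1}^n a_iZ_i$. In both regimes these produce a bound of the shape
\[
\mathbb{P}\Big(\Big|\sum_{i=1}^n a_iZ_i\Big|\ge t\Big)\le 2\exp\!\Big(-c\,\min\!\Big(\tfrac{t^2}{\|a\|_2^2},\ \big(\tfrac{t}{\|a\|_{\ast}}\big)^{\alpha}\Big)\Big),\qquad c=c(M,\alpha),
\]
the difference between the two ranges of $\alpha$ being exactly which $\ell^p$-norm $\|a\|_\ast$ of the coefficient vector controls the heavy-tailed branch: $\|a\|_\ast=\|a\|_\infty$ for $\alpha\le 1$ (the ``one big jump'' scaling) and $\|a\|_\ast=\|a\|_2$ for $\alpha\in(1,2]$ (so that this branch merges with the Gaussian one as $\alpha\uparrow 2$). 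Putting $a=(1/n,\dots,1/n)$, so $\|a\|_2^2=1/n$, $\|a\|_\infty=1/n$, and $t=\epsilon$, the exponent $\min(\,\cdot\,)$ becomes $\min(n\epsilon^2,(n\epsilon)^\alpha)=\min(n\epsilon^2,n^\alpha\epsilon^\alpha)$ when $\alpha\le1$ and $\min(n\epsilon^2,(\sqrt n\,\epsilon)^\alpha)=\min(n\epsilon^2,n^{\alpha/2}\epsilon^\alpha)$ when $\alpha\in(1,2]$. In the regime governing the subsequent use of the theorem the controlling term is the second one (since $\alpha<2$ gives $n^\alpha\epsilon^\alpha\le n\epsilon^2$, resp.\ $n^{\alpha/2}\epsilon^\alpha\le n\epsilon^2$, once $n$ is large relative to $\epsilon$), so up to enlarging $C$ and shrinking $c$ one may discard the minimum and keep only it; this yields (1) and (2) respectively.

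The only genuinely delicate point is the bookkeeping just described: correctly identifying the $\ell^p$-norm of $(1/n,\dots,1/n)$ that enters the large-deviation branch of the GSS inequality in each range of $\alpha$ — this is precisely what turns into $n^\alpha$ versus $n^{\alpha/2}$ — and verifying that the constants $c,C$ can be chosen independently of $n$ after collapsing the minimum to a single term. A fully self-contained alternative would be a direct Chernoff argument, bounding $\mathbb{E}\exp(\lambda Z_i)$ for $|\lambda|$ small through $\|Z_i\|_{\psi_\alpha}$ and optimizing over $\lambda$; but this merely reproves the relevant special case of \cite{GSS}, so quoting their theorems is the efficient route.
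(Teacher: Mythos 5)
Your overall route coincides with the paper's, which gives no proof at all and simply quotes the statement from \cite{GSS}; your reconstruction (tail bound $\Rightarrow$ finite Orlicz quasi-norm $\|\cdot\|_{\psi_\alpha}$ via the layer-cake identity, then specialization of the GSS linear-form inequalities to $a=(1/n,\dots,1/n)$) is the intended one, and the Orlicz translation step is fine. The gap is exactly at the point you flag as delicate: ``discard the minimum and keep only the $\alpha$-branch.'' For case (2) this is legitimate, because whenever the Gaussian branch $n\epsilon^2$ is the smaller one we have $\sqrt{n}\,\epsilon<1$, hence $n^{\alpha/2}\epsilon^\alpha<1$, and the target inequality holds trivially after enlarging $C$ to at least $e^{c}$. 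For case (1) it fails: the two branches are $n\epsilon^2$ and $(n\epsilon)^\alpha$, and there is a whole regime (e.g.\ $\alpha=1$ and $n^{-1}\ll\epsilon\ll 1$) where the minimum is $n\epsilon^2$, which can be $o(1)$ while $(n\epsilon)^\alpha\to\infty$; no $n$-independent $c,C$ can convert $2\exp(-c_0 n\epsilon^2)$ into $C\exp(-c(n\epsilon)^\alpha)$ there. In fact statement (1), read with constants independent of $n$ (as the subsequent Proposition requires), is false: for iid Rademacher $Z_i$ (which are $\alpha$-subexponential for every $\alpha$) and $\epsilon_n=n^{-0.9}$, the central limit theorem gives $\mathbb{P}(|\tfrac1n\sum_i Z_i|\ge\epsilon_n)\to 1$, whereas $C\exp(-c(n\epsilon_n)^\alpha)=C\exp(-cn^{\alpha/10})\to 0$.

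So the correct statement extractable from \cite{GSS} must keep the two-branch exponent $\min\bigl(n\epsilon^2,(n\epsilon)^\alpha\bigr)$ for $\alpha\in(0,1]$ (and $\min\bigl(n\epsilon^2,n^{\alpha/2}\epsilon^\alpha\bigr)$ for $\alpha\in(1,2]$, where the second branch is redundant up to constants); the one-branch form in Theorem \ref{th:concentration}(1) is an over-simplification that neither you nor anyone else can prove. This does not harm the later applications: in the proofs of Theorem \ref{th:2} the bound is applied at level $t_n^{d/2+1}\epsilon$, and under the stated hypotheses both branches of the exponent (divided by $\ln n$ where relevant) tend to infinity, so convergence in probability and complete convergence survive with the corrected statement. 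A minor further point: for $\alpha\in(1,2]$ the sharp Gluskin--Kwapie\'n/GSS large-deviation branch involves $\|a\|_{\alpha/(\alpha-1)}$ rather than $\|a\|_2$; since $\|a\|_{\alpha/(\alpha-1)}\le\|a\|_2$ in this range, your $\|a\|_2$ version is a correct weaker consequence and yields the paper's (2) as you describe.
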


\begin{remark}
     Note that (1) matches up with the subexponential tail decay given by Bernstein's inequality in case $\alpha=1,$ while (2) coincides with the subgaussian tail decay of Hoeffding's inequality when $\alpha=2$. We refer to \cite{Vershynin} for a nice account on these two classical inequalities.
\end{remark}

\subsection{Convergence in probability}

In this section, we fall back onto our original setup, i.e.~we consider $\{X_i\}_{i\ge 1} \sim X$, a sequence of iid random variables taking values in a Riemannian $d$-dimensional manifold with kinks $(M,g)$. Assume the law $\mathbb{P}_X$ has a $\mathcal{C}^2$ density function $p$ with respect to $\mathrm{vol}_g$ and consider a function $f \in \cC^3(M)$ and a number $t>0$.

We are firstly interested in finding out the concentration of $L_{n,t}f(x)$ around its expected value $L_tf(x)$, for $x \in M$ being either an interior point, an LCDD border point, or a cusp. This is given by the following result.

\begin{proposition}
Assume \( f(X) \) is \(\alpha\)-subexponential for some \(\alpha \in (0,2]\). Then there exist constants \( C_1, C_2 > 0 \)  such that for all \( \epsilon \ge 0 \), 
\begin{enumerate}[label=(\roman*)]
        \item if $\alpha \in (0,1]$, then
        \[
        \mathbb{P} \left( \left| L_{n,t} f(x) - L_t f(x) \right| \ge \epsilon \right) 
        \le C_1 \exp \left( - C_2 \left( n\, t^{\frac{d}{2} + 1} \epsilon \right)^{\alpha} \right),
        \]
        \item if $\alpha \in (1,2]$, then
        \[
        \mathbb{P} \left( \left| L_{n,t} f(x) - L_t f(x) \right| \ge \epsilon \right) 
        \le C_1 \exp \left( - C_2 \left( \sqrt{n}\, t^{\frac{d}{2} + 1} \epsilon \right)^{\alpha} \right).
        \]
\end{enumerate}
Moreover, the constants \( C_1 \) and \( C_2 \) depend only on \( X \), \( f \), and \( \alpha \).
\end{proposition}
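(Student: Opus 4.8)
The plan is to express the fluctuation $L_{n,t}f(x)-L_tf(x)$ as a centered empirical mean of i.i.d.\ $\alpha$-subexponential summands and then invoke Theorem \ref{th:concentration}. Concretely, I would set
\[
Y_i \df e^{-\di^2(x,X_i)/t}\bigl(f(x)-f(X_i)\bigr), \qquad Z_i \df Y_i - \mathbb{E}[Y_i], \qquad i=1,\dots,n,
\]
so that the $Z_i$ are i.i.d.\ and centered and
\[
L_{n,t}f(x) - L_tf(x) = \frac{1}{t^{\frac{d}{2}+1}}\cdot\frac1n\sum_{i=1}^n Z_i .
\]
The crucial structural choice here is to keep the bandwidth prefactor $t^{-\frac{d}{2}-1}$ \emph{outside} the summands.

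First I would verify that each $Z_i$ is $\alpha$-subexponential with constants not depending on $t$. Since $f(X)$ is $\alpha$-subexponential by hypothesis, the translation part of Lemma \ref{lem:alphasub} shows $f(x)-f(X_i)$ is $\alpha$-subexponential; the Gaussian factor $e^{-\di^2(x,X_i)/t}$ takes values in $(0,1]$ almost surely, \emph{uniformly in $t$}, so the product part of Lemma \ref{lem:alphasub} gives that $Y_i$ is $\alpha$-subexponential, and one more application of the translation part handles the centering $Z_i = Y_i-\mathbb{E}[Y_i]$. Tracking the constants through these three steps, the $\alpha$-subexponential constants of $Z_i$ depend only on $X$, $f$ and $\alpha$ (not on $t$ or $n$).

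Then I would apply Theorem \ref{th:concentration} to $Z_1,\dots,Z_n$ with threshold $t^{\frac{d}{2}+1}\epsilon$, using that $\{\,|L_{n,t}f(x)-L_tf(x)|\ge \epsilon\,\}$ is the same event as $\{\,|\frac1n\sum_i Z_i|\ge t^{\frac{d}{2}+1}\epsilon\,\}$. This yields constants $C_1,C_2>0$ (depending only on $X$, $f$, $\alpha$, after possibly enlarging $C_1$ so the bound is trivially true at $\epsilon=0$) with
\[
\mathbb{P}\bigl(|L_{n,t}f(x)-L_tf(x)|\ge\epsilon\bigr) \le C_1\exp\!\bigl(-C_2\,n^{\alpha}(t^{\frac{d}{2}+1}\epsilon)^{\alpha}\bigr) = C_1\exp\!\bigl(-C_2(n\,t^{\frac{d}{2}+1}\epsilon)^{\alpha}\bigr)
\]
when $\alpha\in(0,1]$, and with $n^{\alpha/2}$ in place of $n^{\alpha}$, i.e.\ $C_1\exp(-C_2(\sqrt{n}\,t^{\frac{d}{2}+1}\epsilon)^{\alpha})$, when $\alpha\in(1,2]$. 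These are exactly the two claimed estimates.

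I do not expect a serious obstacle: the argument is short and essentially bookkeeping. The one point needing care — and the reason the final constants are independent of the bandwidth — is precisely the decision to keep $t^{-\frac{d}{2}-1}$ outside the summands, so that the Gaussian weight contributes merely an almost sure bound by $1$ and never enters the $\alpha$-subexponential constants of the $Z_i$; this is also exactly the configuration for which Lemma \ref{lem:alphasub} was tailored.
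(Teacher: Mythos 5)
Your proposal is correct and is essentially identical to the paper's proof: the paper also centers the summands $e^{-\di^2(x,X_i)/t}(f(x)-f(X_i)) - t^{d/2+1}L_tf(x)$, keeps the prefactor $t^{-d/2-1}$ outside the sum, invokes Lemma \ref{lem:alphasub} for $\alpha$-subexponentiality, and applies Theorem \ref{th:concentration} at threshold $t^{d/2+1}\epsilon$. Your remark that the Gaussian weight is bounded by $1$ uniformly in $t$, so the subexponential constants are bandwidth-independent, is a useful explicit justification of a step the paper leaves implicit.
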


\begin{proof}
    Let us provide details for the case $\alpha \in (0,1]$ only, the other one being analogous. Consider the zero-mean real-valued random variables
    \[
    Z_i(t) = e^{-\di^2(x,X_i)/t} (f(x)-f(X_i)) - t^{d/2+1}L_tf(x), \qquad i \ge 1.  \]
    We know from Lemma \ref{lem:alphasub} that these are $\alpha$-subexponential, hence we can apply Theorem \ref{th:concentration} : for any $\epsilon \ge 0$,
    \[
    \mathbb{P} \left( \left| L_{n,t} f(x) - L_t f(x) \right| \ge \epsilon \right) = \mathbb{P} \left(  \frac{1}{  t^{d/2+1}} \left| \frac{1}{n}\sum_{i=1}^n Z_i(t) \right| \ge \epsilon \right) \le  C_1 \exp \left( - C_2 \left( n\, t^{\frac{d}{2} + 1} \epsilon \right)^{\alpha} \right).
    \]
    
\end{proof}

Let us recall the operators
\begin{align*}
\mathcal{L}_tf(x) & \df -\frac{c_d}{\sqrt{t}}\left(  p(x) \, \partial_{v(x)}f(x)  + o(1)\right) - c_{d+1} \bigg( p(x) A_gf(x) +  [p,f]_g(x) \bigg)\\
\mathcal{D}f(x) & \df -c_d\, p(x) \, \partial_{v(x)}f(x) 
\end{align*}
identified in Theorem \ref{th:1}. Since the latter differ from $L_tf(x)$ of a $o(\sqrt{t})$ term, we immediately deduce the following from the previous proposition.

\begin{corollary}
Assume \( f(X) \) is \(\alpha\)-subexponential for some \(\alpha >0\). Then there exist constants \( C_1, C_2 > 0 \)  such that for all \( \epsilon \ge 0 \), we have :
\begin{enumerate}[label=(\roman*)]
        \item if $\alpha \in (0,1]$, then
        \[
        \mathbb{P} \left( \left| L_{n,t} f(x) - \mathcal{L}_t f(x) \right| \ge \epsilon \right) 
        \le C_1 \exp \left( - C_2 \left( n\, t^{\frac{d}{2} + 1} \epsilon \right)^{\alpha} \right),
        \]
        \item if $\alpha \in (1,2]$, then
        \[
        \mathbb{P} \left( \left| L_{n,t} f(x) - \mathcal{L}_t f(x) \right| \ge \epsilon \right) 
        \le C_1 \exp \left( - C_2 \left( \sqrt{n}\, t^{\frac{d}{2} + 1} \epsilon \right)^{\alpha} \right).
        \]
\end{enumerate}
\end{corollary}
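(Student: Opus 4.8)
The plan is to reduce the statement directly to the preceding Proposition via a triangle inequality, using Theorem~\ref{th:1} to compare the deterministic operator $L_t$ with $\mathcal{L}_t$. Concretely, for fixed $n$, $t$ and $\epsilon \ge 0$ I would write
\[
\bigl| L_{n,t}f(x) - \mathcal{L}_tf(x) \bigr| \le \bigl| L_{n,t}f(x) - L_tf(x) \bigr| + \bigl| L_tf(x) - \mathcal{L}_tf(x) \bigr|,
\]
in which only the first summand on the right is random. By Theorem~\ref{th:1}, the second summand equals $O(\sqrt t) + O(t^{-d}e^{-t^{2\eta-1}})$; since $\eta \in (0,1/2)$ forces $2\eta - 1 < 0$, the exponential factor beats every power of $1/t$, so this deterministic gap is in fact $O(\sqrt t)$, hence $o(1/\sqrt t)$. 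It therefore gets absorbed into the $o(1)$ built into the definition of $\mathcal{L}_tf(x)$: with the convention that $\mathcal{L}_tf(x)$ denotes the expression $-\tfrac{c_d}{\sqrt t}\bigl(p(x)\,\partial_{v(x)}f(x) + o(1)\bigr) - c_{d+1}\bigl(p(x)A_gf(x) + [p,f]_g(x)\bigr)$ for this particular choice of $o(1)$, we simply have $\mathcal{L}_tf(x) = L_tf(x)$ and the second summand vanishes identically.

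Granting this, the events $\{|L_{n,t}f(x) - \mathcal{L}_tf(x)| \ge \epsilon\}$ and $\{|L_{n,t}f(x) - L_tf(x)| \ge \epsilon\}$ coincide, and the two bounds of the Proposition apply verbatim, with the same constants $C_1, C_2$, which depend only on $X$, $f$ and $\alpha$ and in particular not on $t$ --- a feature that will be essential in the next subsection, where $t$ is replaced by a vanishing sequence $t_n$. (For $\alpha > 2$ one first observes that $f(X)$ is then also $2$-subexponential and applies the case $\alpha = 2$.) If instead one prefers to pin down a concrete realization of $\mathcal{L}_t$ and call $r(t) \df |L_tf(x) - \mathcal{L}_tf(x)| = O(\sqrt t)$ the resulting deterministic gap, then for $\epsilon \ge 2r(t)$ one has the inclusion $\{|L_{n,t}f(x)-\mathcal{L}_tf(x)|\ge\epsilon\} \subset \{|L_{n,t}f(x)-L_tf(x)|\ge \epsilon/2\}$ and applies the Proposition to $\epsilon/2$, at the cost of replacing $C_2$ by $C_2/2^{\alpha}$.

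Since the Proposition already carries all the probabilistic content --- it is where Lemma~\ref{lem:alphasub} and Theorem~\ref{th:concentration} are invoked --- there is essentially no obstacle here. The only point demanding a little care is the precise meaning of the identity $L_tf(x) = \mathcal{L}_tf(x)$, i.e.\ verifying that the remainder of Theorem~\ref{th:1} is genuinely dominated by $1/\sqrt t$ so that it can be hidden inside the asymptotic $o(1)$ factor multiplying $c_d/\sqrt t$. Once this is granted, the corollary is a one-line consequence of the Proposition.
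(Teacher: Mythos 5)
Your proposal is correct and follows essentially the same route as the paper, which simply observes that $\mathcal{L}_tf(x)$ differs from $L_tf(x)$ by an $O(\sqrt t)$ deterministic term (the exponentially small remainder being negligible since $2\eta-1<0$) and then invokes the preceding Proposition verbatim. If anything you are more careful than the paper, which does not address how the nonzero deterministic gap is reconciled with the claim ``for all $\epsilon\ge 0$'' with constants independent of $t$; your observation that one must either absorb the gap into the $o(1)$ defining $\mathcal{L}_t$ or restrict to $\epsilon\ge 2r(t)$ at the cost of $C_2\mapsto C_2/2^{\alpha}$ is exactly the right clarification.
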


We are now in a position to prove (i) in Theorem \ref{th:3}. Consider $t_n \to 0$ such that \( \sqrt{n}\, t_n^{\frac{d+1}{2}} \to \infty \) as \( n \to \infty \). From the previous corollary, we obtain that for any $\epsilon \ge 0$ and $t>0$,
\[
\mathbb{P} \left( \left| \sqrt{t}L_{n,t} f(x) - \mathcal{D} f(x) \right| \ge \epsilon \right) 
        \le \begin{cases}
        C_1 \exp \left( - C_2 \left( n\, t^{\frac{d+1}{2}} \epsilon \right)^{\alpha} \right) & \text{if $\alpha \in (0,1]$},\\
        C_1 \exp \left( - C_2 \left( \sqrt{n}\, t^{\frac{d+1}{2}} \epsilon \right)^{\alpha} \right) & \text{if $\alpha \in (1,2]$}.
        \end{cases}
\]
The asymptotic relation between $n$ and $t_n$ implies that
\[
C_1 \exp \left( - C_2 \left( n\, t_n^{\frac{d+1}{2}} \epsilon \right)^{\alpha} \right)\to 0 \qquad \text{and} \qquad  C_1 \exp \left( - C_2 \left( \sqrt{n}\, t_n^{\frac{d+1}{2}} \epsilon \right)^{\alpha} \right) \to 0 \qquad \text{as $n \to +\infty$}.
\]
This yields the desired result.

\subsection{Almost sure convergence} Let us now prove (ii) in Theorem \ref{th:3}. To this aim, recall that a sequence of real-valued random variables $\{Z_n\}$ completely converges to another real-valued random variable $Z$ if for any $\epsilon>0$,
    \[
    \sum_{n} \mathbb{P}(|Z_n - Z| \ge \epsilon) < +\infty.
    \]
It is easily seen from the first Borel--Cantelli lemma that complete convergence implies almost sure convergence. We shall need an elementary lemma.

\begin{lemma}\label{lem:elem}
    Let $\{b_n\}$ be a sequence of positive real numbers such that
    \[
    \frac{b_n}{\ln(n)} \stackrel{n\to +\infty}{\longrightarrow} +\infty \quad \text{and} \quad \sum_{n\ge 1} e^{-b_n} < +\infty.
    \]
    Then for any $\delta \in (0,1)$,
    \[
    \sum_{n\ge 1} e^{-\delta b_n} < +\infty
    \]
\end{lemma}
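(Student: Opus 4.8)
The plan is to prove the lemma directly from the hypothesis $b_n/\ln(n)\to+\infty$, turning the super‑logarithmic growth of $b_n$ into polynomial decay of $e^{-\delta b_n}$ and then comparing with a convergent $p$-series. (The summability of $\sum_n e^{-b_n}$ will not actually be needed for the argument; in fact it is already implied by $b_n/\ln(n)\to+\infty$, which forces $e^{-b_n}\le n^{-2}$ for all large $n$.)

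First I would fix $\delta\in(0,1)$ and set the threshold $C\df 2/\delta>0$. By definition of $b_n/\ln(n)\to+\infty$, there exists an integer $N\ge 2$ such that $b_n\ge (2/\delta)\ln n$ for every $n\ge N$ (note $\ln n>0$ for $n\ge 2$). Multiplying by $\delta$ gives $\delta b_n\ge 2\ln n$, hence
\[
e^{-\delta b_n}\le e^{-2\ln n}=n^{-2}\qquad\text{for all }n\ge N.
\]

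Then I would split the series and estimate
\[
\sum_{n\ge 1}e^{-\delta b_n}=\sum_{n=1}^{N-1}e^{-\delta b_n}+\sum_{n\ge N}e^{-\delta b_n}\le \sum_{n=1}^{N-1}e^{-\delta b_n}+\sum_{n\ge N}n^{-2}<+\infty,
\]
the first sum being finite (finitely many finite terms) and the second being a tail of the convergent series $\sum n^{-2}$. This completes the proof.

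The main point — if one can call it an obstacle — is simply to pick the threshold $C$ strictly larger than $1/\delta$, so that $\delta C>1$ and the bound $e^{-\delta b_n}\le n^{-\delta C}$ becomes summable; the explicit choice $C=2/\delta$ works uniformly in $\delta\in(0,1)$. Everything else is routine, and in particular the hypothesis $\sum_n e^{-b_n}<+\infty$ plays no role beyond being automatically satisfied once $b_n/\ln(n)\to+\infty$.
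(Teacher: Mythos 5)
Your proof is correct and follows essentially the same route as the paper: both arguments use only the hypothesis $b_n/\ln(n)\to+\infty$ to bound $e^{-\delta b_n}\le n^{-\delta c}$ for a threshold $c>1/\delta$ (you take $c=2/\delta$ explicitly) and conclude by comparison with a convergent $p$-series. Your remark that the hypothesis $\sum_n e^{-b_n}<+\infty$ is never used, being implied by the growth condition, is accurate — the paper's proof does not use it either.
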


\begin{proof}
Take $\delta \in (0,1)$. For any $c>0$ there exists $N \in \mathbb{N}$ such that for any $n \ge N$,
\[
\frac{b_n}{\ln(n)} \ge c.
\]
For these integers $n$,
\[
e^{-\delta b_n} \le e^{-\delta c \ln(n)} = \frac{1}{n^{\delta c}} \, \cdot 
\]
Therefore, if $c > 1/\delta$,
\[
\sum e^{-\delta b_n} \le \sum \frac{1}{n^{\delta c}} < +\infty.
\]
\end{proof}

We can now prove (ii) in Theorem \ref{th:3}. Consider $t_n \to 0$ such that $\left(\sqrt{n}\, t_n^{\frac{d+1}{2} } \right)^{\alpha}/\ln(n)\to \infty$ as $n \to \infty$. For any $\epsilon >0$,
\begin{align*}
\sum_{n} \mathbb{P} \left( \left| \sqrt{t_n} L_{n,t_n} f(x) - \mathcal{D} f(x) \right| \ge \epsilon \right) & \le  C_1 \sum_{n}  \exp \left( - C_2 \left( n\, t_n^{\frac{d+1}{2}} \epsilon \right)^{\alpha} \right)\\
& =  C_1 \sum_{n}  \exp \left( - b_n\epsilon^\alpha \right)
\end{align*}
with $b_n \df C_2 \left( n\, t_n^{\frac{d+1}{2} } \right)^{\alpha}$. The assumption on $\{t_n\}$ implies that $b_n/\ln(n) \to \infty$ as $n\to \infty$, hence Lemma \ref{lem:elem} yields that 
\[
\sum_{n} \mathbb{P} \left( \left| \sqrt{t_n}L_{n,t_n} f(x) - \mathcal{D} f(x) \right| \ge \epsilon \right) \le \sum_{n}  \exp \left( - b_n\epsilon^\alpha \right) < +\infty.
\]
Thus $|\sqrt{t_n}L_{n,t_n}f(x) - \mathcal{D}f(x)|$ completely converges to $0$, thus it converges almost surely.

\section{Numerical simulations}\label{sec:numerics}

In this section, we empirically compare $L_{n,t}$ and $L_t$ and their scaled versions $\sqrt{t} L_{n,t}$ and $\sqrt{t} L_t$ on three models of submanifolds with kinks : the three-dimensional unit ball, which is a manifold with boundary, the three-dimensional unit cube, which is a manifold with corners, and a two-dimensional cusp, which is a manifold with kinks that is not a manifold with corners.

 \subsection{Experiments for a 3D ball}


We consider the ball $\{(x,y,z) : x^2 +y^2 + z^2 \le 1\}$ and compute the values of $L_{n,t}$, $L_t$, $\sqrt{t} L_{n,t}$ and $\sqrt{t} L_t$ at $n:=10^8$ uniformly generated sample points. We let the kernel bandwidths \(t:=t_n\) vary logarithmically from \(0.05\) down to \(0.01\) (20 values). We consider the function
\[
f(x,y,z) = x+y+z.
\]
Note that this function is harmonic on $\setR^3$, i.e.~its Laplacian is constantly equal to zero.

 Theorem \ref{th:3} gives us a condition on the convergence in probability of $L_{n,t_n}$ for $n\to \infty, t_n\to 0,$ which is $\sqrt{n}t_n^{(d+1)/2} = \sqrt{n}t_n^{2} \to \infty.$ In our experiments, the values of $\sqrt{n}t_n^{2}$ vary between $\sqrt{10^8}\times (0.01)^{2}=1$ and $\sqrt{10^8}\times (0.05)^{2}=25.$ These values are not large, but one can already see with them that the behavior of $L_{n,{t_n}}$ matches up with the asymptotics provided by our theory.

The tables below show how the graph Laplace operator $L_{n,t}f$ and its expectation $L_tf$ behave asymptotically on the unit ball $\mathbb{B}^3$. Note that 'int' corresponds to an interior point, namely the origin $0_3$, and 'bd' corresponds to the boundary point $(1,0,0)$.\\ 

\noindent
\begin{minipage}{0.5\textwidth}
\centering
\captionof{table}{Discrete Laplacians}
\begin{adjustbox}{max width=\linewidth}
\begin{tabular}{rrrrr}
\toprule
$t$ & $L_{n,t}$ int & $L_{n,t}$ bd & $\sqrt{t} L_{n,t}$ int & $\sqrt{t} L_{n,t}$ bd \\
\midrule
0.050000 & -0.000183 & 1.634825 & -0.000041 & 0.365558 \\
0.047895 & -0.000193 & 1.672184 & -0.000042 & 0.365955 \\
0.045789 & -0.000201 & 1.712053 & -0.000043 & 0.366353 \\
0.043684 & -0.000204 & 1.754732 & -0.000043 & 0.366752 \\
0.041579 & -0.000200 & 1.800571 & -0.000041 & 0.367153 \\
0.039474 & -0.000185 & 1.849985 & -0.000037 & 0.367555 \\
0.037368 & -0.000154 & 1.903471 & -0.000030 & 0.367958 \\
0.035263 & -0.000101 & 1.961627 & -0.000019 & 0.368364 \\
0.033158 & -0.000015 & 2.025183 & -0.000003 & 0.368772 \\
0.031053 & 0.000117 & 2.095039 & 0.000021 & 0.369183 \\
0.028947 & 0.000312 & 2.172321 & 0.000053 & 0.369597 \\
0.026842 & 0.000594 & 2.258457 & 0.000097 & 0.370016 \\
0.024737 & 0.000996 & 2.355294 & 0.000157 & 0.370439 \\
0.022632 & 0.001563 & 2.465269 & 0.000235 & 0.370870 \\
0.020526 & 0.002355 & 2.591675 & 0.000337 & 0.371309 \\
0.018421 & 0.003461 & 2.739100 & 0.000470 & 0.371762 \\
0.016316 & 0.005012 & 2.914158 & 0.000640 & 0.372235 \\
0.014211 & 0.007220 & 3.126828 & 0.000861 & 0.372743 \\
0.012105 & 0.010473 & 3.393033 & 0.001152 & 0.373315 \\
0.010000 & 0.015566 & 3.740139 & 0.001557 & 0.374014 \\
\bottomrule
\end{tabular}
\end{adjustbox}
\end{minipage}%
\hfill
\begin{minipage}{0.5\textwidth}
\centering
\captionof{table}{Continuous Laplacians}
\begin{adjustbox}{max width=\linewidth}
\begin{tabular}{rrrr}
\toprule
$L_t$ int & $L_t$ bd & $\sqrt{t} L_t$ int & $\sqrt{t} L_t$ bd \\
\midrule
-7.5826e-16 & 1.638128 & -1.6955e-16 & 0.366297 \\
-1.0393e-15 & 1.675388 & -2.2746e-16 & 0.366657 \\
-9.1152e-16 & 1.715124 & -1.9505e-16 & 0.367010 \\
-1.4726e-15 & 1.757625 & -3.0779e-16 & 0.367357 \\
-1.3166e-15 & 1.803230 & -2.6847e-16 & 0.367695 \\
-1.0559e-15 & 1.852339 & -2.0979e-16 & 0.368022 \\
-1.2042e-15 & 1.905428 & -2.3278e-16 & 0.368337 \\
-1.5811e-15 & 1.963070 & -2.9690e-16 & 0.368635 \\
-2.0095e-15 & 2.025957 & -3.6591e-16 & 0.368913 \\
-1.8371e-15 & 2.094937 & -3.2373e-16 & 0.369165 \\
-2.3320e-15 & 2.171064 & -3.9676e-16 & 0.369383 \\
-2.4203e-15 & 2.255662 & -3.9653e-16 & 0.369558 \\
-2.7905e-15 & 2.350426 & -4.3889e-16 & 0.369674 \\
-2.8125e-15 & 2.457565 & -4.2311e-16 & 0.369711 \\
-3.7982e-15 & 2.580030 & -5.4417e-16 & 0.369641 \\
-4.1979e-15 & 2.721868 & -5.6975e-16 & 0.369423 \\
-4.9808e-15 & 2.888827 & -6.3622e-16 & 0.368999 \\
-5.8463e-15 & 3.089426 & -6.9692e-16 & 0.368284 \\
-7.7553e-15 & 3.336974 & -8.5326e-16 & 0.367147 \\
-1.1482e-14 & 3.653681 & -1.1482e-15 & 0.365368 \\
\bottomrule
\end{tabular}
\end{adjustbox}
\end{minipage}

\hfill \\

Below are the plots for the above tables.

\begin{figure}[htbp]
  \centering
  \begin{subfigure}[t]{0.48\textwidth}
    \includegraphics[width=\linewidth]{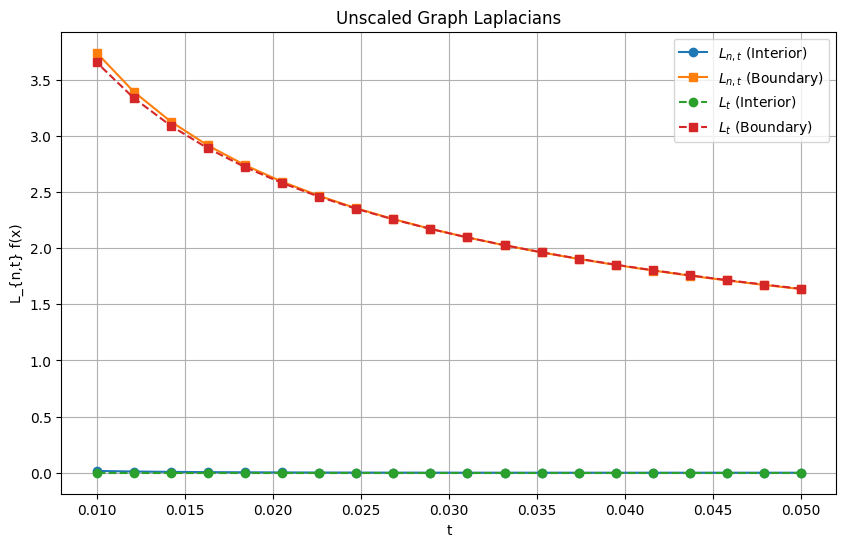}
    \caption{Unscaled graph Laplacian  – 3D ball}
  \end{subfigure}
  \hfill
  \begin{subfigure}[t]{0.48\textwidth}
    \includegraphics[width=\linewidth]{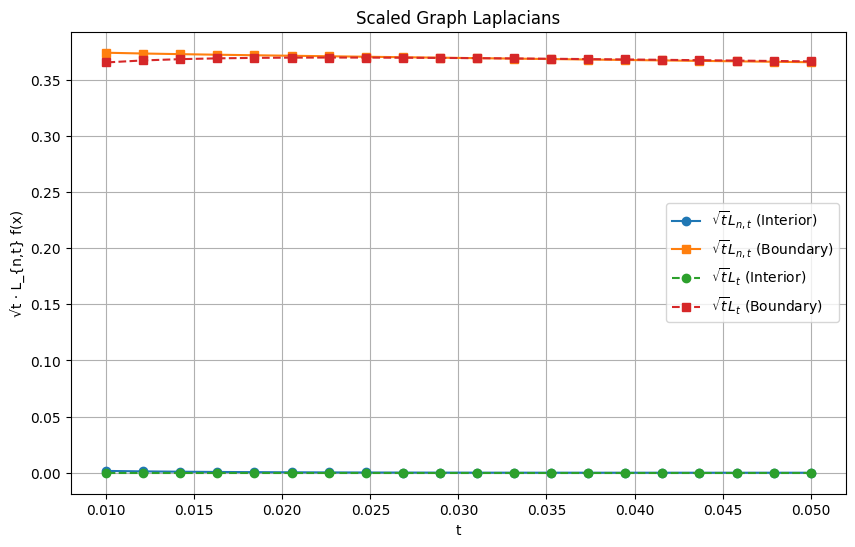}
    \caption{Scaled graph Laplacian – 3D ball}
  \end{subfigure}
  \caption{Comparison of graph Laplacians on a 3D ball}
\end{figure}

\subsection{Experiments for a 3D cube}

We consider the unit hypercube \([0,1]^3\) with vertices 
\[
a = (0,0,0), \qquad b = (1,0,0), \qquad c = (0,1,0), \qquad d = (0,0,1),
\]
\[
e = (1,1,0), \qquad f = (1,0,1), \qquad g = (0,1,1), \qquad h = (1,1,1),
\]
and the points $I,F,E,V$ defined below : 
\[
\begin{array}{rll}
I & \;=\; (a+b+c+\dots+h)/8 & \text{(interior point),}\\
F & \;=\;\displaystyle (a+b+d+e)/4 & \text{(face midpoint),} \\
E & \;=\;\displaystyle (a+b)/2 & \text{(edge midpoint),}\\
V & \; =\; a & \text{(vertex).} 
\end{array}
\]


We use the same bandwidths and the same function as in the previous section. Our numerical results are presented in Table \ref{table3Dhypercube} and Figure \ref{figure3Dhypercube}. \\

\vspace{-7mm}

\begin{table}[H]
\centering\scriptsize
\vspace{-0.5\baselineskip}

\begin{subtable}[t]{0.48\textwidth}
\centering
\caption{Interior point $I$}
\begin{tabular}{@{}crrrr@{}}
\toprule
$t$ & $L_{n,t}$ & $L_t$ & $\sqrt t\,L_{n,t}$ & $\sqrt t\,L_t$ \\
\midrule
5.000e-02 & -4.708e-16 & -4.708e-16 & -1.053e-16 & -1.053e-16 \\
3.737e-02 & -3.290e-15 & -3.290e-15 & -6.360e-16 & -6.360e-16 \\
2.474e-02 & -6.425e-15 & -6.425e-15 & -1.010e-15 & -1.010e-15 \\
1.421e-02 & -1.760e-14 & -1.760e-14 & -2.098e-15 & -2.098e-15 \\
1.000e-02 & -2.116e-14 & -2.116e-14 & -2.116e-15 & -2.116e-15 \\
\bottomrule
\end{tabular}
\end{subtable}
\hfill
\begin{subtable}[t]{0.48\textwidth}
\centering
\caption{Face midpoint $F$}
\begin{tabular}{@{}crrrr@{}}
\toprule
$t$ & $L_{n,t}$ & $L_t$ & $\sqrt t\,L_{n,t}$ & $\sqrt t\,L_t$ \\
\midrule
5.000e-02 & 7.002e+00 & 7.016e+00 & 1.566e+00 & 1.569e+00 \\
3.737e-02 & 8.120e+00 & 8.142e+00 & 1.570e+00 & 1.574e+00 \\
2.474e-02 & 9.985e+00 & 1.002e+01 & 1.570e+00 & 1.577e+00 \\
1.421e-02 & 1.318e+01 & 1.326e+01 & 1.571e+00 & 1.581e+00 \\
1.000e-02 & 1.573e+01 & 1.586e+01 & 1.573e+00 & 1.586e+00 \\
\bottomrule
\end{tabular}
\end{subtable}

\vspace{0.5\baselineskip}

\begin{subtable}[t]{0.48\textwidth}
\centering
\caption{Edge midpoint $E$}
\begin{tabular}{@{}crrrr@{}}
\toprule
$t$ & $L_{n,t}$ & $L_t$ & $\sqrt t\,L_{n,t}$ & $\sqrt t\,L_t$ \\
\midrule
5.000e-02 & -7.012e+00 & -7.027e+00 & -1.568e+00 & -1.571e+00 \\
3.737e-02 & -8.121e+00 & -8.144e+00 & -1.570e+00 & -1.574e+00 \\
2.474e-02 & -9.984e+00 & -1.002e+01 & -1.570e+00 & -1.577e+00 \\
1.421e-02 & -1.317e+01 & -1.326e+01 & -1.570e+00 & -1.581e+00 \\
1.000e-02 & -1.570e+01 & -1.586e+01 & -1.570e+00 & -1.586e+00 \\
\bottomrule
\end{tabular}
\end{subtable}
\hfill
\begin{subtable}[t]{0.48\textwidth}
\centering
\caption{Vertex $V$}
\begin{tabular}{@{}crrrr@{}}
\toprule
$t$ & $L_{n,t}$ & $L_t$ & $\sqrt t\,L_{n,t}$ & $\sqrt t\,L_t$ \\
\midrule
5.000e-02 & -5.264e+00 & -5.278e+00 & -1.177e+00 & -1.180e+00 \\
3.737e-02 & -6.088e+00 & -6.110e+00 & -1.177e+00 & -1.181e+00 \\
2.474e-02 & -7.450e+00 & -7.519e+00 & -1.177e+00 & -1.183e+00 \\
1.421e-02 & -9.873e+00 & -9.948e+00 & -1.177e+00 & -1.186e+00 \\
1.000e-02 & -1.177e+01 & -1.189e+01 & -1.177e+00 & -1.189e+00 \\
\bottomrule
\end{tabular}
\end{subtable}

\caption{Numerical values of the graph Laplacian operators at various points.}
\label{table3Dhypercube}
\end{table}

\vspace{-5mm}

\begin{figure}[H]
  \centering
  \begin{subfigure}{0.48\linewidth}\centering
    \includegraphics[width=\linewidth]{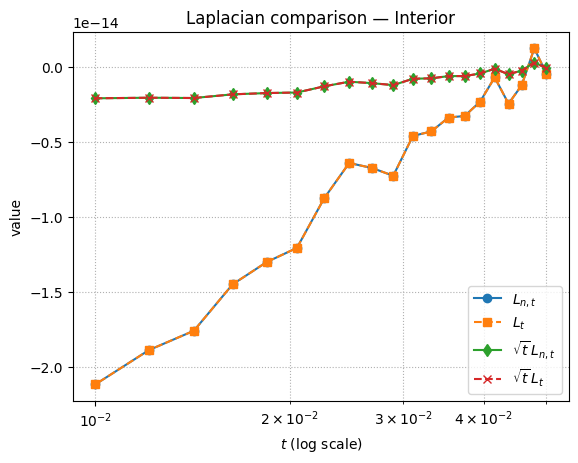}
    \caption{Interior}
  \end{subfigure}%
  \hfill
  \begin{subfigure}{0.48\linewidth}\centering
    \includegraphics[width=\linewidth]{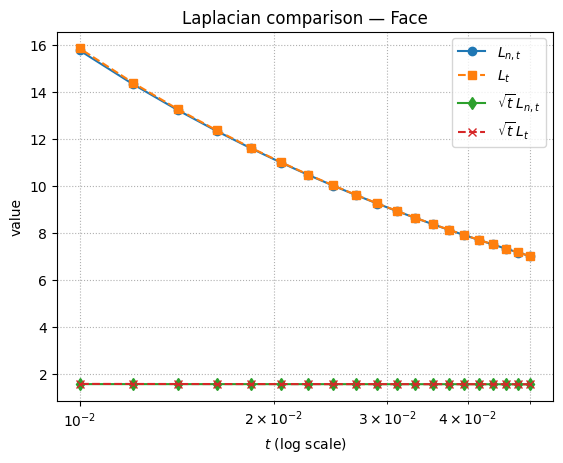}
    \caption{Face midpoint}
  \end{subfigure}

  \vspace{1em}

  \begin{subfigure}{0.48\linewidth}\centering
    \includegraphics[width=\linewidth]{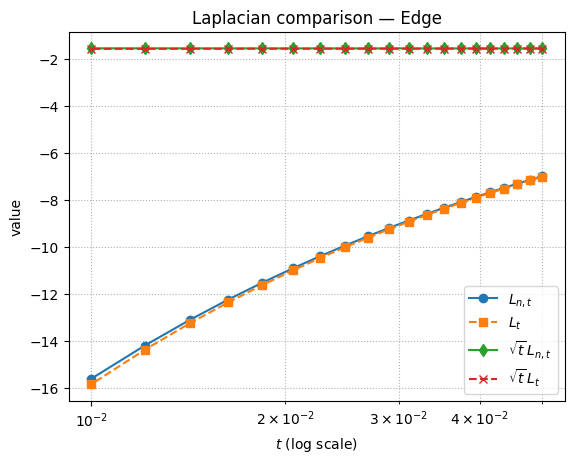}
    \caption{Edge midpoint}
  \end{subfigure}%
  \hfill
  \begin{subfigure}{0.48\linewidth}\centering
    \includegraphics[width=\linewidth]{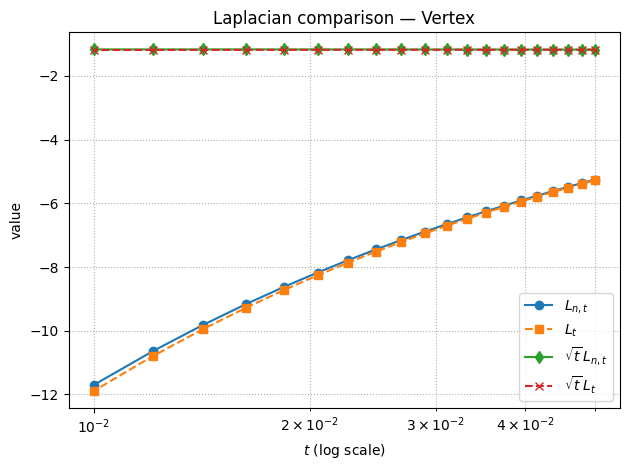}
    \caption{Vertex}
  \end{subfigure}
  
  \caption{Comparaison of graph Laplacians on a 3D hypercube}
  \label{figure3Dhypercube}
\end{figure}

\subsection{Experiments for a 2D cusp} 

In this experiment, we take sample $n:=10^8$ points on a cusp region $\{(x,y): y\le x^2\},$ which is a two dimensional ($d:=2$) manifold with an essential kink at $0_2$ that do not come from a uniform distribution, unlike in the previous examples. Then we take $20$ equispaced bandwidth ($t$) values in the range $[0.001, 0.01],$ find, tabulate and graph the values of discrete graph Laplacian $L_{n,t}f(0_2).$ We choose the harmonic function
\[
f(x,y)=x+y.
\]
Below are the table and the graph. We notice that the minimum value of $\sqrt{n}{t_n}^{(d+1)/2}=\sqrt{10^8}({0.001})^{3/2}=10^{-1/2},$ which is indeed pretty small (so this means for large $n$ it will be bigger, as required by Theorem \ref{th:3}), and \textit{yet} we notice the expected asymptotic behavior of $L_{n,t}f(0_2)$ given by Theorem \ref{th:3}, namely its value is very small for small $t$ since $f$ is harmonic and the Bouligand tangent cone/inward sector of the cusp at $0_2$ is one dimensional and hence has Lebesgue measure zero, so both $L_tf(0_2), L_{n,t}f(0_2)$ should approach zero for the pairs $(n,t_n)$ given by Theorem \ref{th:3}. The plot and the table are below:
\vspace{5mm}
\begin{figure}[H]
    \centering
    \begin{minipage}[t]{0.49\textwidth}
        \centering
        \vspace*{-1.5\baselineskip}  
        \includegraphics[width=\textwidth]{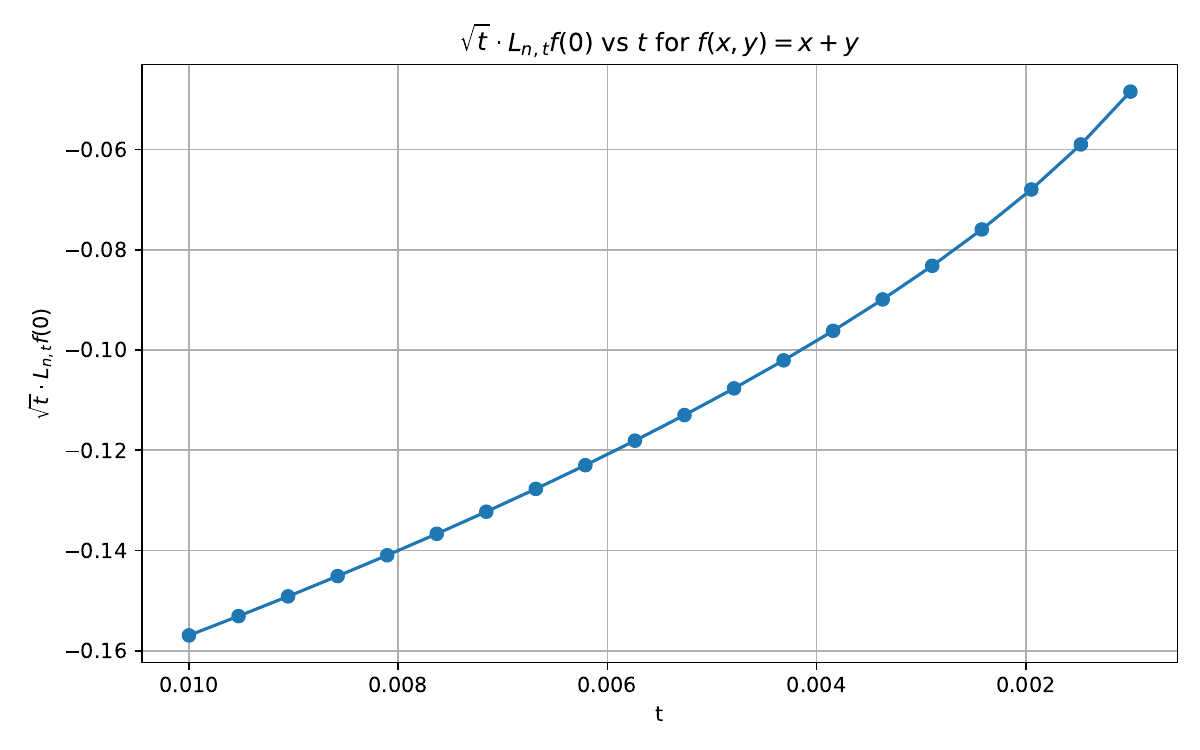}
        \captionof{figure}{Plot of $\sqrt{t} \cdot L_{n,t}f(0)$ vs $t$ for $f(x, y) = x + y$.}
        \label{fig:laplacian-plot}
    \end{minipage}%
    \hfill
    \begin{minipage}[t]{0.49\textwidth}
        \centering
        \captionof{table}{Values of $\sqrt{t} \cdot L_{n,t}f(0)$ for various $t$.}
        \label{tab:laplacian-table}
        {\footnotesize
        \renewcommand{\arraystretch}{0.85}%
        \begin{tabular}{r r}
\toprule
t & $\sqrt{t} L_{n,t}f(0)$ \\
\midrule
0.0100 & -1.569476e-01 \\
0.0095 & -1.531040e-01 \\
0.0091 & -1.491638e-01 \\
0.0086 & -1.451197e-01 \\
0.0081 & -1.409632e-01 \\
0.0076 & -1.366848e-01 \\
0.0072 & -1.322733e-01 \\
0.0067 & -1.277157e-01 \\
0.0062 & -1.229968e-01 \\
0.0057 & -1.180983e-01 \\
0.0053 & -1.129983e-01 \\
0.0048 & -1.076696e-01 \\
0.0043 & -1.020786e-01 \\
0.0038 & -9.618175e-02 \\
0.0034 & -8.992210e-02 \\
0.0029 & -8.322148e-02 \\
0.0024 & -7.596797e-02 \\
0.0019 & -6.799105e-02 \\
0.0015 & -5.900677e-02 \\
0.0010 & -4.846162e-02 \\
\bottomrule
\end{tabular}

        }
    \end{minipage}
\end{figure}


\section{Acknowledgments.} The authors are funded by the Research Foundation – Flanders (FWO) via the Odysseus II programme no.~G0DBZ23N. The first author has also been partially supported by MBZUAI SU Fund for this work. Both authors thank Dominic Joyce for helpful answers concerning manifolds with corners, Iosif Pinelis who suggested the proof of \eqref{eq:epi}, and Laurent Bessières for a comment on the first version of the paper.

\bibliographystyle{unsrt}
\bibliography{biblio}

\end{document}